\numberwithin{equation}{section}
\newcommand{\ensemblenombre}[1]{\mathbb{#1}}
\newcommand{\N}{\ensemblenombre{N}}
\newcommand{\R}{} 
\renewcommand{\R}{\ensemblenombre{R}}
\renewcommand{\leq}{\leqslant}
\renewcommand{\geq}{\geqslant}
\newcommand{\abs}[1]{\left\lvert#1\right\rvert}
\newcommand{\norme}[1]{\left\| #1\right\|}
\newcommand{\pare}[1]{\left( #1 \right)}
\renewcommand{\leq}{\leqslant}
\renewcommand{\geq}{\geqslant}
\theoremstyle{plain} 
\newtheorem{Proposition}{Proposition}[section] 
\newtheorem{Theorem}[Proposition]{Theorem}
\newtheorem{Lemma}[Proposition]{Lemma}
\newtheorem{Corollary}[Proposition]{Corollary}
\theoremstyle{definition}
\newtheorem{Definition}[Proposition]{Definition}
\newtheorem{Remark}[Proposition]{Remark}
\begin{document}

\title{Null-controllability of cascade reaction-diffusion systems with odd coupling terms}
\author[1]{Kévin Le Balc'h}
\author[2]{Takéo Takahashi}
\affil[1]{Sorbonne Université, CNRS, Inria, Laboratoire Jacques-Louis Lions, Paris, France}
\affil[2]{Universit\'e de Lorraine, CNRS, Inria, IECL, F-54000 Nancy, France}

\maketitle
\begin{abstract}
In this paper, we consider a nonlinear system of two parabolic equations, with a distributed control in the first equation and an odd coupling term in the second one. 
We prove that the nonlinear system is small-time locally null-controllable. The main difficulty is that the linearized system is not null-controllable. To overcome this obstacle, we extend in a nonlinear setting the strategy introduced in \cite{KLB} that consists in constructing odd controls for the linear heat equation. The proof relies on three main steps. First, we obtain from the classical $L^2$ parabolic Carleman estimate, conjugated with maximal regularity results, a weighted $L^p$ observability inequality for the 
nonhomogeneous heat equation. Secondly, we perform a duality argument, close to the well-known Hilbert Uniqueness Method in a reflexive Banach setting, to prove that the heat equation perturbed by a source term is null-controllable thanks to odd controls. Finally, the nonlinearity is handled with a Schauder fixed-point argument.
\end{abstract}

\vspace{1cm}

\noindent {\bf Keywords:} Null-controllability, parabolic system, nonlinear coupling, Carleman estimate

\noindent {\bf 2020 Mathematics Subject Classification.}  35K45, 35K58, 93B05, 93C10

\tableofcontents

\section{Introduction}
Let $T>0$ be a positive time, $d \in \mathbb{N}^{*}$, $\Omega$ be a bounded, connected, open subset of $\mathbb{R}^{d}$ of class $C^2$
corresponding to the spatial domain and 
$\omega$ be a nonempty open subset such that $\overline{\omega}\subset \Omega$. In what follows, we use the notation $1_{\omega}$ for the characteristic function of $\omega$.

The null-controllability of the heat equation described below was first obtained by
Fattorini and Russell \cite{MR335014} for $d=1$ and by Lebeau, Robbiano \cite{LR95} and Fursikov, Imanuvilov \cite{FI}  for $d\geq 1$.
More precisely for any $y_0 \in L^2(\Omega)$, there exists $h \in L^{2}((0,T)\times\omega)$ such that the solution $y$ of the system
\begin{equation}
\label{eq:heatcontrol}
\left\{
\begin{array}{ll}
\partial_t y -  \Delta y =h 1_{\omega} &\mathrm{in}\ (0,T)\times\Omega,\\
y= 0&\mathrm{on}\ (0,T)\times\partial\Omega,\\
y(0,\cdot)=y_0& \mathrm{in}\ \Omega,
\end{array}
\right.
\end{equation}
satisfies $y(T,\cdot) = 0$. These results were then extended to a large number of other parabolic systems, linear or nonlinear. For instance, 
the null-controllability of linear coupled parabolic systems has been a challenging issue for the control community in the last two decades. In that direction, we can quote,
among the large literature devoted to this problem, \cite{AKBDGB09}, where Ammar-Khodja, Benabadallah, Dupaix, Gonzalez-Burgos exhibit sharp conditions for the null-controllability of systems of the form
\begin{equation}
\label{eq:Paraboliccontrol}
\left\{
\begin{array}{ll}
\partial_t Y -  D \Delta Y = A Y + B h 1_{\omega} &\mathrm{in}\ (0,T)\times\Omega,\\
Y= 0&\mathrm{on}\ (0,T)\times\partial\Omega,\\
Y(0,\cdot)=Y_0& \mathrm{in}\ \Omega.
\end{array}
\right.
\end{equation}
Here, at time $t \in (0,T]$, $Y(t,.): \Omega \rightarrow \R^n$ is the state, $h=h(t,.) : \Omega \rightarrow \R^m$ is the control, $D:=\operatorname{diag}(d_1,\dots,d_n)$ with $d_i \in (0,+\infty)$ is the \textit{diffusion matrix}, $A \in \R^{n \times n}$ is the \textit{coupling matrix} and $B \in \R^{n \times m}$ represents the \textit{distribution of controls}.  
One objective is to reduce the number of controls $m$ (and in particular to have $m <n$) by using the coupling matrices $A$ and $B$. Let us also quote
the survey \cite{AKBGBdT11} for other results and open problems in that direction.

In this article, we consider the following controlled semi-linear reaction-diffusion system
\begin{equation}
\label{eq:ReactionDiffusionSL}
\left\{
\begin{array}{ll}
\partial_t y_1 - d_1 \Delta y_1 =a_{11 }y_1^{N_1} + h 1_{\omega} &\mathrm{in}\ (0,T)\times\Omega,\\
\partial_t y_2 - d_2 \Delta y_2 =a_{21} y_1^{N_2}+a_{22} y_2^{N_3} &\mathrm{in}\ (0,T)\times\Omega,\\
y_1=y_2= 0&\mathrm{on}\ (0,T)\times\partial\Omega,\\
y_1(0,\cdot)=y_{1,0}, \quad y_2(0,\cdot)=y_{2,0}& \mathrm{in}\ \Omega,
\end{array}
\right.
\end{equation}
where $d_1,d_2 \in (0, +\infty)$, $N_1,N_2,N_3\in \mathbb{N}^*$ and $a_{ij}\in \mathbb{R}$. In \eqref{eq:ReactionDiffusionSL}, at time $t \in [0,T]$, $(y_1,y_2)(t,\cdot) : \Omega \to \R^2$ is the state while $h(t,\cdot) : \omega \to \R$ is the control. We are interested in the null-controllability of \eqref{eq:ReactionDiffusionSL}, that is find a control $h=h(t,x)$, supported in $(0,T)\times \omega$, that steers the state $(y_1,y_2)$ to zero at time $T$, i.e. $(y_1,y_2)(T,\cdot) = 0$. Note that \eqref{eq:ReactionDiffusionSL} is a so-called “cascade system” because the first equation is decoupled from the second equation. For such a system, the basic idea is to use the \textit{nonlinear coupling} term  $a_{21} y_1^{N_2}$, as an indirect control term, that acts on the second component $y_2$.

\subsection{Main results}
Our control results on \eqref{eq:ReactionDiffusionSL} are written in the framework of weak solutions. More precisely, we define the Banach space
\begin{equation}\label{21:47}
\mathcal{W}:=L^2(0,T;H_0^1(\Omega)) \cap H^1(0,T;H^{-1}(\Omega)) \cap L^{\infty}((0,T)\times \Omega),
\end{equation}
and we consider solutions of \eqref{eq:ReactionDiffusionSL} such that $y_1,y_2\in \mathcal{W}$. The precise definition of the weak solutions of 
\eqref{eq:ReactionDiffusionSL} is given in \cref{D2} and a corresponding well-posedness result is stated in
\cref{Th:WPLinftySLReactionDiffusion} for controls $h \in  L^{p}((0,T)\times \omega)$ with 
\begin{equation}\label{09:42}
p\in \left(\frac{d+2}{2}, \infty\right], \quad \text{and} \quad p\geq 2 \ \text{if} \ d=1.
\end{equation}

%
%
Our first main result can be stated as follows.
\begin{Theorem}
\label{th:Mainresult1}
Let $p$ satisfies \eqref{09:42}. Assume
\begin{equation}
\label{eq:CNScontrollability}
a_{2,1} \neq 0, \quad N_2 \ \text{is odd}.
\end{equation}
Then there exists $\delta >0$ such that for any initial data satisfying
\begin{equation}
\label{eq:smalldatasystem}
\norme{y_{1,0}}_{L^{\infty}(\Omega)} + \norme{y_{2,0}}_{L^{\infty}(\Omega)} \leq \delta,
\end{equation}
there exists a control $h \in  L^{p}((0,T)\times \omega)$ satisfying 
\begin{equation}
\label{eq:smallcontrolsystem}
\norme{h}_{L^p((0,T)\times \omega)} \lesssim \delta,
\end{equation}
such that the solution $(y_1,y_2) \in \mathcal{W}\times \mathcal{W}$ of \eqref{eq:ReactionDiffusionSL} satisfies
\begin{equation}
\label{eq:NullStateu1u2}
(y_1,y_2)(T,\cdot) = 0.
\end{equation}
\end{Theorem}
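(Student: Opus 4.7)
The main difficulty, as the authors emphasize, is that linearizing system \eqref{eq:ReactionDiffusionSL} around $(0,0)$ annihilates the coupling $a_{21}y_1^{N_2}$ whenever $N_2\geq 2$ (the Jacobian of $y_1\mapsto y_1^{N_2}$ vanishes at the origin), so the linearized second equation becomes an uncoupled heat equation that cannot be null-controlled. The classical source-term inverse-mapping scheme à la Liu--Tucsnak is therefore unavailable, and the strategy must keep the nonlinear coupling alive throughout. The hypothesis that $N_2$ is odd is the algebraic feature that makes this viable: $y_1\mapsto y_1^{N_2}$ is an odd bijection of $\R$, so that the nonlinear source $a_{21}y_1^{N_2}$ can take arbitrary signs and thus act as an effective indirect control on the second equation.

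Following the three-step plan announced in the abstract, the first step is to combine the Fursikov--Imanuvilov $L^2$ Carleman estimate with $L^p$ parabolic maximal regularity to derive a weighted $L^p$ observability inequality for the inhomogeneous adjoint heat equation
\begin{equation*}
-\partial_t\varphi-d\Delta\varphi=F,\qquad \varphi(T,\cdot)=\varphi_T,
\end{equation*}
bounding a weighted $L^2$ norm of $\varphi(0,\cdot)$ by a weighted $L^{p'}$ norm of $\varphi|_{(0,T)\times\omega}$ plus a tame $L^{p'}$-type contribution from $F$. The range \eqref{09:42} is precisely the one ensuring $\mathcal{W}\hookrightarrow L^\infty((0,T)\times\Omega)$, which later accommodates the pointwise powers $y_i^{N_j}$. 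The second step dualizes this observability through a HUM-type argument formulated on the reflexive Banach space $L^{p'}$: one minimizes a strictly convex coercive functional whose Euler--Lagrange equation expresses the control as an odd $(p'-1)$-power of the adjoint state, which is the \emph{odd control} of the abstract, and obtains a continuous linear operator sending $(y_0,f)$ to a control $h\in L^p((0,T)\times\omega)$ that null-controls $\partial_t y-d_1\Delta y=f+h1_\omega$.

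The third step closes the argument by a Schauder fixed-point scheme on a ball of radius $C\delta$ in $\mathcal{W}\times\mathcal{W}$. To a candidate $(\bar y_1,\bar y_2)$ one associates $(y_1,y_2)$ by solving in parallel the linear nonhomogeneous problem
\begin{equation*}
\partial_t y_2 - d_2\Delta y_2 - a_{22}\bar y_2^{N_3-1}y_2 = a_{21}y_1^{N_2},\qquad y_2(0)=y_{2,0},
\end{equation*}
and the linear controllability problem of Step 2 for the first equation, linearized on $\bar y_1$, where the control $h$ is chosen so that simultaneously $y_1(T,\cdot)=0$ and the resulting $y_1^{N_2}$ drives $y_2(T,\cdot)$ to $0$. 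Continuous dependence follows from the $L^p$ estimates of Step 2 together with parabolic maximal regularity, compactness from Aubin--Lions, and Schauder's theorem delivers a fixed point whose $L^\infty$ norm is controlled by $\delta$, yielding both \eqref{eq:smallcontrolsystem} and \eqref{eq:NullStateu1u2}. The main obstacle will be the tight interplay between Steps 1 and 2: the Carleman weights must be calibrated so that the HUM produces a control $h$ whose $N_2$-th power is exactly the source required for the $y_2$-controllability problem, while simultaneously yielding $L^p$ bounds compatible with the Schauder self-map on the small ball.
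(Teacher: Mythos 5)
Your proposal correctly identifies the obstruction (the linearization at the origin decouples the second equation) and correctly assembles the three ingredients — weighted $L^p$ observability, a HUM-type minimization on a reflexive $L^p$ space producing controls that are odd powers of the adjoint state, and a Schauder fixed point. But those three steps constitute the proof of the \emph{auxiliary scalar} result (\Cref{th:mainresult2}, the odd-control theorem for a single semilinear heat equation), not of \Cref{th:Mainresult1} itself. The gap is in your final step: you posit a fixed-point map in which ``the control $h$ is chosen so that simultaneously $y_1(T,\cdot)=0$ and the resulting $y_1^{N_2}$ drives $y_2(T,\cdot)$ to $0$.'' That simultaneous requirement is precisely the statement to be proved, and you give no mechanism for meeting it; calibrating Carleman weights so that ``HUM produces a control $h$ whose $N_2$-th power is exactly the source required'' does not work, because what enters the $y_2$-equation is $y_1^{N_2}$, not a power of $h$, and $y_1$ is only indirectly determined by $h$ through the first PDE.

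The missing idea is an inversion combined with a time splitting. On $(0,T/2)$ one applies \Cref{th:mainresult2} to the first equation alone to get $y_1(T/2,\cdot)=0$, while $y_2$ evolves freely and, by the regularizing effect (\Cref{L20}), satisfies $\|y_2(T/2,\cdot)\|_{L^\infty(\Omega)}\lesssim\delta$. On $(T/2,T)$ one applies \Cref{th:mainresult2} to the \emph{second} equation with a fictitious control $H\chi_\omega$, where $\chi_\omega=\widetilde{\chi}_\omega^{N_2}$, obtaining $y_2(T,\cdot)=0$ together with the crucial extra properties \eqref{eq:oddcontrol}: $H^{1/N_2}\in\mathcal{X}^p$ and $H^{1/N_2}(T/2,\cdot)=H^{1/N_2}(T,\cdot)=0$. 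One then \emph{defines} $y_1:=(H\chi_\omega)^{1/N_2}$ on $(T/2,T)$ — legitimate because $N_2$ is odd, and gluing continuously with $y_1(T/2,\cdot)=0$ — so that $y_1^{N_2}=H\chi_\omega$ is exactly the source seen by $y_2$, and finally reads off $h:=\partial_t y_1-\Delta y_1-y_1^{N_1}\in L^p((T/2,T)\times\Omega)$ from the first equation. This is why \Cref{th:mainresult2} must deliver not merely a null-control but one whose odd root lies in $\mathcal{X}^p$ and vanishes at both endpoints; your proposal never exploits that regularity in the assembly, and without the inversion step the oddness of $N_2$ plays no operative role in your argument. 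The Schauder fixed point lives entirely inside the proof of \Cref{th:mainresult2}, not at the level of the coupled system.
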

Here and in all that follows, we use the notation $X\lesssim Y$ if there exists a constant $C>0$ such that we have the inequality $X\leq CY$. 
In the whole paper, we use $C$ as a generic positive constant that does not depend on the other terms of the inequality.
The value of the constant $C$ may change from one appearance to another. Our constants may depend on the geometry ($\Omega$, $\omega$), on the time $T$ and on the dimension $d$. If we want to emphasize the dependence on a quantity $k$, we write  $X\lesssim_k Y$.

As we will see, the smallness conditions on the initial data i.e. \eqref{eq:smalldatasystem} and on the control i.e. \eqref{eq:smallcontrolsystem} are sufficient conditions to guarantee the well-posedness of the system \eqref{eq:ReactionDiffusionSL}, see \Cref{Th:WPLinftySLReactionDiffusion} below.

Before continuing, let us make some comments related to \Cref{th:Mainresult1}.
\begin{itemize}
\item The sufficient condition \eqref{eq:CNScontrollability} ensuring the local null-controllability of \eqref{eq:ReactionDiffusionSL} is actually necessary. 
Indeed, if $a_{21} = 0$ then the second equation of \eqref{eq:ReactionDiffusionSL} is decoupled from the first equation so $y_2$ cannot be driven to $0$ at time $T$. Moreover, if $N_2$ is even, the strong maximum principle shows that we can not control $y_2$: assume for instance that $a_{21} \geq 0$, then
\begin{equation}
\label{eq:maxprinciplesecond}
\partial_t y_2 - d_2 \Delta y_2 - a_{22} y_2^{N_3} = a_{21} y_1^{N_2} \geq 0\ \text{in}\ (0,T)\times \Omega
\end{equation} 
and thus $\widetilde{y}_2(t,x):=y_2(t,x)e^{-\lambda t}$, with $\lambda\geq |a_{22}| \left\| y_2 \right\|_{L^{\infty}((0,T)\times \Omega)}^{N_3-1}$ satisfies 
$$
\partial_t \widetilde{y}_2 - d_2 \Delta \widetilde{y}_2+c\widetilde{y}_2 \geq 0\ \text{in}\ (0,T)\times \Omega
$$
with $c\geq 0$ and we can apply the standard strong maximum principle (see, for instance, \cite[Theorem 12, p.397]{Eva10}): if 
$y_{2,0} \geq 0$ and $y_{2,0} \neq 0$ then for all $t \in (0,T]$, $y_{2}(t, \cdot)  > 0$ in $\Omega$.
\item The linear case $$N_1 = N_2 = N_3 = 1,$$ is already treated in \cite{deT00} by de Teresa. To obtain such a result, the author shows a Carleman estimate and deduce from it an observability inequality for the adjoint system.
\item For the semi-linear case, the main idea is to linearize the system in order to use the previous result. However, 
if $N_2 \geq 2$, in the linearized system around the trajectory $((\overline{y_1}, \overline{y_2}), \overline{h}) = ((0,0),0)$, we can see that
the second equation 
is decoupled from the first one and thus can not be controlled; the linearized system is thus not null-controllable.

\item To overcome this difficulty, Coron, Guerrero, Rosier \cite{CGR10} use the return method in the case 
$$
N_2 = 3, \quad N_3=1.
$$ 
More precisely, they construct a reference trajectory $((\overline{y_1}, \overline{y_2}), \overline{h})$ of \eqref{eq:ReactionDiffusionSL} starting from $(\overline{y_1}, \overline{y_2})(0,\cdot) = 0$, reaching $(\overline{y_1}, \overline{y_2})(T,\cdot) = 0$ and satisfying $ |\overline{y_1}| \geq \varepsilon > 0$ in $(t_1,t_2)\times \omega$. Then they linearize \eqref{eq:ReactionDiffusionSL} around the reference trajectory and obtain for the second equation
\begin{equation}
\partial_t y_2 - d_2 \Delta y_2 = 3 a_{21} \overline{y_1}^2 y_1 +  a_{22}  y_2\ \text{in}\ (0,T)\times \Omega.
\end{equation}
They can then use \cite{deT00} to obtain that the null-controllability of the linearized system and then the local
null-controllability of the nonlinear system \eqref{eq:ReactionDiffusionSL} by a fixed-point argument.

\item In \cite{KLB}, the first author employs a new direct strategy in order to deal with the case 
$$
N_2 \ \text{odd and} \  N_3=1,
$$
that we adapt here to prove \Cref{th:Mainresult1} for the more general case
$$
N_2 \ \text{odd and} \  N_3\geq 1.
$$
We describe below the idea of the proof.
\end{itemize}

\textbf{Strategy of the proof.} 
We proceed in two steps: in the first step, we control the first equation of \eqref{eq:ReactionDiffusionSL} in the time interval $(0,T/2)$. Using the small-time 
local null-controllability of the semi-linear heat equation, there exists a control $h$ such that $y_1(T/2,\cdot)=0$. Using the smallness assumptions, we can ensure that the second equation of \eqref{eq:ReactionDiffusionSL} admits a solution on $(0,T/2)$. In the second step, we control this second equation 
thanks to a \textit{fictitious odd control} $H$. More precisely, we can consider the control problem
\begin{equation}
\label{eq:heatSL2}
\left\{
\begin{array}{rl}
\partial_t y_2 - \Delta y_2=H \chi_{\omega}+ y_2^{N_3} &\mathrm{in}\ (T/2,T)\times\Omega,\\
y_2= 0&\mathrm{on}\ (T/2,T)\times\partial\Omega,
\end{array}
\right.
\end{equation}
where $\chi_{\omega} = \widetilde{\chi}_{\omega}^{N_2}$ and where $\widetilde{\chi}_{\omega}\in C^\infty(\Omega)$ has a compact support in $\omega$, 
$\widetilde{\chi}_\omega \not\equiv 0$. We 
then need a control $H$ such that $y_2(T,\cdot)=0$, satisfying $H(T/2,\cdot) = H(T,\cdot)=0$ and such that $H^{1/N_2}$ is regular. Such a control is given by our second main result (\cref{th:mainresult2}) stated below. We can then set in $(T/2,T)$
$$
y_1:=\left(H\chi_{\omega}\right)^{1/N_2}, \quad h:=\partial_t y_1-c_1 \Delta y_1-y_1^{N_1}.
$$
By construction, $((y_1,y_2), h)$ is a trajectory of \eqref{eq:ReactionDiffusionSL} satisfying \eqref{eq:NullStateu1u2}.

To simplify the work and without loss of generality, we assume in what follows that 
$$
d_1=d_2=1, \quad
a_{11}=a_{21}=a_{22}=1, 
\quad 
N_1,N_2,N_3\geq 2.
$$

The proof of \Cref{th:Mainresult1} crucially relies on the construction of odd controls for the semi-linear heat equation that we present now. For $N \geq 2$, we thus consider the system
\begin{equation}
\label{eq:heatSL}
\left\{
\begin{array}{ll}
\partial_t y - \Delta y=h \chi_{\omega}+ y^{N} &\mathrm{in}\ (0,T)\times\Omega,\\
y= 0&\mathrm{on}\ (0,T)\times\partial\Omega,\\
y(0,\cdot)=y_{0}& \mathrm{in}\ \Omega.
\end{array}
\right.
\end{equation}
The definition of the weak solutions for the above system and a corresponding well-posedness result are given in \cref{D1}
and \cref{Th:WPLinftySL}.
Our second main result states as follows.
\begin{Theorem}
\label{th:mainresult2}
Assume that $N \geq 2$, $n \in \N$, and $p \geq 1$. There exists $\delta >0$ such that for every initial data $y_0 \in L^{\infty}(\Omega)$ such that
\begin{equation}
\label{eq:smally0}
\norme{y_0}_{L^{\infty}(\Omega)} \leq \delta,
\end{equation}
there exists a control $h \in L^{\infty}((0,T)\times \omega)$ satisfying
\begin{equation}
\label{eq:smallcontrolheatSL}
\norme{h}_{L^{\infty}((0,T)\times \omega)} \lesssim \norme{y_0}_{L^{\infty}(\Omega)},
\end{equation}
\begin{equation}
\label{eq:oddcontrol}
h^{1/(2n+1)}\in L^p(0,T;W^{2,p}(\Omega)) \cap W^{1,p}(0,T;L^p(\Omega)), \quad
h^{1/(2n+1)}(0,\cdot)=h^{1/(2n+1)}(T,\cdot)=0,
\end{equation}
and such that the solution $y \in \mathcal{W}$ of \eqref{eq:heatSL} satisfies
\begin{equation}
\label{eq:smallyheatSL}
\norme{y}_{\mathcal{W}} \lesssim \norme{y_0}_{L^{\infty}(\Omega)},
\end{equation}
and
\begin{equation}
\label{eq:nully}
y(T,\cdot) = 0.
\end{equation}
\end{Theorem}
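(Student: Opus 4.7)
The plan is to recast \eqref{eq:heatSL} as a fixed point problem in which the nonlinearity $y^N$ is frozen as a source term, and to solve the resulting \emph{linear} controllability problem with odd-root regularity by the three-step strategy advertised in the abstract (Carleman plus maximal regularity, HUM in a reflexive Banach setting, Schauder).

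First, I would reduce to a linear problem with source: given $\tilde y$ in a suitable ball of $\mathcal{W}$, produce a control $h=h(\tilde y)$ for
\begin{equation*}
\partial_t y - \Delta y = h \chi_{\omega} + \tilde y^{N} \ \text{in } (0,T)\times \Omega, \quad y=0 \text{ on } (0,T)\times\partial\Omega, \quad y(0,\cdot)=y_0,
\end{equation*}
steering $y$ to zero at time $T$, enjoying the odd regularity \eqref{eq:oddcontrol}, and obeying
\begin{equation*}
\norme{h}_{L^{\infty}((0,T)\times\omega)} + \norme{y}_{\mathcal{W}} \lesssim \norme{y_0}_{L^{\infty}(\Omega)} + \norme{\tilde y^{N}}_{Y}
\end{equation*}
in a suitably chosen source space $Y$. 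A fixed point of $\Lambda:\tilde y\mapsto y$ on the closed ball $\{\norme{\tilde y}_{\mathcal{W}} \leq K\delta\}$, with $\delta := \norme{y_0}_{L^{\infty}(\Omega)}$ and $K$ large but fixed, yields the desired solution: the ball is $\Lambda$-stable because $\norme{\tilde y^{N}}_{Y} \lesssim \norme{\tilde y}_{L^{\infty}}^{N} \leq (K\delta)^{N} \ll \delta$ once $\delta$ is small, and compactness of $\Lambda$ follows from an Aubin-Lions type argument in $\mathcal{W}$.

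Second, to solve the linear problem I would seek $h$ of the form $h = \theta^{2n+1} H_0$, where $\theta(t,x)$ is a Fursikov-Imanuvilov weight vanishing at $t=0$ and $t=T$ to arbitrary order, so that $h^{1/(2n+1)} = \theta\, H_0^{1/(2n+1)}$ automatically satisfies the endpoint vanishing in \eqref{eq:oddcontrol}. The dual variational problem is then a weighted $L^{p'}$-observability inequality for the adjoint heat equation
\begin{equation*}
-\partial_t \varphi - \Delta \varphi = 0 \text{ in } (0,T)\times\Omega, \quad \varphi=0 \text{ on } (0,T)\times\partial\Omega, \quad \varphi(T,\cdot)=\varphi_T.
\end{equation*}
HUM in this reflexive Banach setting (minimizing a strictly convex, coercive $L^{p'}$-type functional on the adjoint state space) then delivers a control whose representation $h^{1/(2n+1)} \sim \theta\,|\varphi|^{p'-2}\varphi$ inherits both the regularity and the endpoint vanishing of \eqref{eq:oddcontrol} directly from $\theta$.

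Third, the crucial weighted $L^{p'}$-observability is obtained, as in \cite{KLB}, by combining the classical $L^2$ Fursikov-Imanuvilov Carleman estimate for $\varphi$ with parabolic maximal regularity: one rescales $\psi := \theta_\star\varphi$ by a suitable weight so that $\psi$ solves a heat equation with right-hand side controlled by the Carleman weights, and then applies $L^p$ maximal regularity (Lamberton, Denk-Hieber-Pruss) to upgrade the $L^2$ Carleman bound to an $L^p(W^{2,p})\cap W^{1,p}(L^p)$ estimate on $\psi$. Sobolev embedding then produces both the weighted $L^{p'}$-observability and the $L^{\infty}$-bound \eqref{eq:smallcontrolheatSL}, while the source term $\tilde y^N$ is absorbed as an additional right-hand side whose dual pairing defines $\norme{\cdot}_Y$. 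The main obstacle is calibrating the weights $\theta,\theta_\star$ and the exponent $p$ so that (i) the dual norm produced by HUM matches exactly the odd-root space in \eqref{eq:oddcontrol}, (ii) the $L^{\infty}$ bound \eqref{eq:smallcontrolheatSL} drops out of Sobolev embedding, and (iii) $Y$ is weak enough that $\norme{\tilde y^{N}}_{Y} \lesssim \norme{\tilde y}_{\mathcal{W}}^{N}$, which is what makes the Schauder loop close by smallness. Once this bookkeeping is in place, the remaining bounds \eqref{eq:smallyheatSL} and the terminal condition \eqref{eq:nully} follow from the parabolic well-posedness of \cref{Th:WPLinftySL}.
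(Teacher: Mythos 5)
Your overall architecture (Carleman estimate $\to$ weighted $L^p$ observability via maximal regularity $\to$ duality/HUM in a reflexive Banach space $\to$ Schauder) is the same as the paper's, but two steps as you describe them would fail.

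First, the mechanism you propose for the odd-root regularity does not work. Writing $h=\theta^{2n+1}H_0$ with $\theta$ a vanishing weight only transfers the problem to $H_0$: nothing in your argument explains why $H_0^{1/(2n+1)}$ should lie in $\mathcal{X}^p$. Worse, the representation you extract from HUM, $h^{1/(2n+1)}\sim \theta\,|\varphi|^{p'-2}\varphi$, is a \emph{fractional} power of the adjoint state as soon as $p>2$ (since $p'-2<0$), so it inherits no $W^{2,p}$ regularity. The essential arithmetic idea of the paper is absent: one fixes $p=(2n+1)(2k+1)+1$ and minimizes the $L^p$-power functional $J$ so that the Euler--Lagrange equation yields $h=-\rho_0^{m_0p}\chi_\omega^{p-1}\overline{\zeta}^{\,p-1}$; then $h^{1/(p-1)}=-\rho_0^{p'm_0}\chi_\omega\overline{\zeta}$ is \emph{linear} in the weighted adjoint state, it belongs to $\mathcal{X}^p$ by the weighted maximal-regularity lemma, and $h^{1/(2n+1)}=\bigl(h^{1/(p-1)}\bigr)^{2k+1}$ is then an odd integer power, handled because $\mathcal{X}^p$ is an algebra for $p>(d+2)/2$. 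The endpoint vanishing in \eqref{eq:oddcontrol} comes from the extra factor $\rho_0^{p'm_0-m_1-\varepsilon}$ left over after absorbing $\rho_0^{m_1+\varepsilon}\overline{\zeta}$ into $\mathcal{X}^p\hookrightarrow C^0([0,T];W^{2/p',p})$, not from an ad hoc prefactor $\theta$.

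Second, your fixed point on an unweighted ball $\{\norme{\tilde y}_{\mathcal{W}}\le K\delta\}$ cannot close. The observability inequality carries weights $\rho^{m_0},\rho_0^{m_0}$ vanishing at $t=T$, so the dual pairing $\iint F\zeta$ in the functional $J$ is finite and controlled only if the source $F=\tilde y^N$ itself decays at $t=T$ at a rate tied to those weights (i.e.\ $F\in L^{p'}_{m_1}$, $F/\rho^{m_1}\in L^{p'}$); an element of $\mathcal{W}$ has no such decay, so for a generic $\tilde y$ in your ball the linear control problem with source $\tilde y^N$ is simply not solvable by this duality. The paper resolves this by iterating on the source $F$ in the weighted space $L^q_{m_1}(0,T;L^q(\Omega))$ and proving that the controlled state satisfies $y/\rho^{m_1/N}\in\mathcal{X}^q$, whence $y^N/\rho^{m_1}\in L^q$ again; this requires the quantitative calibration $m_1<r m_0$ with $r=p/(p-1+\tfrac1N)$ so that $m_1/N<m_0p-m_1(p-1)$. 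You flag the "bookkeeping" as an obstacle but do not supply the choice of weights that makes the loop self-consistent; without it the Schauder argument has no invariant set. (A further, minor, omission: for $L^\infty$ initial data one must first let the system evolve freely on $(0,T/2)$ to gain the $W^{2/q',q}_0$ regularity needed to start the weighted $\mathcal{X}^q$ estimates.)
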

As for \Cref{th:Mainresult1}, the smallness conditions \eqref{eq:smally0} and \eqref{eq:smallcontrolheatSL} are sufficient to guarantee the well-posedness of the semi-linear heat equation \eqref{eq:heatSL}, see \Cref{Th:WPLinftySL} below.

Before continuing, let us make some comments related to \Cref{th:mainresult2}.
\begin{itemize}
\item The crucial property in \Cref{th:mainresult2} is the odd behavior of the control, stated in \eqref{eq:oddcontrol}. Actually, the small-time local null-controllability of  \eqref{eq:heatSL} with controls in $L^{\infty}((0,T)\times \omega)$ is a consequence of \cite[Lemma 6]{AT02}.
\item For $N=1$, that is the linear case, the result of \Cref{th:mainresult2} is still true and has already been established by the first author, see \cite[Proposition 3.7]{KLB}. One can even obtain a (small-time) global null-controllability result with odd controls due to the linear setting.
Note that here, we extend the result of \cite{KLB} in the case of a linear heat equation with a source term, see \cref{sec-st}. 
\end{itemize}

\textbf{Strategy of the proof.} 
First, we use a classical Carleman estimate for the nonhomogeneous heat equation to obtain a
weighted $L^2$ observability inequality stated in \Cref{C1}. 
From this result and after that, we need to take care about the weights appearing in the norm of the adjoint system
they have to be ``comparable''. We then deduce from this result a weighted $L^p$ observability inequality, see \Cref{L4Obs} below with an arbitrary large $p$. 
As a consequence, a null-controllability result is obtained for the heat equation with a source term and with odd controls. 
Let us remark that taking $p$ large enough allows us to do only one bootstrap argument for getting the desired odd behavior for the control, see \Cref{P1} below. 
This is different from \cite[Theorem 4.4 and Proposition 4.9]{KLB} where two such arguments are used for obtaining the null-controllability of the heat equation with odd controls. 
Another bootstrap argument is then required in order to deal with the nonlinearity in the fixed-point argument, see \Cref{P7} below. Finally, a Schauder fixed-point argument, see \Cref{Sec:schauder}, is performed to obtain \cref{th:mainresult2}. 
We can remark that here due to our method for constructing the control, in this fixed point argument, the corresponding nonlinear mapping
is $\alpha$-H\"older continuous with $\alpha<1$. In particular, a Banach fixed point argument does not seem to apply.

\subsection{Outline of the paper}
The outline of the paper is as follows. In \Cref{sec:WPparabolicresults}, we recall some standard facts about well-posedness, regularity results for linear and nonlinear heat equations in various functional settings. We notably prove that \eqref{eq:heatSL} and \eqref{eq:ReactionDiffusionSL} are locally well-posed, see \Cref{Th:WPLinftySL} and \Cref{Th:WPLinftySLReactionDiffusion} below. \Cref{sec:NullControlHeatSlSmooth} and \Cref{sec:ProofMainresults} are devoted to the proofs of the main results, i.e. \Cref{th:Mainresult1} and \Cref{th:mainresult2}.

\section{Well-posedness and regularity results for the heat equation}
\label{sec:WPparabolicresults}
In this section, we give the notion of solutions that we consider in what follows. 
Then we recall standard well-posedness results for both linear and semi-linear heat equations in various functional settings we will use in what follows.
\subsection{Functional spaces}
In this article, we use in a crucial way a $L^p$ framework with $p\in (1,\infty)$. First, we introduce the standard notation 
for the dual exponent $p' \in (1, \infty)$ of $p$ defined by the relation 
$$\frac{1}{p}+\frac{1}{p'}=1.$$
We also introduce the following functional spaces
\begin{equation}\label{eq:defXp}
\mathcal{X}^p := L^p(0,T;W^{2,p}(\Omega)) \cap W^{1,p}(0,T;L^p(\Omega)).
\end{equation}
We have the following classical embedding results (see, for instance, \cite[Lemma 3.3, p.80]{LSU68}): for $p, q\geq 1$, 
\begin{equation}
\label{XpLq}
\mathcal{X}^p \hookrightarrow L^{q}(0,T;L^{q}(\Omega)) \quad \text{if} \quad \frac{1}{q}\geq \frac{1}{p}-\frac{2}{d+2},
\quad
\mathcal{X}^p \hookrightarrow L^{\infty}(0,T;L^{\infty}(\Omega)) \quad \text{if} \quad p > \frac{d+2}{2},
\end{equation}
\begin{equation}
\label{XpWq}
\mathcal{X}^p \hookrightarrow L^{q}(0,T;W^{1,q}(\Omega)) \quad \text{if} \quad \frac{1}{q}\geq \frac{1}{p}-\frac{1}{d+2},
\quad
\mathcal{X}^p \hookrightarrow L^{\infty}(0,T;W^{1,\infty}(\Omega)) \quad \text{if} \quad p > d+2.
\end{equation}
We also have, see for instance \cite[Lemma 3.4, p.82]{LSU68},
\begin{equation}
\label{eq:embeddingXpContinuousFrac}
\mathcal{X}^p \hookrightarrow C^0([0,T];W^{2/p',p}(\Omega)),
\end{equation}
where $W^{\alpha,p}(\Omega)$ denotes the fractional Sobolev spaces (see, for instance, \cite[p.70]{LSU68}).
We recall that functions in $W^{\alpha,p}(\Omega)$ admit a trace on $\partial\Omega$ if $\alpha>1/p$. 
If $\alpha>1/p$, we denote by $W_0^{\alpha,p}(\Omega)$ the subspace of functions $f\in W^{\alpha,p}(\Omega)$ such that $f=0$ on $\partial\Omega$.
We also write $W_0^{\alpha,p}(\Omega):=W^{\alpha,p}(\Omega)$ if $\alpha\leq 1/p$.
From \cite[Corollary 4.53, p.216]{Demengel}, we have 
$$
W^{2/p',p}(\Omega) \hookrightarrow L^\infty(\Omega)\quad \text{if} \quad p > \frac{d+2}{2},
$$
and thus
\begin{equation}
\label{10:06}
\mathcal{X}^p \hookrightarrow C^0([0,T];L^\infty(\Omega))\quad \text{if} \quad p > \frac{d+2}{2}.
\end{equation}

We finish with some other classical results on the spaces $\mathcal{X}^p$, for which we give a short proof for completeness.
\begin{Lemma}\label{L01}
The following statements hold.
\begin{enumerate}
\item If $p > \frac{d+2}{2}$, then $\mathcal{X}^p$ is an algebra.
\item For any $N>1$, $q>1$, if
\begin{equation}\label{11:44}
\frac{1}{q}\left(1-\frac{1}{N}\right)< \frac{2}{2+d}
\end{equation}
then the embedding 
\begin{equation}\label{eq:embeddingXqLnq}
\mathcal{X}^q {\hookrightarrow} L^{N q}((0,T)\times \Omega) \quad \text{is compact}.
\end{equation}
\end{enumerate}
\end{Lemma}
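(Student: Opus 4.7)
Both statements are standard consequences of the embeddings \eqref{XpLq}--\eqref{XpWq} combined respectively with the Leibniz rule and a standard Aubin--Lions/interpolation argument.

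For item (1), I would fix $u,v\in\mathcal{X}^p$ with $p>(d+2)/2$ and compute all the relevant derivatives of $uv$ via the Leibniz rule, in particular
\[
\partial_t(uv) = u\,\partial_t v + v\,\partial_t u,\qquad \nabla^2(uv) = u\,\nabla^2 v + v\,\nabla^2 u + 2\,\nabla u\otimes\nabla v.
\]
The second embedding in \eqref{XpLq} gives $u,v\in L^\infty((0,T)\times\Omega)$, so all the terms above involving at most one derivative on each factor fall into $L^\infty\cdot L^p\subset L^p((0,T)\times\Omega)$. Only the cross term $\nabla u\cdot\nabla v$ needs extra care; I would control it by using the second embedding of \eqref{XpWq} with $q=2p$, for which the admissibility condition $\frac{1}{2p}\ge\frac{1}{p}-\frac{1}{d+2}$ rewrites as $p\ge(d+2)/2$, hence yields $\nabla u,\nabla v\in L^{2p}((0,T)\times\Omega)$ and then $\nabla u\cdot\nabla v\in L^p((0,T)\times\Omega)$ by Cauchy--Schwarz. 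Collecting these estimates gives $uv\in\mathcal{X}^p$.

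For item (2), my strategy is Aubin--Lions followed by Lebesgue interpolation. Since $\Omega$ is bounded and of class $C^2$, Rellich--Kondrachov yields $W^{2,q}(\Omega)\hookrightarrow\hookrightarrow L^q(\Omega)$, and the Aubin--Lions lemma applied with the chain $W^{2,q}\hookrightarrow\hookrightarrow L^q\hookrightarrow L^q$ provides the compact embedding
\[
\mathcal{X}^q \hookrightarrow\hookrightarrow L^q((0,T)\times\Omega).
\]
Assumption \eqref{11:44} rewrites as $\frac{1}{Nq}>\frac{1}{q}-\frac{2}{d+2}$, so I may choose an exponent $r>Nq$ (possibly $r=\infty$ if $q>(d+2)/2$) with $\frac{1}{r}\ge\frac{1}{q}-\frac{2}{d+2}$, and then \eqref{XpLq} yields the continuous embedding $\mathcal{X}^q\hookrightarrow L^r((0,T)\times\Omega)$. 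Given a bounded sequence $(u_n)\subset\mathcal{X}^q$, I extract a subsequence converging in $L^q((0,T)\times\Omega)$ to some limit $u$; combining this with the uniform $L^r$ bound on $(u_n-u)$ and H\"older interpolation
\[
\|w\|_{L^{Nq}}\le \|w\|_{L^q}^{\theta}\|w\|_{L^r}^{1-\theta},\qquad \frac{1}{Nq}=\frac{\theta}{q}+\frac{1-\theta}{r},
\]
promotes the convergence to $L^{Nq}((0,T)\times\Omega)$, which is the desired compactness.

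I do not anticipate any substantive obstacle: the only real check is that the numerology of the hypothesis $p>(d+2)/2$ in (1) and of \eqref{11:44} in (2) matches the critical thresholds in the embeddings \eqref{XpLq}--\eqref{XpWq}, which holds by direct arithmetic.
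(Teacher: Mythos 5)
Your proof is correct and follows essentially the same route as the paper's: for item (1) the Leibniz rule together with $u,v\in L^\infty$ from \eqref{XpLq} and $\nabla u,\nabla v\in L^{2p}$ from \eqref{XpWq}, and for item (2) the compact Aubin--Lions embedding $\mathcal{X}^q\hookrightarrow\hookrightarrow L^q((0,T)\times\Omega)$ upgraded to $L^{Nq}$ by a H\"older interpolation against a higher exponent made admissible by \eqref{11:44}. The numerology checks you perform match the paper's.
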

\begin{proof}
For the first point, we consider $f,g\in \mathcal{X}^p$. Then
$$
\partial_t f, \ \partial_t g, \ \nabla^2 f, \ \nabla^2 g\in L^p(0,T;L^{p}(\Omega)),
$$
and from \eqref{XpLq} and \eqref{XpWq}
$$
f,\ g\in L^{\infty}(0,T;L^{\infty}(\Omega)), \quad \nabla f,\  \nabla g \in L^{2p}(0,T;L^{2p}(\Omega)).
$$
We thus deduce that 
$$
\partial_t(fg), \  \nabla^2 (fg) \in L^p(0,T;L^{p}(\Omega)).
$$
For the second point, we can use \eqref{11:44} to consider $p>1$ such that 
\begin{equation}\label{11:44-2}
\frac{1}{qN}>\frac{1}{p} > \frac{1}{q}- \frac{2}{2+d}.
\end{equation}
We thus deduce from \eqref{XpLq} that
$$
\mathcal{X}^q \hookrightarrow L^{p}((0,T)\times \Omega) \hookrightarrow L^{N q}((0,T)\times \Omega)
$$
and from the H\"older inequality, there exists $\theta\in (0,1)$ such that 
\begin{equation}\label{20:50}
\left\| f \right\|_{L^{N q}((0,T)\times \Omega)} \leq \left\| f \right\|_{L^{q}((0,T)\times \Omega)}^{\theta} \left\| f \right\|_{L^{p}((0,T)\times \Omega)}^{1-\theta}
\quad (f\in L^{p}((0,T)\times \Omega)).
\end{equation}
From the Aubin-Lions lemma (see, for instance, \cite[Section 8, Corollary 4]{Sim87}), 
the embedding 
$$
\mathcal{X}^q \hookrightarrow L^{q}((0,T)\times \Omega) \quad \text{is compact}.
$$
Consequently, if $\left( f_n\right)$ is a bounded sequence of $\mathcal{X}^q$, it has a subsequence converging in $L^{q}((0,T)\times \Omega)$ and bounded in 
$L^{p}((0,T)\times \Omega)$. From \eqref{20:50}, this subsequence is converging in $L^{qN}((0,T)\times \Omega)$.
\end{proof}

\subsection{Linear heat equation}
Let us first consider the linear nonhomogenenous heat equation
\begin{equation}
\label{eq:heatL}
\left\{
\begin{array}{ll}
\partial_t y - \Delta y = g &\mathrm{in}\ (0,T)\times\Omega,\\
y= 0&\mathrm{on}\ (0,T)\times\partial\Omega,\\
y(0,\cdot)=y_{0}& \mathrm{in}\ \Omega.
\end{array}
\right.
\end{equation}
In this article, we need several definitions of solutions for \eqref{eq:heatL}:
\begin{Definition}\label{D3}
We introduce three concepts of solutions for \eqref{eq:heatL}.
\begin{enumerate}
\item If $y_0\in W_0^{2/p',p}(\Omega)$ and $g \in L^p((0,T)\times \Omega)$, we say that $y\in \mathcal{X}^p$ is a strong solution of \eqref{eq:heatL}
if it satisfies \eqref{eq:heatL} a.e. and in the trace sense.
\item If $y_0 \in L^2(\Omega)$ and $g \in L^2(0,T;H^{-1}(\Omega))$, we say that $y \in L^2(0,T;H_0^1(\Omega)) \cap H^1(0,T;H^{-1}(\Omega))$ is a weak solution
if 
\begin{multline}
\label{eq:formvarL}
\int_0^T \langle \partial_t y (t,\cdot), \zeta(t,\cdot) \rangle)_{H^{-1}(\Omega), H_0^1(\Omega)} dt 
+ \int_{0}^T \int_{\Omega} \nabla y(t,x) \cdot \nabla \zeta(t,x) dt dx
\\ 
= \int_0^T \langle g(t,\cdot), \zeta(t,\cdot) \rangle_{H^{-1}(\Omega), H_0^1(\Omega)} dt\qquad \forall \zeta \in L^2(0,T;H_0^1(\Omega)),
\end{multline}
and
\begin{equation}
\label{eq:dataL2}
y(0,\cdot) = y_0 \ \text{in}\ L^2(\Omega).
\end{equation}
\item If $y_0 \in L^1(\Omega)$ and $g \in L^{1}((0,T)\times \Omega)$, we say that $y \in L^p((0,T)\times \Omega)$ is a very weak solution of \eqref{eq:heatL} if
$$
\iint_{(0,T)\times \Omega} y (-\partial_t \zeta - \Delta \zeta) dt dx 
=\iint_{(0,T)\times \Omega} g  \zeta  dt dx 
+ \int_{\Omega} y_0(x) \zeta(0,x) dx \qquad \forall \zeta\in C^\infty_c([0,T)\times \Omega).
$$
\end{enumerate}
\end{Definition}
We recall the following implications 
$$ \text{strong solution}\ \implies \text{weak solution}\ \implies\ \text{very weak solution},$$
and the reverse implications are also true assuming that $y$ is regular enough. We also note that the definition of weak solution is meaningful due to the continuous embedding
(see, for instance, \cite[Theorem 3, p.303]{Eva10})
\begin{equation}
\label{eq:embbedingL2}
L^2(0,T;H_0^1(\Omega)) \cap H^1(0,T;H^{-1}(\Omega)) \hookrightarrow C^0([0,T];L^2(\Omega)).
\end{equation}
We also state standard results for the well-posedness of \eqref{eq:heatL} (see, for instance \cite[Theorems 3 and 4, pp.378-379]{Eva10},
\cite[Theorem 7.1, p.181]{LSU68} and \cite[Theorem 9.1, p.341]{LSU68}):
\begin{Theorem}\label{T03}
The following well-posedness results hold.
\begin{enumerate}
\item For any $y_0 \in L^2(\Omega)$ and $g \in L^2(0,T;H^{-1}(\Omega))$, the equation \eqref{eq:heatL} admits a unique weak solution 
$y$ and we have the estimate 
\begin{equation}
\label{eq:estimateL2}
\norme{y}_{L^2(0,T;H_0^1(\Omega)) \cap H^1(0,T;H^{-1}(\Omega))} \lesssim \norme{y_0}_{L^2(\Omega)} + \norme{g}_{L^2(0,T;H^{-1}(\Omega))}.
\end{equation}
\item For $y_0 \in L^{\infty}(\Omega)$ and $g \in L^{\infty}((0,T)\times \Omega)$, the unique weak solution $y$ of \eqref{eq:heatL} satisfies
\begin{equation}
\label{eq:estimationLinftyheatL}
\norme{y}_{L^{\infty}((0,T)\times \Omega)} \lesssim \norme{y_0}_{L^{\infty}(\Omega)} + \norme{g}_{L^{\infty}((0,T)\times \Omega)}.
\end{equation}
\item Assume $p\in (1,\infty)$. For any $y_0\in W_0^{2/p',p}(\Omega)$ and $g \in L^p((0,T)\times \Omega)$, there
exists a unique strong solution $y\in \mathcal{X}^p$ of \eqref{eq:heatL} and we have the estimate 
\begin{equation}
\label{eq:EstimationLpheatL}
\left\| y \right\|_{\mathcal{X}^p} \lesssim \left\| y_0 \right\|_{W^{2/p',p}(\Omega)} + \left\| g \right\|_{L^p((0,T)\times \Omega)}.
\end{equation}
\end{enumerate}
\end{Theorem}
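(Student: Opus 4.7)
The three statements of \cref{T03} are classical textbook results on the linear heat equation, so my plan is to treat them separately with short arguments, mostly relying on the cited references \cite{Eva10,LSU68}. None of them is the focus of the paper, so I would not try to be self-contained.

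For part (1), I would use the Faedo--Galerkin method with the Dirichlet eigenbasis $(\varphi_k)_{k\geq 1}$ of $-\Delta$. Testing the projected equation against the approximate solution $y_n$ gives the standard energy identity; Young's inequality applied to the duality pairing $\psc{g}{y_n}_{H^{-1},H^1_0}$ then produces a uniform bound on $y_n$ in $L^2(0,T;H_0^1(\Omega))$. Reading $\partial_t y_n$ off the equation and using the fact that $\Delta \colon H^1_0(\Omega)\to H^{-1}(\Omega)$ is bounded yields the matching bound in $L^2(0,T;H^{-1}(\Omega))$. Extracting a weak limit produces a solution of \eqref{eq:formvarL}, and the embedding \eqref{eq:embbedingL2} together with continuity in time makes \eqref{eq:dataL2} meaningful. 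Uniqueness is immediate from the energy identity applied to the difference of two solutions, and the estimate \eqref{eq:estimateL2} is the uniform bound passing to the limit.

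For part (2), the natural plan is the parabolic maximum principle. Let $\overline{y}(t):=\norme{y_0}_{L^\infty(\Omega)}+t\norme{g}_{L^\infty((0,T)\times\Omega)}$; then $\overline{y}$ is an explicit supersolution, with initial and boundary data dominating those of $y$. A symmetric argument on $-y$ with source $-g$ yields the lower bound. At the weak-solution level I would justify this rigorously via Stampacchia truncation, testing the equation satisfied by $y-\overline{y}$ against the admissible test function $(y-\overline{y})_+\in L^2(0,T;H^1_0(\Omega))$; the resulting identity has nonpositive right-hand side, which forces $(y-\overline{y})_+\equiv 0$ and gives \eqref{eq:estimationLinftyheatL}.

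Part (3) is the deepest statement: it is exactly the maximal $L^p$ regularity property of the Dirichlet Laplacian on the $C^2$ domain $\Omega$, with $W_0^{2/p',p}(\Omega)$ being the optimal trace space compatible with \eqref{eq:embeddingXpContinuousFrac}. The a priori bound \eqref{eq:EstimationLpheatL} is the content of \cite[Theorem 9.1, p.341]{LSU68} and is proved there via explicit potential representations and Calderón--Zygmund type estimates on the heat kernel; existence and uniqueness in $\mathcal{X}^p$ follow by approximating smooth data (for which strong solvability is classical) and passing to the limit thanks to the a priori estimate. The only nontrivial step, and the one step that a self-contained proof would have to work through, is the $L^p$ boundedness of the singular integral operator associated with the second spatial derivative of the heat potential; since this is precisely what LSU establish, the efficient route is simply to quote the result.
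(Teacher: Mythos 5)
Your proposal is correct and matches the paper's treatment: the paper gives no proof of \cref{T03}, simply citing \cite[Theorems 3 and 4, pp.378-379]{Eva10} for part (1), \cite[Theorem 7.1, p.181]{LSU68} for part (2), and \cite[Theorem 9.1, p.341]{LSU68} for part (3), which are exactly the results your Galerkin argument, comparison/truncation argument, and maximal $L^p$ regularity citation reproduce or invoke. Nothing further is needed.
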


\subsection{Semi-linear heat equation}

For $N\in \mathbb{N}$, $N \geq 2$, let us then consider the semi-linear heat equation
\begin{equation}
\label{eq:heatSLg}
\left\{
\begin{array}{ll}
\partial_t y - \Delta y = y^N + g &\mathrm{in}\ (0,T)\times\Omega,\\
y= 0&\mathrm{on}\ (0,T)\times\partial\Omega,\\
y(0,\cdot)=y_{0}& \mathrm{in}\ \Omega.
\end{array}
\right.
\end{equation}
The space $\mathcal{W}$ is defined in \eqref{21:47}.
First we recall the definition of a weak solution for the system \eqref{eq:heatSLg}:
\begin{Definition}\label{D1}
We say that $y\in \mathcal{W}$
is a weak solution of \eqref{eq:heatSLg} if
\begin{multline}
\label{eq:formvarSL}
\int_0^T \langle \partial_t y (t,\cdot), \zeta(t,\cdot) \rangle)_{H^{-1}(\Omega), H_0^1(\Omega)} dt 
+ \int_{0}^T \int_{\Omega} \nabla y(t,x) \cdot \nabla \zeta(t,x) dt dx
\\ 
= \int_0^T \int_{\Omega} y^N(t,x) \zeta(t,x) dt dx 
+  \int_0^T \langle g(t,\cdot), \zeta(t,\cdot) \rangle_{H^{-1}(\Omega), H_0^1(\Omega)} dt\qquad \forall \zeta \in L^2(0,T;H_0^1(\Omega)),
\end{multline}
and
\begin{equation}
\label{eq:dataL2SL}
y(0,\cdot) = y_0 \ \text{in}\ L^2(\Omega).
\end{equation}
\end{Definition}

Let us state the following well-posedness result for \eqref{eq:heatSLg} for small data. This result is standard, but we recall the proof for completeness.
\begin{Theorem}
\label{Th:WPLinftySL}
Assume $p$ satisfies \eqref{09:42}.
There exists $\delta >0$ small enough such that for any $y_0 \in L^{\infty}(\Omega)$ and $g \in L^p((0,T)\times \Omega)$, satisfying
\begin{equation}
\label{eq:smallinitialsource}
\norme{y_0}_{L^{\infty}(\Omega)} + \norme{g}_{L^p((0,T)\times \Omega)} \leq \delta,
\end{equation}
the system \eqref{eq:heatSLg} admits a unique weak solution. Moreover, we have
\begin{equation}
\label{eq:smallnonlinearityLinfty}
\norme{y}_{\mathcal{W}}+\norme{y^N}_{L^{\infty}((0,T)\times \Omega)} \lesssim \norme{y_0}_{L^{\infty}(\Omega)} + \norme{g}_{L^p((0,T)\times \Omega)}.
\end{equation}
\end{Theorem}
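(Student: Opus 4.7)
I would set up a Banach fixed-point argument for the map $\Phi$ that sends $u \in L^\infty((0,T)\times\Omega)$ to the unique weak solution $Y$ of the linear Cauchy problem
\begin{equation*}
\partial_t Y - \Delta Y = u^N + g \ \text{in}\ (0,T)\times\Omega, \quad Y = 0 \ \text{on}\ (0,T)\times\partial\Omega, \quad Y(0,\cdot) = y_0.
\end{equation*}
The main technical obstacle is that, although the source $u^N + g$ lies in $L^p((0,T)\times\Omega)$, the initial datum $y_0 \in L^\infty(\Omega)$ is not a priori in $W_0^{2/p',p}(\Omega)$, so one cannot directly feed $Y$ itself into the $L^p$ maximal regularity statement \Cref{T03}(3). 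To bypass this I would split $Y = Y_1 + Y_2$, where $Y_1$ solves the homogeneous heat equation with initial datum $y_0$ and $Y_2$ solves the inhomogeneous equation with zero initial datum and right-hand side $u^N + g$.

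For the self-map property, the parabolic maximum principle (applied to $\pm Y_1$) gives $\norme{Y_1}_{L^\infty((0,T)\times\Omega)} \leq \norme{y_0}_{L^\infty(\Omega)}$, while \Cref{T03}(3) applied to $Y_2$ yields $\norme{Y_2}_{\mathcal{X}^p} \lesssim \norme{u^N}_{L^p}+\norme{g}_{L^p}$. Combined with the embedding \eqref{10:06}, which is available since $p > (d+2)/2$, and the crude estimate $\norme{u^N}_{L^p} \leq (T|\Omega|)^{1/p}\norme{u}_{L^\infty}^N$, this produces
\begin{equation*}
\norme{\Phi(u)}_{L^\infty((0,T)\times\Omega)} \leq \norme{y_0}_{L^\infty(\Omega)} + C\norme{u}_{L^\infty}^N + C\norme{g}_{L^p((0,T)\times\Omega)}.
\end{equation*}
Setting $R := 2(\norme{y_0}_{L^\infty} + C\norme{g}_{L^p})$ and requiring $\delta$ small enough that $CR^{N-1} \leq 1/2$, the closed ball $B_R := \set{u \in L^\infty((0,T)\times\Omega) : \norme{u}_{L^\infty} \leq R}$ is sent into itself by $\Phi$.

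For the contraction property, given $u_1, u_2 \in B_R$, the difference $Z = \Phi(u_1) - \Phi(u_2)$ solves the linear heat equation with zero data and right-hand side $u_1^N - u_2^N$. Using the elementary bound $|u_1^N - u_2^N| \leq N(|u_1|^{N-1}+|u_2|^{N-1})|u_1 - u_2|$ together with the same maximal regularity and embedding, I would obtain $\norme{Z}_{L^\infty} \leq C R^{N-1} \norme{u_1 - u_2}_{L^\infty}$, which is a strict contraction for $\delta$ small. The Banach fixed-point theorem then produces a unique $y \in B_R$ solving \eqref{eq:heatSLg}, together with $\norme{y}_{L^\infty} + \norme{y^N}_{L^\infty} \lesssim \norme{y_0}_{L^\infty} + \norme{g}_{L^p}$ (the nonlinear term being absorbed since $\norme{y}_{L^\infty}^{N-1} \leq R^{N-1} \leq 1$). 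To upgrade this to the full $\mathcal{W}$-estimate \eqref{eq:smallnonlinearityLinfty}, I would feed $y^N + g \in L^2(0,T;L^2(\Omega)) \hookrightarrow L^2(0,T;H^{-1}(\Omega))$, where the first inclusion uses $p \geq 2$ from \eqref{09:42}, together with $y_0 \in L^2(\Omega)$, into \Cref{T03}(1). Uniqueness in the full class $\mathcal{W}$ follows by applying the same contraction estimate to any two candidate solutions, after noticing that each one is forced into $B_R$ thanks to the smallness condition \eqref{eq:smallinitialsource}.
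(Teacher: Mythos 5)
Your existence argument is essentially the paper's: a Banach fixed point on a small ball of $L^\infty((0,T)\times\Omega)$, with the same splitting into a homogeneous part (handled by the $L^\infty$ maximum principle, i.e.\ \Cref{T03}(2)) and an inhomogeneous part with zero data (handled by $L^p$ maximal regularity, \Cref{T03}(3), plus the embedding \eqref{10:06}); the paper iterates on $F=y^N$ rather than on $y$ itself, which is only a cosmetic difference. The self-map and contraction estimates, the absorption of $\|y^N\|_{L^\infty}$, and the upgrade to the $\mathcal{W}$-norm via \Cref{T03}(1) are all fine (for $p=\infty$ you should replace $p$ by any finite exponent larger than $(d+2)/2$ before invoking \Cref{T03}(3), but that is trivial).

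The gap is in the uniqueness step. You assert that any weak solution $y\in\mathcal{W}$ of \eqref{eq:heatSLg} is ``forced into $B_R$ thanks to the smallness condition,'' but this does not follow from your estimates. What the equation gives for an arbitrary weak solution with $M:=\|y\|_{L^\infty((0,T)\times\Omega)}$ is only the a priori inequality $M\leq \|y_0\|_{L^\infty(\Omega)}+C\|g\|_{L^p((0,T)\times\Omega)}+C'M^N$, and this inequality is perfectly consistent with $M$ being large: it excludes an intermediate range of values of $M$, not the large ones. To conclude $M\leq R$ you would need a continuation/connectedness argument on $t\mapsto\|y\|_{L^\infty((0,t)\times\Omega)}$, including a justification that this quantity starts below the threshold as $t\to 0^+$ (delicate, since the heat semigroup is not strongly continuous on $L^\infty(\Omega)$; one has to go back to the decomposition $Y_1+Y_2$ and use $Y_2\in\mathcal{X}^p\hookrightarrow C^0([0,T];L^\infty(\Omega))$ with $Y_2(0,\cdot)=0$). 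None of this is in your proposal. The paper sidesteps the issue entirely: for two weak solutions $y_1,y_2\in\mathcal{W}$ it writes the energy identity for the difference, uses only that $y_1,y_2\in L^\infty((0,T)\times\Omega)$ (with whatever norm) to bound $y_1^N-y_2^N$ by a constant times $|y_1-y_2|$, and concludes by Gr\"onwall. You should either adopt that argument or supply the missing continuation argument; as written, your uniqueness claim is unproved.
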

\begin{proof}
First, we show that for any $F\in L^{\infty}((0,T)\times \Omega)$, there exists a unique weak solution
to the heat equation
\begin{equation}
\label{eq:heatL-A}
\left\{
\begin{array}{ll}
\partial_t y - \Delta y = F+g &\mathrm{in}\ (0,T)\times\Omega,\\
y= 0&\mathrm{on}\ (0,T)\times\partial\Omega,\\
y(0,\cdot)=y_{0}& \mathrm{in}\ \Omega.
\end{array}
\right.
\end{equation}
In order to do this, we can write $y=y_1+y_2$ with
\begin{equation}
\label{eq:heatL-B}
\left\{
\begin{array}{ll}
\partial_t y_1 - \Delta y_1 = F &\mathrm{in}\ (0,T)\times\Omega,\\
y_1= 0&\mathrm{on}\ (0,T)\times\partial\Omega,\\
y_1(0,\cdot)=y_{0}& \mathrm{in}\ \Omega,
\end{array}
\right.
\quad
\left\{
\begin{array}{ll}
\partial_t y_2 - \Delta y_2 = g &\mathrm{in}\ (0,T)\times\Omega,\\
y_2= 0&\mathrm{on}\ (0,T)\times\partial\Omega,\\
y_2(0,\cdot)=0& \mathrm{in}\ \Omega,
\end{array}
\right.
\end{equation}
Applying \Cref{T03}, the above systems admit respectively a unique solution $y_1\in \mathcal{W}$ and $y_2\in \mathcal{X}^p$
and with the hypotheses on $p$, we deduce from \eqref{XpLq} that $\mathcal{X}^p\hookrightarrow \mathcal{W}$.
We conclude the existence and the uniqueness of a weak solution $y\in \mathcal{W}$ of \eqref{eq:heatL-A} and we have the estimate 
\begin{equation}\label{19:30}
\left\| y \right\|_{L^{\infty}((0,T)\times \Omega)} \lesssim \norme{y_0}_{L^{\infty}(\Omega)} + \norme{g}_{L^p((0,T)\times \Omega)} +
\left\| F \right\|_{L^{\infty}((0,T)\times \Omega)}.
\end{equation}
We can thus define the following mapping 
\begin{equation}
\label{eq:defNheatSlg}
\mathcal{N} : L^{\infty}((0,T)\times \Omega) \to L^{\infty}((0,T)\times \Omega), \quad F\mapsto y^N,
\end{equation}
where $y$ is the unique weak solution to \eqref{eq:heatL-A} and if $y_0$ and $g$ satisfy \eqref{eq:smallinitialsource}
and if we consider 
\begin{equation}
\label{eq:smallballLinfty}
B_\delta  := \{F \in L^{\infty}((0,T)\times \Omega)\ ;\ \norme{F}_{L^{\infty}((0,T)\times \Omega)} \leq \delta\},
\end{equation}
then we deduce from \eqref{19:30} that for $\delta>0$ small enough, $\mathcal{N}(B_\delta)\subset B_\delta$. We can also show a similar way that
the restriction of $\mathcal{N}$ on $B_\delta$ is a strict contraction. The Banach fixed point yields the existence of a unique fixed point $F$ and the corresponding solution
$y$ of \eqref{eq:heatL-A} is a weak solution of \eqref{eq:heatSLg}.

For the uniqueness, we consider $y_1,y_2\in \mathcal{W}$ two solutions of \eqref{eq:heatSLg}. Then, $y:=y_1- y_1$ satisfies (in a weak sense)
\begin{equation}
\label{eq:heatSLgDiff}
\left\{
\begin{array}{ll}
\partial_t {y} - \Delta {y} = y_1^N - {y}_2^N &\mathrm{in}\ (0,T)\times\Omega,\\
{y}= 0&\mathrm{on}\ (0,T)\times\partial\Omega,\\
{y}(0,\cdot)=0& \mathrm{in}\ \Omega.
\end{array}
\right.
\end{equation}
In particular, using that $y_1,y_2\in L^\infty((0,T)\times \Omega)$, we can write the standard energy estimate: for any $t\in [0,T]$,
$$
\left\| y(t,\cdot) \right\|_{L^2(\Omega)}^2 \leq \int_0^t\int_\Omega y \left( y_1^N - {y}_2^N \right)  ds dx
\lesssim \int_0^t \left\| y(s,\cdot) \right\|_{L^2(\Omega)}^2 ds,
$$
and we conclude with the Gr\"onwall lemma.
\end{proof}
We now state some regularizing effects of \eqref{eq:heatSLg}.
\begin{Lemma}\label{L20}
Assume the same hypotheses of \cref{Th:WPLinftySL} and let us consider $y$ the corresponding weak solution of \eqref{eq:heatSLg}.
\begin{enumerate}
\item If $g=0$ then for any $t\in (0,T]$ and for any $q>1$, $y(t,\cdot)\in W^{2/q',q}_0(\Omega)$. Moreover, we have the estimate
\begin{equation}
\label{eq:smallyTWq'}
\norme{y(t,\cdot)}_{W^{2/q',q}(\Omega)} \lesssim_{t,q} \norme{y_0}_{L^{\infty}(\Omega)}.
\end{equation}
\item In the general case, for any $t\in (0,T]$,  $y(t,\cdot)\in L^\infty(\Omega)$ and we have the estimate
\begin{equation}
\label{eq:smallyTWq'2}
\norme{y(t,\cdot)}_{L^\infty(\Omega)} \lesssim_{t,q} \norme{y_0}_{L^{\infty}(\Omega)}+ \norme{g}_{L^p((0,T)\times \Omega)}.
\end{equation}
\end{enumerate}
\end{Lemma}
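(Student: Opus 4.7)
The plan is to exploit the parabolic smoothing effect via a time-cutoff trick that circumvents the low regularity of the initial datum. The starting point is the \emph{a priori} bound $\|y\|_{L^{\infty}((0,T)\times\Omega)}\lesssim\|y_0\|_{L^{\infty}(\Omega)}+\|g\|_{L^p((0,T)\times\Omega)}$ already provided by \Cref{Th:WPLinftySL} via \eqref{eq:smallnonlinearityLinfty}. The difficulty is that $y_0\in L^{\infty}(\Omega)$ need not lie in $W^{2/q',q}_0(\Omega)$ (for $q>1$), so the maximal-regularity estimate \eqref{eq:EstimationLpheatL} cannot be applied directly on $(0,T)$. The cutoff will kill the initial datum, leaving an equation that starts from zero and whose source is already known to be bounded.

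Fix $t\in(0,T]$ and pick $\eta\in C^{\infty}([0,T])$ with $\eta\equiv 0$ on $[0,t/3]$ and $\eta\equiv 1$ on $[2t/3,T]$. Setting $\widetilde{y}:=\eta y$, a direct computation shows that $\widetilde{y}\in\mathcal{W}$ solves (in the weak sense)
\[
\partial_t\widetilde{y}-\Delta\widetilde{y}=\eta y^{N}+\eta' y+\eta g\quad\text{in }(0,T)\times\Omega,
\]
with $\widetilde{y}=0$ on $(0,T)\times\partial\Omega$ and $\widetilde{y}(0,\cdot)=0$, while $\widetilde{y}(t,\cdot)=y(t,\cdot)$.

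For the first statement ($g=0$), the source $\eta y^{N}+\eta' y$ belongs to $L^{\infty}((0,T)\times\Omega)$, hence to $L^{q}((0,T)\times\Omega)$ for every $q>1$. Applying the $L^q$ maximal regularity estimate \eqref{eq:EstimationLpheatL} to $\widetilde{y}$ (which has zero initial trace) yields
\[
\|\widetilde{y}\|_{\mathcal{X}^{q}}\lesssim_{t,q}\|y^{N}\|_{L^{\infty}}+\|y\|_{L^{\infty}}\lesssim_{t,q}\|y_0\|_{L^{\infty}(\Omega)},
\]
where in the last step we used \eqref{eq:smallnonlinearityLinfty} together with the smallness $\delta^{N-1}\leq 1$ to absorb the nonlinear term into a linear bound. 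The continuous embedding \eqref{eq:embeddingXpContinuousFrac} then gives $\widetilde{y}(t,\cdot)\in W^{2/q',q}_{0}(\Omega)$ (the trace being zero when $2/q'>1/q$, and the space coinciding with $W^{2/q',q}(\Omega)$ otherwise), and since $\widetilde{y}(t,\cdot)=y(t,\cdot)$ this gives \eqref{eq:smallyTWq'}.

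For the second statement, the same cutoff argument applies but the source $\eta y^{N}+\eta'y+\eta g$ is only in $L^{p}((0,T)\times\Omega)$. Maximal regularity \eqref{eq:EstimationLpheatL} and \eqref{eq:smallnonlinearityLinfty} give
\[
\|\widetilde{y}\|_{\mathcal{X}^{p}}\lesssim_{t}\|y_0\|_{L^{\infty}(\Omega)}+\|g\|_{L^{p}((0,T)\times\Omega)},
\]
and the embedding \eqref{10:06}, which requires precisely the assumption $p>(d+2)/2$ from \eqref{09:42}, yields $\widetilde{y}\in C^{0}([0,T];L^{\infty}(\Omega))$, concluding \eqref{eq:smallyTWq'2}. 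The only real technical point to monitor is that the computation of the equation for $\widetilde{y}$ is genuinely valid in $H^{-1}$ (using $y\in\mathcal{W}$ and $\eta\in C^{\infty}$), so that the weak-solution framework for \eqref{eq:heatL-A} can indeed be invoked for $\widetilde{y}$.
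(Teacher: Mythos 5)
Your proof is correct and follows essentially the same route as the paper: the paper multiplies $y$ by $\theta(t)=t$ rather than by a smooth cutoff $\eta$, but in both cases the point is to annihilate the initial datum, apply maximal $L^q$ (resp. $L^p$) regularity to the resulting equation with zero initial trace and a source controlled by \eqref{eq:smallnonlinearityLinfty}, and conclude with the embeddings \eqref{eq:embeddingXpContinuousFrac} and \eqref{10:06}. The minor differences (smooth cutoff versus $\theta(t)=t$, the explicit remark on the trace condition $2/q'>1/q$) are cosmetic.
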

\begin{proof}
Let us denote by $\theta$ the function $\theta(t)=t$ for $t\in \mathbb{R}$. Then we deduce from \eqref{eq:heatSLg} that
$$
\left\{
\begin{array}{ll}
\partial_t (\theta y) - \Delta (\theta y) = y +\theta y^N &\mathrm{in}\ (0,T)\times\Omega,\\
\theta y= 0&\mathrm{on}\ (0,T)\times\partial\Omega,\\
(\theta y)(0,\cdot)=0& \mathrm{in}\ \Omega
\end{array}
\right.
$$
and from \cref{Th:WPLinftySL}, 
$$
\left\| y +\theta y^N  \right\|_{L^\infty((0,T)\times \Omega)} \lesssim \left\| y_0 \right\|_{L^\infty(\Omega)}.
$$
Applying \cref{T03}, we deduce that $\theta y\in \mathcal{X}^q$ for any $q>1$, and we conclude with \eqref{eq:embeddingXpContinuousFrac}.

The second point can be done similarly by using \eqref{10:06} and that $p$ satisfies \eqref{09:42}.
\end{proof}

The above definition and properties can be extended to the parabolic system
\begin{equation}
\label{eq:ReactionDiffusionSLSourceg}
\left\{
\begin{array}{ll}
\partial_t y_1 - \Delta y_1 =y_1^{N_1} + g &\mathrm{in}\ (0,T)\times\Omega,\\
\partial_t y_2 - \Delta y_2 = y_1^{N_2}+ y_2^{N_3}  &\mathrm{in}\ (0,T)\times\Omega,\\
y_1=y_2= 0&\mathrm{on}\ (0,T)\times\partial\Omega,\\
y_1(0,\cdot)=y_{1,0}, \quad y_2(0,\cdot)=y_{2,0}& \mathrm{in}\ \Omega.
\end{array}
\right.
\end{equation}
More precisely, we have the following definition and well-posedness results:
\begin{Definition}\label{D2}
We say that $(y_1,y_2)\in \mathcal{W}\times \mathcal{W}$ is a weak solution of \eqref{eq:ReactionDiffusionSLSourceg} if
\begin{multline*}
\int_0^T \langle \partial_t y_1 (t,\cdot), \zeta(t,\cdot) \rangle)_{H^{-1}(\Omega), H_0^1(\Omega)} dt 
+ \int_{0}^T \int_{\Omega} \nabla y_1(t,x) \cdot \nabla \zeta(t,x) dt dx
\\ 
= \int_0^T \int_{\Omega} y_1^{N_1}(t,x) \zeta(t,x) dt dx 
+  \int_0^T \langle g(t,\cdot), \zeta(t,\cdot) \rangle_{H^{-1}(\Omega), H_0^1(\Omega)} dt\qquad \forall \zeta \in L^2(0,T;H_0^1(\Omega)),
\end{multline*}
\begin{multline*}
\int_0^T \langle \partial_t y_2 (t,\cdot), \zeta(t,\cdot) \rangle)_{H^{-1}(\Omega), H_0^1(\Omega)} dt 
+ \int_{0}^T \int_{\Omega} \nabla y_2(t,x) \cdot \nabla \zeta(t,x) dt dx
\\ 
= \int_0^T \int_{\Omega} y_1^{N_2}(t,x) \zeta(t,x) dt dx +\int_0^T \int_{\Omega} y_2^{N_3}(t,x) \zeta(t,x) dt dx 
\qquad \forall \zeta \in L^2(0,T;H_0^1(\Omega)),
\end{multline*}
and
\begin{equation}
\label{eq:dataL2SLSystem}
(y_1,y_2)(0,\cdot) = (y_{1,0},y_{2,0}) \ \text{in}\ L^2(\Omega)^2.
\end{equation}
\end{Definition}

\begin{Theorem}
\label{Th:WPLinftySLReactionDiffusion}
Assume $p$ satisfies \eqref{09:42}. 
There exists $\delta >0$ small enough such that for any $(y_{1,0},y_{2,0}) \in L^{\infty}(\Omega)^2$ and 
$g \in L^p((0,T)\times \Omega)$, satisfying
\begin{equation}
\label{eq:smallinitialsourceSystem}
\norme{(y_{1,0},y_{2,0})}_{L^{\infty}(\Omega)^2} + \norme{g}_{L^p((0,T)\times \Omega)} \leq \delta,
\end{equation}
the system \eqref{eq:ReactionDiffusionSLSourceg} admits a unique weak solution. Moreover, we have
\begin{multline}
\label{eq:smallnonlinearityLinftySystem}
\left\| y_1 \right\|_{\mathcal{W}}+\left\| y_2 \right\|_{\mathcal{W}}+
\norme{y_1^{N_1}}_{L^{\infty}((0,T)\times \Omega)} + \norme{y_1^{N_2}}_{L^{\infty}((0,T)\times \Omega)} + \norme{y_2^{N_3}}_{L^{\infty}((0,T)\times \Omega)} 
\\
\lesssim \norme{y_0}_{L^{\infty}(\Omega)} + \norme{g}_{L^p((0,T)\times \Omega)}.
\end{multline}
\end{Theorem}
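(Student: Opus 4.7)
The plan is to exploit the cascade structure of \eqref{eq:ReactionDiffusionSLSourceg} to reduce the system to two successive applications of \Cref{Th:WPLinftySL}, each for a scalar semi-linear heat equation. Since $y_1$ does not depend on $y_2$, we can first solve for $y_1$ and then treat the second equation as a scalar semi-linear heat equation with an additional source term $y_1^{N_2}$.

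Concretely, I would first apply \Cref{Th:WPLinftySL} (with exponent $N=N_1$) to the first equation of \eqref{eq:ReactionDiffusionSLSourceg}. Assuming $\delta$ is smaller than the threshold provided by that theorem, the hypothesis \eqref{eq:smallinitialsourceSystem} yields $\norme{y_{1,0}}_{L^\infty(\Omega)}+\norme{g}_{L^p((0,T)\times \Omega)}\leq \delta$, so there exists a unique weak solution $y_1 \in \mathcal{W}$ of the first equation satisfying
\begin{equation*}
\norme{y_1}_{\mathcal{W}} + \norme{y_1^{N_1}}_{L^\infty((0,T)\times \Omega)} \lesssim \norme{y_{1,0}}_{L^\infty(\Omega)} + \norme{g}_{L^p((0,T)\times \Omega)} \lesssim \delta.
\end{equation*}
Since $\mathcal{W}\hookrightarrow L^\infty((0,T)\times \Omega)$ by definition, we deduce
\begin{equation*}
\norme{y_1^{N_2}}_{L^\infty((0,T)\times \Omega)} \leq \norme{y_1}_{L^\infty((0,T)\times \Omega)}^{N_2} \lesssim \delta^{N_2},
\end{equation*}
so $\tilde g := y_1^{N_2}$ belongs to $L^\infty\subset L^p((0,T)\times \Omega)$ with $\norme{\tilde g}_{L^p((0,T)\times \Omega)}\lesssim \delta^{N_2}$.

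Next, I would rewrite the second equation as a scalar semi-linear heat equation with unknown $y_2$, nonlinearity $y_2^{N_3}$ and source term $\tilde g$, and apply \Cref{Th:WPLinftySL} again (with $N=N_3$). For $\delta$ small enough, $\norme{y_{2,0}}_{L^\infty(\Omega)} + \norme{\tilde g}_{L^p((0,T)\times \Omega)}\leq \delta + C\delta^{N_2}$ fits below the smallness threshold of \Cref{Th:WPLinftySL}, and we obtain a unique weak solution $y_2\in \mathcal{W}$ with
\begin{equation*}
\norme{y_2}_{\mathcal{W}} + \norme{y_2^{N_3}}_{L^\infty((0,T)\times \Omega)} \lesssim \norme{y_{2,0}}_{L^\infty(\Omega)} + \norme{\tilde g}_{L^p((0,T)\times \Omega)} \lesssim \delta.
\end{equation*}
Combining these bounds with $\norme{y_1^{N_2}}_{L^\infty} \lesssim \delta^{N_2}\lesssim \delta$ yields the global estimate \eqref{eq:smallnonlinearityLinftySystem}. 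Uniqueness of $(y_1,y_2)$ follows in the same cascade fashion: if $(y_1,y_2)$ and $(\tilde y_1,\tilde y_2)$ are two weak solutions in $\mathcal{W}\times \mathcal{W}$ in the small ball dictated by \eqref{eq:smallnonlinearityLinftySystem}, then uniqueness in \Cref{Th:WPLinftySL} applied to the first equation forces $y_1=\tilde y_1$, and then the same theorem applied to the second equation with common source $y_1^{N_2}$ forces $y_2=\tilde y_2$.

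There is no real obstacle here, as the cascade structure decouples the analysis into two scalar problems already solved in \Cref{Th:WPLinftySL}. The only point to be careful about is to track the smallness constants, namely to choose $\delta$ small enough so that each application of \Cref{Th:WPLinftySL} is legitimate; this uses crucially that $N_2\geq 1$, so that $\delta^{N_2}$ can absorb the loss of constants coming from $\mathcal{W}\hookrightarrow L^\infty$ and from the Lebesgue measure of $(0,T)\times \Omega$ when passing from $L^\infty$ to $L^p$.
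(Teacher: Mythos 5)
Your proposal is correct and is essentially the argument the paper intends: the paper states \Cref{Th:WPLinftySLReactionDiffusion} without proof, presenting it as a direct extension of \Cref{Th:WPLinftySL}, and your cascade reduction (solve the first equation by \Cref{Th:WPLinftySL} with $N=N_1$, then feed $y_1^{N_2}$ as a source into a second application with $N=N_3$) is exactly that extension. The only bookkeeping point is that to get \eqref{eq:smallnonlinearityLinftySystem} linearly in the data one should write $\norme{y_1^{N_2}}_{L^\infty((0,T)\times\Omega)}\lesssim \delta^{N_2-1}\left(\norme{y_{1,0}}_{L^\infty(\Omega)}+\norme{g}_{L^p((0,T)\times\Omega)}\right)$ rather than stopping at $\delta^{N_2}$, which your remark on $N_2\geq 1$ already covers in substance.
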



\section{Proof of \cref{th:mainresult2}}
\label{sec:NullControlHeatSlSmooth}

The goal of this part is to prove \cref{th:mainresult2}. 

We first set
\begin{equation}
\label{eq:rho0}
\rho_0(t):= \exp\left(- \frac{1}{t(T-t)}\right)\quad (t\in (0,T)), 
\quad \rho(0)=\rho(T)=0,
\end{equation}
and
\begin{equation}
\label{eq:rho}
\rho(t):= 
\left\{
\begin{array}{l}
\exp\left(- \frac{1}{(T/2)^2}\right)\quad (t\in [0,T/2)), \\
\exp\left(- \frac{1}{t(T-t)}\right)\quad (t\in [T/2,T)), 
\end{array}
\right.
\quad \rho(T)=0.
\end{equation}

Using \cref{L20}, that is taking the control $h \equiv 0$ in $[0,T/2]\times\omega$ in order to benefit from the regularizing effect of the semi-linear heat equation \eqref{eq:heatSL}, we see that it is sufficient to show the following result.
\begin{Theorem}\label{th:MainresultSmooth}
Assume $N, n\in \mathbb{N}$, $N\geq 2$ and $T>0$. 
Let us consider $p$ satisfying
\begin{equation}\label{eq:defpOdd}
p=(2n+1)(2k+1)+1,
\end{equation}
with $k\in \mathbb{N}$ large enough so that
\begin{equation}\label{eq:defpgrand}
p>\frac{d+2}{2},
\end{equation}
and $q\geq p'$ satisfying \eqref{11:44}.
There exist $\delta>0$ and $m>0$ such that for any initial data $y_0\in W^{\frac{2}{q'},q}_0(\Omega)$ with
$$
\left\| y_0 \right\|_{W^{\frac{2}{q'},q}(\Omega)} \leq \delta,
$$ 
there exists a control $h$ and a strong solution $y$ of \eqref{eq:heatSL} such that
\begin{equation}\label{10:56}
\frac{y}{\rho^{m}} \in \mathcal X^q, \quad y^N\in L^q(0,T;L^q(\Omega)), \quad
h \in L^{\infty}(0,T;L^\infty(\Omega)), \quad \left(\frac{h}{\rho_0^{m} }\right)^{1/(2n+1)} \in \mathcal X^p,
\quad
\end{equation}
together with the estimate
\begin{equation}\label{16:59-Result-2}
\norme{ \frac{y}{\rho^{m}} }_{\mathcal{X}^{q}} +\left\| \frac{y^N}{\rho^{m_1}} \right\|_{L^q(0,T;L^q(\Omega))}^{1/N}
+\left\| \frac{h}{\rho_0^{m}} \right\|_{L^{\infty}(0,T;L^\infty(\Omega))}
+\norme{\left(\frac{h}{\rho_0^{m} }\right)^{1/(2n+1)} }_{\mathcal X^p}^{2n+1}
\lesssim 
 \norme{y_0}_{W^{2/q',q}(\Omega)}.
\end{equation}
In particular, $h$ satisfies \eqref{eq:smallcontrolheatSL} and \eqref{eq:oddcontrol} and $y$ satisfies \eqref{eq:nully}. 
\end{Theorem}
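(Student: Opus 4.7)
The plan is to establish \cref{th:MainresultSmooth} by a Schauder fixed-point argument. For $z$ in a suitable closed convex subset $K$ of $L^{Nq}((0,T)\times\Omega)$, I consider the linear controllability problem
$$
\partial_t y - \Delta y = h\chi_{\omega} + z^N \text{ in } (0,T)\times\Omega,\qquad y|_{(0,T)\times\partial\Omega}=0,\qquad y(0,\cdot)=y_0,
$$
and solve it with an ``odd'' control $h$ satisfying the weighted regularity listed in \eqref{10:56}, together with the endpoint constraint $y(T,\cdot)=0$. The map $\Lambda:z\mapsto y$ is then the candidate for Schauder, and any fixed point solves \eqref{eq:heatSL} with the required properties.

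\textbf{Linear control with source.} The first step is a weighted null-controllability result with odd controls for the inhomogeneous heat equation $\partial_t y-\Delta y = h\chi_\omega + f$. This is produced from the classical $L^2$ Carleman inequality for the adjoint equation: via $L^{p'}$ maximal regularity one upgrades it to a weighted $L^{p'}$ observability inequality, and a Hilbert-Uniqueness-Method-type duality argument in the reflexive Banach setting then yields a (linear, continuous) map $(y_0,f)\mapsto (y,h)$ with an estimate of the form
$$
\bigl\|y/\rho^m\bigr\|_{\mathcal{X}^q} + \bigl\|(h/\rho_0^m)^{1/(2n+1)}\bigr\|_{\mathcal{X}^p}^{2n+1} \lesssim \|y_0\|_{W^{2/q',q}(\Omega)} + \|f/\rho^{m_1}\|_{L^q((0,T)\times\Omega)},
$$
together with $y(T,\cdot)=0$. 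The algebraic choice \eqref{eq:defpOdd}--\eqref{eq:defpgrand} is arranged precisely so that taking $(2n+1)$-th powers of $\mathcal{X}^p$ functions lands back into a good integrability class and a single bootstrap closes the regularity of $h$ (cf.\ the comment after \cref{th:mainresult2} about a single bootstrap).

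\textbf{Self-map and compactness.} Applying the linear result with $f=z^N$ defines $\Lambda:z\mapsto y$. For the self-map, one writes $\|z^N/\rho^{m_1}\|_{L^q}=\|z/\rho^{m_1/N}\|_{L^{Nq}}^N$; choosing $m_1=mN$ and using the continuous embedding $\mathcal{X}^q\hookrightarrow L^{Nq}$, guaranteed by \eqref{11:44} and \cref{L01}, converts weighted $\mathcal{X}^q$ bounds into weighted $L^{Nq}$ bounds. Taking $K$ to be the ball $\{z : \|z/\rho^m\|_{L^{Nq}}\leq R\}$ with $R\sim \|y_0\|_{W^{2/q',q}(\Omega)}$, the linear estimate above yields $\|\Lambda(z)/\rho^m\|_{\mathcal{X}^q}\lesssim R+R^N\leq R$ provided $\delta$ (and hence $R$) is small enough, so $\Lambda(K)\subset K$. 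Compactness of $\Lambda(K)$ in $L^{Nq}$ follows from the \emph{compact} embedding $\mathcal{X}^q\hookrightarrow L^{Nq}$ in \cref{L01}, which is exactly why hypothesis \eqref{11:44} is imposed; the weight $\rho^m$ vanishing at $t=T$ is not an obstruction because it only improves the tail.

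\textbf{Continuity and main obstacle.} Continuity of $\Lambda$ on $K$ in the $L^{Nq}$ topology reduces to continuity of the linear control map $f\mapsto y$, which is immediate from its linearity and the weighted estimate, combined with continuity of $z\mapsto z^N$ on bounded sets of $L^{Nq}$. The genuine difficulty lies in the linear step: one has to match the Carleman weights to the $L^{p'}$ maximal regularity estimate so that the source appears with the \emph{same} weight on both sides of the observability inequality, and then choose the weights on the control side so that $h^{1/(2n+1)}$ lies in $\mathcal{X}^p$ and vanishes at $t=0$ and $t=T$, yielding the odd-behaviour condition \eqref{eq:oddcontrol}. Once Schauder provides a fixed point $y^\ast$, the weighted estimates from the linear step immediately produce \eqref{10:56} and \eqref{16:59-Result-2}; the unweighted consequences \eqref{eq:smallcontrolheatSL}, \eqref{eq:oddcontrol} and \eqref{eq:nully} then follow from the boundedness of $\rho,\rho_0$ and from $\rho(T)=\rho_0(0)=\rho_0(T)=0$.
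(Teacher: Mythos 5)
Your overall architecture coincides with the paper's: a Carleman estimate upgraded to a weighted $L^p$ observability inequality by maximal regularity, a duality (HUM-in-$L^p$) construction of an odd control for the heat equation with a source, and a Schauder fixed point to absorb the nonlinearity. (The paper fixes the \emph{source} $F$ and iterates $F\mapsto y^N$, whereas you fix the \emph{state} $z$ and iterate $z\mapsto y$; these are equivalent reformulations.) However, there is one genuine gap in your argument: you assert that the duality step yields a \emph{linear} continuous map $(y_0,f)\mapsto(y,h)$ and that continuity of $\Lambda$ is therefore ``immediate from its linearity.'' This cannot be right, and the error is not cosmetic. To obtain a control of the form $h=-\rho_0^{m_0p}\chi_\omega^{p-1}\overline{\zeta}^{\,p-1}$ --- i.e.\ an odd power of a smooth function, which is the whole point of the theorem --- one must minimize a functional involving $p$-th powers (see \eqref{eq:defJ}), whose Euler--Lagrange equation \eqref{eq:EL} is nonlinear in $\overline{\zeta}$. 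Consequently the map $(y_0,F)\mapsto(\overline{\zeta},y,h)$ is nonlinear; a genuinely linear HUM (the $L^2$ case) would destroy the odd structure of $h$.

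Establishing continuity of the fixed-point map is in fact one of the substantive steps of the paper (\cref{L3}): one subtracts the Euler--Lagrange identities for two sources $F_1,F_2$, tests with $\overline{\zeta}_1-\overline{\zeta}_2$, and uses the elementary inequality $(x_1-x_2)^p\lesssim(x_1^{p-1}-x_2^{p-1})(x_1-x_2)$ to obtain
$\left\| \overline{\zeta}_1-\overline{\zeta}_2 \right\|_{\mathcal{Y}}^p \lesssim \left\| F_1-F_2\right\|_{L^{p'}_{m_1}(0,T;L^{p'}(\Omega))}^{p'}$,
whence the map is only $\alpha$-H\"older continuous with $\alpha=1/(p-1)<1$ (this is precisely why the paper uses Schauder rather than Banach). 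Your proposal needs this comparison argument spelled out; as written, the continuity step would fail. The remaining ingredients (self-map via $\|z^N/\rho^{m_1}\|_{L^q}=\|z/\rho^{m_1/N}\|_{L^{Nq}}^N$ with $m=m_1/N$, compactness from \cref{L01}, and the weight bookkeeping $m_1/N<m_0p-m_1(p-1)$ enforced by the choice of $r$ in \cref{L4Obs}) are consistent with the paper.
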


\subsection{Carleman estimate and $L^2$ observability inequality for the heat equation}
The goal of this part is to deduce a weighted $L^2$ observability inequality for the heat equation from a Carleman estimate.
We first recall a standard Carleman estimate for the heat equation that is due to Fursikov and Imanuvilov \cite{FI}. 
We start by introducing a nonempty domain $\omega_0$ such that $\chi_\omega>0$ on $\overline{\omega_0}\subset\omega$. 
By using \cite{FI}, see also \cite[Theorem 9.4.3]{TucsnakWeiss}, there exists
$\eta^0\in C^2(\overline{\Omega})$ satisfying
\begin{equation}
\label{eq:defeta0}
\eta^0>0 \ \text{in}\ \Omega, \quad 
\eta^0= 0 \ \text{on} \ \partial\Omega,\quad
\max_\Omega \eta^0=1,\quad
\nabla\eta^0\neq 0 \ \text{in}\ \overline{\Omega\setminus \omega_0}.
\end{equation}
We then define the following functions:
\begin{equation} \label{eq:weights}
\alpha(t,x)=\frac{\exp\left(4\lambda\right)-\exp\{\lambda (2+\eta^0(x))\}}{t(T-t)},
	\quad
\xi(t,x)=\frac{\exp\{\lambda (2+\eta^0(x))\}}{t(T-t)}.
\end{equation}
We can now state the Carleman estimate for the heat equation, see \cite[Lemma 1.3]{FG}.
\begin{Theorem}\label{T04}
There exist $\lambda_0, s_0, C_0\in \mathbb{R}_+^*$ such that for any $\lambda\geq \lambda_0$, $s\geq s_0(T+T^2)$, 
$\zeta\in \mathcal{X}^2$ with $\zeta=0$ on $(0,T)\times \partial \Omega$,
\begin{multline}\label{eq:dCarlemanheatL2}
\iint_{(0,T)\times \Omega} s^3\lambda^4 \xi^3 e^{-2s\alpha} \left| \zeta \right|^2 \ dx dt 
\\
\leq C_0 \left( 
\iint_{(0,T)\times \Omega} e^{-2s\alpha} \left| \partial_t\zeta+\Delta \zeta \right|^2 \ dx dt
+\iint_{(0,T)\times \Omega} s^3\lambda^4 \xi^3 e^{-2s\alpha} \left| \chi_{\omega} \zeta \right|^2 \ dx dt
\right).
\end{multline}
\end{Theorem}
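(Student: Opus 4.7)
The statement is a standard global Carleman estimate for the heat equation, and my plan follows the Fursikov--Imanuvilov strategy. The idea is to reduce the estimate on $\zeta$ to a weighted estimate on the conjugated unknown $\varphi := e^{-s\alpha}\zeta$, split the resulting conjugated operator into its $L^2$-symmetric and $L^2$-antisymmetric pieces, and then extract positivity by integrating the cross term by parts.

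First I would set $\varphi := e^{-s\alpha}\zeta$. A direct computation using the explicit form of the weights \eqref{eq:weights} --- in particular $\nabla\alpha = -\lambda\xi\nabla\eta^0$ and the fact that $\partial_t\alpha$ and $\partial_t\xi$ grow like $\xi^2$ in time --- yields a decomposition
$$
e^{-s\alpha}(\partial_t\zeta+\Delta\zeta) = P_1\varphi + P_2\varphi + R\varphi,
$$
where $P_1$ collects the $L^2$-symmetric terms (essentially $\Delta\varphi + s^2\lambda^2\xi^2|\nabla\eta^0|^2\varphi$), $P_2$ the antisymmetric ones ($\partial_t\varphi + 2s\lambda\xi\nabla\eta^0\cdot\nabla\varphi$ plus a zeroth-order correction), and $R\varphi$ is of lower order in $(s,\lambda)$. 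Squaring and integrating gives
$$
\|P_1\varphi\|_{L^2}^2 + \|P_2\varphi\|_{L^2}^2 + 2\iint P_1\varphi\,P_2\varphi\,dx\,dt \lesssim \iint e^{-2s\alpha}|\partial_t\zeta+\Delta\zeta|^2\,dx\,dt + \|R\varphi\|_{L^2}^2.
$$

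The heart of the proof is the computation of the cross term $2\iint P_1\varphi\,P_2\varphi$. Each product is expanded and integrated by parts in $x$ and $t$. The boundary contributions on $(0,T)\times\partial\Omega$ vanish because $\varphi|_{\partial\Omega}=0$ and $\eta^0=0$ on $\partial\Omega$, with $\partial_\nu\eta^0\leq 0$ providing the correct sign on the surviving boundary integrals; the contributions at $t=0,T$ vanish because $e^{-2s\alpha}\to 0$ at both endpoints. Collecting the surviving terms, the dominant positive contribution is
$$
\iint\bigl(s^3\lambda^4\xi^3|\nabla\eta^0|^4|\varphi|^2 + s\lambda^2\xi|\nabla\eta^0|^2|\nabla\varphi|^2\bigr)\,dx\,dt,
$$
while all remaining terms carry strictly lower powers of $s$ or $\lambda$ and are absorbed provided $s\geq s_0(T+T^2)$ and $\lambda\geq\lambda_0$.

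The main obstacle is that $|\nabla\eta^0|$ degenerates inside $\omega_0$, so the previous positivity only holds on $(0,T)\times(\Omega\setminus\omega_0)$. To close the estimate on all of $\Omega$ I would add and subtract the missing contribution on $\omega_0$ and then bound the local term by the observation term on $\omega$: pick a smooth cutoff $\psi$ with $\psi\equiv 1$ on $\omega_0$ and $\operatorname{supp}\psi\subset\{\chi_\omega>0\}\subset\omega$, use $s^3\lambda^4\xi^3 e^{-2s\alpha}\psi\zeta$ as a multiplier against $\partial_t\zeta+\Delta\zeta$, and integrate by parts twice in $x$. The gradient terms produced in this way are handled by Young's inequality and absorbed by the global $|\nabla\varphi|^2$ piece (for $s$ large enough). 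Undoing the change of unknown via $\zeta = e^{s\alpha}\varphi$ converts the left-hand side into $\iint s^3\lambda^4\xi^3 e^{-2s\alpha}|\zeta|^2$, which yields \eqref{eq:dCarlemanheatL2}.
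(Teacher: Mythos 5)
The paper does not prove \cref{T04} at all: it is quoted as a known result, with a pointer to \cite[Lemma 1.3]{FG} (and ultimately to \cite{FI}), so there is no internal proof to compare against. Your sketch is a faithful outline of the standard Fursikov--Imanuvilov argument behind that cited lemma, and its main steps are correct: the conjugation $\varphi=e^{-s\alpha}\zeta$, the symmetric/antisymmetric splitting, the identities $\nabla\alpha=-\lambda\xi\nabla\eta^0$ and $|\partial_t\alpha|\lesssim Te^{2\lambda}\xi^2$, the sign $\partial_\nu\eta^0\leq 0$ on $\partial\Omega$ coming from \eqref{eq:defeta0}, the vanishing of the weight at $t=0,T$, the positivity $s^3\lambda^4\xi^3|\nabla\eta^0|^4|\varphi|^2+s\lambda^2\xi|\nabla\eta^0|^2|\nabla\varphi|^2$ extracted from the cross term, and the restriction of that positivity to $\Omega\setminus\omega_0$ where $|\nabla\eta^0|$ is bounded below. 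One remark on the last step: since the left-hand side of \eqref{eq:dCarlemanheatL2} contains only the zeroth-order term and the observation term already carries the full weight $s^3\lambda^4\xi^3$, you can conclude by simply adding $\iint_{(0,T)\times\omega_0}s^3\lambda^4\xi^3e^{-2s\alpha}|\zeta|^2$ to both sides and using that $\chi_\omega$ is bounded below on the compact set $\overline{\omega_0}$; no multiplier argument is needed for this version. The localization multiplier is only required if you want to keep the gradient (and higher-order) terms on the left, and in that case the correct weight is $s\lambda^2\xi e^{-2s\alpha}\psi$ rather than $s^3\lambda^4\xi^3 e^{-2s\alpha}\psi$ as you wrote: with your choice the terms produced by the two integrations by parts carry powers such as $s^5\lambda^6\xi^5$ locally, which cannot be absorbed by the $s^3\lambda^4\xi^3$ observation term. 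This is a slip in a step that the stated inequality does not actually need, so the proposal as a whole stands.
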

From the above result, one can obtain a similar estimate with weights depending only on time. We recall that $\rho_0$ and $\rho$ are defined in \eqref{eq:rho0} and \eqref{eq:rho}. We have that $\rho_0,\rho\in W^{1,\infty}(0,T)\cap C^0([0,T])$ and 
\begin{equation}
\label{eq:comparaisonrho0rho}
\rho_0 \leq \rho\leq 1,\qquad \left|\left(\frac{\rho_0}{\rho}\right)'\right| \lesssim 1.
\end{equation}
Moreover, we have the following instrumental estimates
\begin{equation}
\label{eq:comparaisonrhom1m2}
m_1 < m_2 \Rightarrow \left(\rho^{m_2} \leq \rho^{m_1},\ |(\rho^{m_2})'| \lesssim \rho^{m_1}\right).
\end{equation}

With the above notation, we can state the following corollary of \cref{T04}.
\begin{Corollary}\label{C1}
Assume $r>1$. Then, there exist $m_0, M_0 \in \mathbb{R}_+^*$ with
\begin{equation}\label{eq:m0M0r0}
m_0 < M_0 < r m_0,
\end{equation}
such that for any $\zeta\in \mathcal{X}^2$ with $\zeta=0$ on $(0,T)\times\partial\Omega$ the following relation holds
\begin{equation}
\label{eq:ObsL2}
\norme{\zeta(0,\cdot)}_{L^2(\Omega)}+ \norme{\rho^{M_0} \zeta}_{L^2(0,T;L^2(\Omega))}
\lesssim
\norme{\rho^{m_0}\pare{\partial_t \zeta+\Delta \zeta} }_{L^2(0,T;L^2(\Omega))}
+ \norme{\rho_0^{m_0} \zeta \chi_{\omega}}_{L^2(0,T;L^2(\Omega))}.
\end{equation}
\end{Corollary}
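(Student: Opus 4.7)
The strategy is to convert the $(t,x)$-dependent Carleman weight of \cref{T04} into the purely temporal weights $\rho, \rho_0$ by elementary pointwise comparisons, and to propagate the resulting estimate back to $t=0$ via an energy estimate for the backward parabolic equation on $[0,T/2]$, where $\rho$ is a strictly positive constant and therefore cannot be matched by the vanishing Carleman weight.

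Fixing $\lambda \geq \lambda_0$ and $s \geq s_0(T+T^2)$ and setting $C_1 := e^{4\lambda}-e^{2\lambda}$, $C_2 := e^{4\lambda}-e^{3\lambda}$, the fact that $\eta^0(x)\in[0,1]$ with $\max \eta^0 = 1$ yields, for all $(t,x)\in (0,T)\times\Omega$,
$$\rho_0(t)^{2sC_1} \leq e^{-2s\alpha(t,x)} \leq \rho_0(t)^{2sC_2}, \qquad \frac{e^{2\lambda}}{t(T-t)}\leq \xi(t,x) \leq \frac{e^{3\lambda}}{t(T-t)}.$$
Since $C_1/C_2 = (1-e^{-2\lambda})/(1-e^{-\lambda}) \to 1$ as $\lambda\to\infty$, for the given $r>1$ I first choose $\lambda$ so large that $C_1/C_2<r$, and then pick small $\epsilon,\eta>0$ so that
$$m_0 := sC_2 - \epsilon, \qquad M_0 := sC_1 + \eta$$
satisfy $m_0<M_0<rm_0$. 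The strict inequalities $M_0>sC_1$ and $m_0<sC_2$ are precisely what allow me to absorb the polynomial factor $(t(T-t))^{-3}$ coming from $\xi^3$, via the elementary bound $u^k e^{-cu} \leq C e^{-c'u}$ valid whenever $c>c'>0$.

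Specializing \cref{T04} to $[T/2,T]$, on which $\rho=\rho_0$, the pointwise comparisons
$$\rho(t)^{2M_0} \lesssim s^3\lambda^4 \xi^3 e^{-2s\alpha}, \qquad s^3\lambda^4\xi^3 e^{-2s\alpha} \lesssim \rho_0(t)^{2m_0}, \qquad e^{-2s\alpha(t,x)} \lesssim \rho(t)^{2m_0}$$
(the last using $\rho_0\leq\rho$) plug directly into \cref{T04} and give
$$\int_{T/2}^T \|\rho^{M_0}\zeta\|_{L^2(\Omega)}^2\, dt \lesssim \|\rho^{m_0}(\partial_t\zeta+\Delta\zeta)\|_{L^2(0,T;L^2(\Omega))}^2 + \|\rho_0^{m_0}\chi_\omega\zeta\|_{L^2(0,T;L^2(\Omega))}^2.$$
For the complementary interval $[0,T/2]$, where $\rho^{2M_0}$ is a positive constant, I use the backward energy identity $\frac{d}{dt}\|\zeta\|_{L^2}^2 = 2\|\nabla\zeta\|_{L^2}^2 + 2(\partial_t\zeta+\Delta\zeta,\zeta)_{L^2}$ combined with Gr\"onwall to obtain, for $0\leq t_0\leq t_1\leq 3T/4$,
$$\|\zeta(t_0,\cdot)\|_{L^2(\Omega)}^2 \lesssim_T \|\zeta(t_1,\cdot)\|_{L^2(\Omega)}^2 + \int_0^{3T/4}\|(\partial_t\zeta+\Delta\zeta)(s,\cdot)\|_{L^2(\Omega)}^2\, ds.$$
Averaging $t_1$ over $[T/2,3T/4]$, on which $\rho^{2M_0}$ is bounded below by a positive constant, dominates the first right-hand term by the estimate already established on $[T/2,T]$. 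Since both $\rho$ and $\rho_0$ are bounded below on $[0,3T/4]$, the source integral is $\lesssim \int_0^T \rho^{2m_0}\|\partial_t\zeta+\Delta\zeta\|_{L^2}^2\, ds$. Taking $t_0=0$ controls $\|\zeta(0,\cdot)\|_{L^2}$, and integrating $t_0$ over $[0,T/2]$ supplies the missing $\int_0^{T/2}\|\rho^{M_0}\zeta\|_{L^2}^2\, dt$; adding the two ranges gives \eqref{eq:ObsL2}.

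The main delicate point is the strict ordering $m_0<M_0<rm_0$: it is only compatible with the natural weight conditions $M_0>sC_1$ and $m_0<sC_2$ once $\lambda$ has been chosen large enough that $C_1/C_2<r$. The choice of $\lambda$ must therefore be made before $s$, $m_0$, and $M_0$ are fixed; once this ordering is respected, all remaining comparisons between the Carleman weight and the time-only weights $\rho, \rho_0$ reduce to elementary calculations.
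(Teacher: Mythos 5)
Your proof is correct and follows essentially the same strategy as the paper: compare the Carleman weight pointwise with powers of $\rho_0$, choose $\lambda$ large enough that the two exponents $m_0$ and $M_0$ lie within a factor $r$ of each other, restrict the left-hand side to $[T/2,T]$ where $\rho=\rho_0$, and then propagate the estimate back to $[0,T/2]$ and to $t=0$ by a backward parabolic estimate. The only inessential difference is in that last step: the paper applies $L^2$ maximal regularity to $\chi_T\zeta$ for a time cutoff $\chi_T$ together with the embedding of $\mathcal{X}^2$ into $C^0([0,T];L^2(\Omega))$, whereas you use the elementary backward energy identity plus Gr\"onwall, a slightly more self-contained route to the same inequality.
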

We want to highlight the fact that the dependence in space of the Carleman weights appearing in \eqref{eq:dCarlemanheatL2} has been removed in \eqref{eq:ObsL2}. Moreover, it is worth mentioning that the vanishing property at $t=T$ of the Carleman weights for the left-hand-side of \eqref{eq:dCarlemanheatL2} has been dropped. This is why one can make appeared the first left-hand-side of \eqref{eq:ObsL2}, that is the classical left-hand-side term for proving a $L^2$ observability inequality. The same remark applies for the first right-hand-side term of \eqref{eq:dCarlemanheatL2} to get the first right-hand-side term of \eqref{eq:ObsL2}. Finally, the fact that $m_0$ and $M_0$ are comparable is quantified in \eqref{eq:m0M0r0}.
\begin{proof}[Proof of \cref{C1}]
We consider $s_0$ and $\lambda_0$ from \Cref{T04}. Then, we deduce from \eqref{eq:defeta0} and \eqref{eq:weights} that
for any $\lambda\geq \lambda_0$, $s\geq s_0(T+T^2)$, 
$$
1 \lesssim s^3\lambda^4 \xi^3 \lesssim (s \lambda^2 \xi)^3 \lesssim e^{s\lambda^2\frac{e^{3\lambda}}{t(T-t)}}.
$$
Therefore combining these estimates with \eqref{eq:defeta0} and \eqref{eq:weights} and taking $s=s_0(T+T^2)$,
we deduce that
$$
\rho_0^{M_0} \lesssim s^3\lambda^4 \xi^3 e^{-s\alpha},
\quad 
e^{-s\alpha}\lesssim s^3\lambda^4 \xi^3 e^{-s\alpha}\lesssim  \rho_0^{m_0}
$$
with
$$
M_0:=s_0(T+T^2)\left(e^{4\lambda}-e^{2\lambda}\right), \quad m_0:=s_0(T+T^2)\left(e^{4\lambda}-e^{3\lambda} (1+\lambda^2) \right).
$$
We now fix $\lambda=\lambda_0$ large enough, so that \eqref{eq:m0M0r0} holds. Applying \eqref{eq:dCarlemanheatL2}, we obtain
\begin{equation}\label{13:13}
\norme{\rho_0^{M_0} \zeta}_{L^2(0,T;L^2(\Omega))}
\lesssim
\norme{\rho_0^{m_0}\pare{\partial_t \zeta+\Delta \zeta} }_{L^2(0,T;L^2(\Omega))}
+ \norme{\rho_0^{m_0} \zeta \chi_{\omega}}_{L^2(0,T;L^2(\Omega))}.
\end{equation}
Using \eqref{eq:rho0} and \eqref{eq:rho}, the above relation yields
\begin{equation}\label{13:13-a}
\norme{\rho^{M_0} \zeta}_{L^2(T/2,T;L^2(\Omega))}
\lesssim
\norme{\rho_0^{m_0}\pare{\partial_t \zeta+\Delta \zeta} }_{L^2(0,T;L^2(\Omega))}
+ \norme{\rho_0^{m_0} \zeta \chi_{\omega}}_{L^2(0,T;L^2(\Omega))}.
\end{equation}
Let us consider $\chi_T\in C^\infty([0,T])$, $\chi_T \equiv 1$ in $[0,T/2]$, $\chi_T\equiv 0$ in $[3T/4,T]$ and $\left| \chi_T' \right| \lesssim 1/T$.
Then
\begin{equation}
\label{eq:chiT}
\left\{
\begin{array}{rl}
- \partial_t \pare{\chi_T\zeta} -  \Delta \pare{\chi_T\zeta} = -\left(\chi_T\right)'\zeta - \chi_T \left(\partial_t \zeta+\Delta \zeta \right) &\mathrm{in}\ (0,T)\times\Omega,\\
\pare{\chi_T\zeta}= 0&\text{on}\ (0,T)\times\partial\Omega,\\
\pare{\chi_T\zeta}(T,\cdot)=0& \text{in}\ \Omega.
\end{array}
\right.
\end{equation}
By using the maximal regularity of the heat equation in $L^2$ i.e. \Cref{T03} with $p=2$ to \eqref{eq:chiT} and the Sobolev embedding 
\eqref{eq:embeddingXpContinuousFrac} we deduce
$$
\norme{\zeta(0,\cdot)}_{L^2(\Omega)}+
\norme{\zeta}_{L^2(0,T/2;L^2(\Omega))}
\lesssim
 \norme{{\partial_t \zeta+\Delta \zeta} }_{L^2(0,3T/4;L^2(\Omega))}
+\norme{\zeta}_{L^2(T/2,3T/4;L^2(\Omega))},
$$
and thus by using that $\rho(t)=\rho(T/2)$ in $(0,T/2)$ and $\rho(t)\geq \rho(3T/4)$ in $(0,3T/4)$, we obtain 
$$
\norme{\zeta(0,\cdot)}_{L^2(\Omega)}+
\norme{\rho^{M_0} \zeta}_{L^2(0,T/2;L^2(\Omega))}
\lesssim
 \norme{\rho^{m_0} ({\partial_t \zeta+\Delta \zeta} )}_{L^2(0,T;L^2(\Omega))}
+\norme{\rho^{M_0} \zeta}_{L^2(T/2,T;L^2(\Omega))}.
$$
Combining this last estimate with \eqref{13:13-a} and \eqref{eq:comparaisonrho0rho}, we deduce the expected observability inequality \eqref{eq:ObsL2}.
\end{proof}

\subsection{A weighted $L^p$ observability inequality}

The goal of this part is to deduce from the weighted $L^2$ observability inequality in \Cref{C1} a weighted $L^p$ observability inequality for $p \geq 2$, by applying maximal regularity results for the heat equation. More precisely, we show the following result:
\begin{Proposition}\label{L4Obs}
Assume $p\geq 2$ and $r\in (1,p')$. Then, there exist $m_0, m_1 \in \mathbb{R}_+^*$ with
\begin{equation}\label{10:09}
m_0 < m_1 < r m_0,
\end{equation}
such that for any $\zeta\in \mathcal{X}^p$ with $\zeta=0$ on $(0,T)\times\partial\Omega$, the following relation holds
\begin{equation}
\label{eq:ObsLp}
\norme{\zeta(0,\cdot)}_{L^p(\Omega)}
+ \norme{\rho^{m_1} \zeta}_{L^p(0,T;L^p(\Omega))}
\lesssim
\norme{\rho^{m_0}\pare{\partial_t \zeta+\Delta \zeta} }_{L^p(0,T;L^p(\Omega))}
+ \norme{\rho_0^{m_0} \zeta \chi_{\omega}}_{L^p(0,T;L^p(\Omega))}.
\end{equation}
\end{Proposition}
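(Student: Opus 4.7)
The plan is to upgrade the $L^2$ observability inequality \eqref{eq:ObsL2} of \Cref{C1} to the desired $L^p$ inequality \eqref{eq:ObsLp} via a bootstrap argument. The engine is parabolic $L^q$ maximal regularity (\Cref{T03}, item 3) applied to time-weighted versions of $\zeta$, combined with the Sobolev embeddings $\mathcal{X}^q \hookrightarrow L^{q^*}$ from \eqref{XpLq}, which raise integrability by a fixed increment at each iteration.

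First I fix $r' \in (1, r)$ and apply \Cref{C1} to obtain $m_0 < M_0 < r' m_0$. Since $(0,T) \times \Omega$ is bounded and $p \geq 2$, Hölder's inequality lets me replace the $L^2$ right-hand side of \eqref{eq:ObsL2} by its $L^p$ analogue
\begin{equation*}
R_p := \norme{\rho^{m_0}(\partial_t \zeta + \Delta \zeta)}_{L^p((0,T) \times \Omega)} + \norme{\rho_0^{m_0} \chi_\omega \zeta}_{L^p((0,T) \times \Omega)},
\end{equation*}
which yields the base estimate $\norme{\zeta(0,\cdot)}_{L^2(\Omega)} + \norme{\rho^{M_0} \zeta}_{L^2((0,T) \times \Omega)} \lesssim R_p$. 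Next I build an increasing chain of weight exponents $M_0 = \mu_0 < \mu_1 < \cdots < \mu_K < r m_0$ with common increment $\varepsilon > 0$, paired with integrability exponents $2 = q_0 < q_1 < \cdots < q_K$ such that $\mathcal{X}^{q_{i-1}} \hookrightarrow L^{q_i}$ via \eqref{XpLq} and $q_K \geq p$. The Sobolev rule $1/q_i \geq 1/q_{i-1} - 2/(d+2)$ guarantees that a finite number $K = K(d,p)$ of steps suffices; then I choose $\varepsilon$ so small that $K \varepsilon < (r - r') m_0$, so $m_1 := \mu_K$ automatically lies in $(m_0, r m_0)$ as required.

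The inductive step runs as follows. At level $i$, set $w_i := \rho^{\mu_i} \zeta$, which vanishes on $(0,T) \times \partial \Omega$ and at $t = T$ (because $\rho(T) = 0$), and solves the backward heat equation
\begin{equation*}
- \partial_t w_i - \Delta w_i = -(\rho^{\mu_i})' \zeta - \rho^{\mu_i}(\partial_t \zeta + \Delta \zeta).
\end{equation*}
Time reversal and \Cref{T03}, item 3, in $L^{q_{i-1}}$ give $\norme{w_i}_{\mathcal{X}^{q_{i-1}}} \lesssim \norme{(\rho^{\mu_i})' \zeta}_{L^{q_{i-1}}} + \norme{\rho^{\mu_i}(\partial_t \zeta + \Delta \zeta)}_{L^{q_{i-1}}}$. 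The derivative bound $\abs{(\rho^{\mu_i})'} \lesssim \rho^{\mu_{i-1}}$ furnished by \eqref{eq:comparaisonrhom1m2}, together with the inductive estimate $\norme{\rho^{\mu_{i-1}} \zeta}_{L^{q_{i-1}}} \lesssim R_p$, absorbs the first term; the inequality $\rho^{\mu_i} \leq \rho^{m_0}$ combined with $q_{i-1} \leq p$ on a bounded domain reduces the second term to $R_p$. Composing with $\mathcal{X}^{q_{i-1}} \hookrightarrow L^{q_i}$ propagates $\norme{\rho^{\mu_i} \zeta}_{L^{q_i}} \lesssim R_p$, and after $K$ iterations I obtain $\norme{\rho^{m_1} \zeta}_{L^p((0,T) \times \Omega)} \lesssim R_p$.

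For the trace term $\norme{\zeta(0,\cdot)}_{L^p(\Omega)}$ I reuse the cutoff trick from the end of the proof of \Cref{C1}: pick $\chi_T \in C^{\infty}([0,T])$ equal to $1$ on $[0, T/2]$ and to $0$ on $[3T/4, T]$; then $\chi_T \zeta$ solves a backward heat equation with zero terminal condition and source supported in $[0, 3T/4]$, where $\rho$ is bounded below away from zero. $L^p$ maximal regularity together with the embedding \eqref{eq:embeddingXpContinuousFrac} then deliver $\norme{\zeta(0,\cdot)}_{L^p(\Omega)} \lesssim \norme{\rho^{m_1} \zeta}_{L^p} + R_p \lesssim R_p$. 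The main technical obstacle I foresee is the weight bookkeeping: the exponent must grow by $\varepsilon$ at each step to absorb the commutator $(\rho^{\mu_i})'$ coming from the time derivative, while the total growth $K \varepsilon$ must stay within the budget $(r - 1) m_0$ allotted by the statement. Since $K$ depends only on $d$ and $p$, this balance is comfortable for any $r > 1$, in particular for $r \in (1, p')$.
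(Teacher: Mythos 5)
Your argument is correct and follows essentially the same route as the paper: the same bootstrap alternating $L^{q}$ maximal regularity for the time-weighted backward equation (with the weight exponent nudged up at each step to absorb the commutator term $(\rho^{\mu_i})'\zeta$) with the Sobolev embeddings \eqref{XpLq}, starting from the weighted $L^2$ inequality of \cref{C1}. The only cosmetic differences are that you fix the increments $\varepsilon$ a priori (using an auxiliary $r'<r$) rather than choosing each intermediate exponent in $(M_{i-1},rm_0)$ on the fly, and that you recover the trace term $\norme{\zeta(0,\cdot)}_{L^p(\Omega)}$ by rerunning the cutoff argument of \cref{C1} instead of reading it off the final $\mathcal{X}^{p}$ bound on $\rho^{m_1}\zeta$ via the continuous-in-time embedding; both variants are equivalent.
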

The main difference between \eqref{eq:ObsLp} and \eqref{eq:ObsL2} is the $L^p$ framework. We want to highlight that $M_0$ of \eqref{eq:m0M0r0} has been transformed into $m_1$ of \eqref{10:09}. Basically, the proof is as follows. By a bootstrap argument, we apply recursively maximal regularity results in $L^r$, starting from $r=2$ together with Sobolev embeddings to obtain \eqref{eq:ObsLp}. During the induction process, $M_0$ becomes $M_1 \in (M_0, r m_0)$ then $M_2 \in (M_1,r m_0)$, etc. to finally take the value $m_1 \in (m_0,rm_0)$.
\begin{proof}
First, we apply \cref{C1} to obtain $m_0, M_0 \in \mathbb{R}_+^*$ satisfying \eqref{eq:m0M0r0}
and such that \eqref{eq:ObsL2} holds for any $\zeta\in \mathcal{X}^2$ with $\zeta=0$ on $(0,T)\times\partial\Omega$.
We then set $g:=- \partial_t \zeta-  \Delta \zeta$ so that for any $M_1>0$,
\begin{equation}
\label{eq:adjointweightM1}
\left\{
\begin{array}{rl}
- \partial_t \pare{\rho^{M_1}\zeta} -  \Delta \pare{\rho^{M_1}\zeta} = -\left(\rho^{M_1}\right)'\zeta+ \rho^{M_1}g &\mathrm{in}\ (0,T)\times\Omega,\\
\pare{\rho^{M_1}\zeta}= 0&\text{on}\ (0,T)\times\partial\Omega,\\
\pare{\rho^{M_1}\zeta}(T,\cdot)=0& \text{in}\ \Omega.
\end{array}
\right.
\end{equation}
In particular, if we consider $M_1\in (M_0,r m_0)$ then by \eqref{eq:m0M0r0}
and \eqref{eq:comparaisonrhom1m2}
$$
\abs{\left(\rho^{M_1}\right)'} \lesssim \rho^{M_0},\quad \rho^{M_1} \leq \rho^{m_0},
$$
so that
\begin{equation}
\label{eq:rhsL2}
\norme{\left(\rho^{M_1}\right)'\zeta}_{L^2(0,T;L^2(\Omega))}+ \norme{\rho^{M_1}g}_{L^2(0,T;L^2(\Omega))} \lesssim \norme{\rho^{M_0}\zeta}_{L^2(0,T;L^2(\Omega))}+ \norme{\rho^{m_0}g}_{L^2(0,T;L^2(\Omega))}.
\end{equation}
We can apply the maximal regularity result in $L^2$, i.e. \Cref{T03} with $p=2$ to \eqref{eq:adjointweightM1}, 
and use \eqref{eq:rhsL2} and the $L^2$ observability inequality \eqref{eq:ObsL2} to deduce 
\begin{equation}\label{18:23}
\norme{\rho^{M_1}\zeta}_{\mathcal{X}^2} \lesssim 
\norme{\rho^{m_0} g }_{L^2(0,T;L^2(\Omega))}
+ \norme{\rho_0^{m_0} \zeta \chi_{\omega}}_{L^2(0,T;L^2(\Omega))}.
\end{equation}
We then use the Sobolev embedding \eqref{XpLq} to deduce
\begin{equation}\label{22:36}
\mathcal{X}^2 \hookrightarrow L^{q_1}(0,T;L^{q_1}(\Omega))
\end{equation}
with $q_1\geq 2$ defined by
$$
\text{if}\quad \frac{1}{2}-\frac{2}{2+d}\leq \frac{1}{p} \quad \text{then} \ q_1 = p,\quad 
\text{else}\quad \frac{1}{q_1}=\frac{1}{2}-\frac{2}{2+d}.
$$
Then, we consider $M_2\in (M_1,r m_0)$ so that from \eqref{eq:comparaisonrhom1m2},
$$
\abs{\left(\rho^{M_2}\right)'} \lesssim \rho^{M_1},\quad \rho^{M_2} \leq \rho^{m_0},
$$
and with \eqref{22:36} and \eqref{18:23}, we deduce
\begin{align}
\norme{\left(\rho^{M_2}\right)'\zeta}_{L^{q_1}(0,T;L^{q_1}(\Omega))}+ \norme{\rho^{M_2}g}_{L^{q_1}(0,T;L^{q_1}(\Omega))} 
&\lesssim \norme{\rho^{M_1}\zeta}_{\mathcal{X}^2}+ \norme{\rho^{m_0}g}_{L^{q_1}(0,T;L^{q_1}(\Omega))}
\notag
\\
&\lesssim 
\norme{\rho^{m_0} g }_{L^{q_1}(0,T;L^{q_1}(\Omega))}
+ \norme{\rho_0^{m_0} \zeta \chi_{\omega}}_{L^2(0,T;L^2(\Omega))}.
\label{eq:rhsLq1}
\end{align}
Now we apply \cref{T03} to
\begin{equation}
\label{22:34}
\left\{
\begin{array}{rl}
- \partial_t \pare{\rho^{M_2}\zeta} -  \Delta \pare{\rho^{M_2}\zeta} = -\left(\rho^{M_2}\right)'\zeta+ \rho^{M_2}g &\mathrm{in}\ (0,T)\times\Omega,\\
\pare{\rho^{M_2}\zeta}= 0&\text{on}\ (0,T)\times\partial\Omega,\\
\pare{\rho^{M_2}\zeta}(T,\cdot)=0& \text{in}\ \Omega,
\end{array}
\right.
\end{equation}
with $p=q_1$, and using \eqref{eq:rhsLq1}, we obtain
\begin{equation}
\norme{\rho^{M_2}\zeta}_{\mathcal{X}^{q_1}} 
\lesssim 
\norme{\rho^{m_0} g }_{L^{q_1}(0,T;L^{q_1}(\Omega))}
+ \norme{\rho_0^{m_0} \zeta \chi_{\omega}}_{L^2(0,T;L^2(\Omega))}.\label{22:37}
\end{equation}
If $q_1 = p$, then using $H^1(0,T) \hookrightarrow C^0([0,T])$ and $L^{p}(0,T;L^{p}(\Omega)) \hookrightarrow L^2(0,T;L^2(\Omega))$, 
we deduce from the above relation the desired observability inequality \eqref{eq:ObsLp} with $m_1 = M_2$.
Else, we have $q_1 < p$ and we can repeat the argument, that is we use the Sobolev embedding \eqref{XpLq} to deduce
\begin{equation}\label{22:36-bis}
\mathcal{X}^{q_1} \hookrightarrow L^{q_2}(0,T;L^{q_2}(\Omega))
\end{equation}
with $q_2\geq q_1$ defined by
$$
\text{if}\quad \frac{1}{q_1}-\frac{2}{2+d}\leq \frac{1}{p} \quad \text{then} \ q_2 = p,\quad 
\text{else}\quad \frac{1}{q_2}=\frac{1}{q_1}-\frac{2}{2+d}=\frac{1}{2}-2\cdot\frac{2}{2+d}.
$$
Taking $M_3\in (M_2,r m_0)$, and proceeding as above, applying \cref{T03} with $p=q_2$ and using \eqref{22:36-bis} and \eqref{22:37}, we find
$$
\norme{\rho^{M_3}\zeta}_{\mathcal{X}^{q_2}} 
\lesssim 
\norme{\rho^{m_0} g }_{L^{q_2}(0,T;L^{q_2}(\Omega))}
+ \norme{\rho_0^{m_0} \zeta \chi_{\omega}}_{L^2(0,T;L^2(\Omega))}.
$$
We can proceed by induction and since $1/q_n$ decrease by $2/(2+d)$ at each step, after a finite number of steps, we obtain $q_n=p$ and we deduce the desired observability inequality \eqref{eq:ObsLp}.
\end{proof}

\subsection{Controllability of the heat equation with a source term in $L^{p'}$}\label{sec-st}
We use the above observability results to show, by a duality argument, the controllability of a linear system associated with \eqref{eq:heatSL}:
\begin{equation}
\label{eq:heatSource}
\left\{
\begin{array}{rl}
\partial_t y-  \Delta y =  h\chi_{\omega} + F &\mathrm{in}\ (0,T)\times\Omega,\\
y= 0&\mathrm{on}\ (0,T)\times\partial\Omega,\\
y(0,\cdot)=y_0& \mathrm{in}\ \Omega.
\end{array}
\right.
\end{equation}
In order to control the above system, we fix $p\in 2\mathbb{N}^*$ and we consider $m_0$ and $m_1$ as in \cref{L4Obs}.
Then, we introduce
\begin{equation}
\mathcal{Y}_0 := \{ \zeta \in C^{\infty}([0,T]\times\overline{\Omega})\ ;\ \zeta = 0 \ \text{on}\ (0,T)\times \partial \Omega\},
\end{equation}
and we define the following norm for $\zeta \in \mathcal{Y}_0$, 
\begin{equation}
\norme{\zeta}_{\mathcal{Y}} := 
\norme{\rho^{m_0} (\partial_t \zeta +\Delta \zeta)}_{L^p(0,T;L^p(\Omega))}
+\norme{\rho_0^{m_0} \zeta \chi_{\omega}}_{L^p(0,T;L^p(\Omega))}.
\end{equation}
The fact that it is a norm is a consequence of the weighted $L^p$ observability inequality \eqref{eq:ObsLp}. We denote by $\mathcal{Y}$ 
the completion of $\mathcal{Y}_0$ with respect to the norm $\|\cdot\|_{\mathcal{Y}}$.

First, we have the following result that roughly states that a function $\zeta \in \mathcal{Y}$ belongs to some suitable weighted $\mathcal X^p$ spaces.
\begin{Lemma}\label{L1}
Assume $m>m_1$. Then, for any $\zeta\in \mathcal{Y}$,
\begin{equation}\label{15:52}
\left\| \rho_0^{m} \zeta \right\|_{\mathcal{X}^p}  \lesssim \left\| \rho^{m} \zeta \right\|_{\mathcal{X}^p}  \lesssim \norme{\zeta}_{\mathcal{Y}}.
\end{equation}
\end{Lemma}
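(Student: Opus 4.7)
The key idea is to view $w := \rho^{m}\zeta$ as the solution of a backward heat equation with vanishing terminal and boundary data, estimate it by maximal regularity, and then control the resulting source terms with the weighted $L^{p}$ observability of \cref{L4Obs}. By linearity and the definition of $\mathcal{Y}$ as the completion of $\mathcal{Y}_{0}$, it suffices to establish both inequalities for $\zeta\in\mathcal{Y}_{0}$, and then to extend by density. For such a $\zeta$, a direct computation shows that $w := \rho^{m}\zeta$ satisfies
\begin{equation*}
-\partial_{t}w - \Delta w = -(\rho^{m})'\zeta - \rho^{m}(\partial_{t}\zeta + \Delta\zeta) \quad \text{in } (0,T)\times\Omega,
\end{equation*}
together with $w = 0$ on $(0,T)\times\partial\Omega$ and $w(T,\cdot) = 0$, the latter because $\rho(T)=0$.

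Applying the maximal regularity estimate \cref{T03} (with exponent $p$) to the backward equation satisfied by $w$, and using that $m > m_{1} > m_{0}$ together with the comparison inequalities \eqref{eq:comparaisonrhom1m2} to get $\rho^{m}\leq \rho^{m_{0}}$ and $|(\rho^{m})'|\lesssim \rho^{m_{1}}$, I would obtain
\begin{equation*}
\norme{\rho^{m}\zeta}_{\mathcal{X}^{p}} \lesssim \norme{\rho^{m_{1}}\zeta}_{L^{p}(0,T;L^{p}(\Omega))} + \norme{\rho^{m_{0}}(\partial_{t}\zeta+\Delta\zeta)}_{L^{p}(0,T;L^{p}(\Omega))}.
\end{equation*}
The last term is bounded by $\norme{\zeta}_{\mathcal{Y}}$ by definition of the $\mathcal{Y}$-norm, while the first term is bounded by $\norme{\zeta}_{\mathcal{Y}}$ thanks to the weighted $L^{p}$ observability inequality \cref{L4Obs}. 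This proves the right-hand inequality on $\mathcal{Y}_{0}$, and the density of $\mathcal{Y}_{0}$ in $\mathcal{Y}$ combined with the completeness of $\mathcal{X}^{p}$ extends it to all $\zeta\in\mathcal{Y}$.

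For the left-hand inequality, I would write $\rho_{0}^{m}\zeta = \phi(t)(\rho^{m}\zeta)$ with $\phi := (\rho_{0}/\rho)^{m}$. By \eqref{eq:comparaisonrho0rho} one has $0\leq \phi \leq 1$ and $|\phi'| = m(\rho_{0}/\rho)^{m-1}|(\rho_{0}/\rho)'| \lesssim 1$. Since $\phi$ depends only on $t$, $\Delta(\phi\rho^{m}\zeta) = \phi\,\Delta(\rho^{m}\zeta)$ and $\partial_{t}(\phi\rho^{m}\zeta) = \phi'(\rho^{m}\zeta) + \phi\,\partial_{t}(\rho^{m}\zeta)$, so the uniform bounds on $\phi$ and $\phi'$ immediately yield $\norme{\rho_{0}^{m}\zeta}_{\mathcal{X}^{p}} \lesssim \norme{\rho^{m}\zeta}_{\mathcal{X}^{p}}$. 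The only step requiring a bit of care is the density extension from $\mathcal{Y}_{0}$ to $\mathcal{Y}$ and the identification of $\rho^{m}\zeta\in\mathcal{X}^{p}$ as the limit of the Cauchy sequence $(\rho^{m}\zeta_{n})$; this is routine and I do not foresee any serious obstacle beyond it.
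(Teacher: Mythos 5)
Your argument is correct and follows essentially the same route as the paper: write the backward heat equation satisfied by $\rho^{m}\zeta$ with zero terminal and boundary data, bound the right-hand side via $|(\rho^{m})'|\lesssim\rho^{m_1}$, $\rho^{m}\lesssim\rho^{m_0}$, the definition of $\norme{\cdot}_{\mathcal{Y}}$ and the observability inequality \eqref{eq:ObsLp}, apply the maximal regularity of \cref{T03}, and deduce the first inequality from the boundedness of $\rho_0/\rho$ in $W^{1,\infty}(0,T)$. Your explicit mention of the density extension from $\mathcal{Y}_0$ to $\mathcal{Y}$ is a point the paper leaves implicit, but it changes nothing of substance.
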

\begin{proof}
Using $m>m_1$, \eqref{10:09} and \eqref{eq:comparaisonrhom1m2}, we have
\begin{equation}\label{15:37}
\left| \left(\rho^{m}\right)' \right| \lesssim \rho^{m_1},
\quad 
\rho^{m} \lesssim \rho^{m_0}.
\end{equation}
Now, if $\zeta\in \mathcal{Y}$, then 
\begin{equation}
\label{15:38}
\left\{
\begin{array}{rl}
- \partial_t \pare{\rho^{m} \zeta} -  \Delta \pare{\rho^{m} \zeta} 
	= -\left(\rho^{m}\right)' \zeta+ \rho^{m} \left(-\partial_t \zeta-\Delta \zeta\right) &\mathrm{in}\ (0,T)\times\Omega,\\
\pare{\rho^{m}\zeta}= 0&\text{on}\ (0,T)\times\partial\Omega,\\
\pare{\rho^{m}\zeta}(T,\cdot)=0& \text{in}\ \Omega.
\end{array}
\right.
\end{equation}
Combining the observability inequality \eqref{eq:ObsLp} and \eqref{15:37}, we deduce
$$
\left\| -\left(\rho^{m}\right)' \zeta+ \rho^{m} \left(-\partial_t \zeta-\Delta \zeta\right) \right\|_{L^p(0,T;L^p(\Omega))} \lesssim \norme{\zeta}_{\mathcal{Y}}.
$$
Applying the maximal regularity result \cref{T03} on \eqref{15:38} and using the above relation, we deduce the second estimate in \eqref{15:52}. For the first estimate, we use \eqref{eq:comparaisonrho0rho} to obtain that $\norme{\rho_0/\rho}_{W^{1,\infty}(0,T)} \lesssim 1$ and this allows us to conclude the proof.
\end{proof}
%
We now introduce some functional spaces: for $m>0$ and $p\in [1,\infty]$, we set
\begin{equation}
\label{eq:defLpweight}
L^p_{m}(0,T;L^p(\Omega)):= \left\{ f\in L^p(0,T;L^p(\Omega)) \ ; \ \frac{f}{\rho^m}\in L^p(0,T;L^p(\Omega))\right\},
\end{equation}
\begin{equation}
\label{eq:defLpweight2}
L^p_{m,0}(0,T;L^p(\Omega)):= \left\{ f\in L^p(0,T;L^p(\Omega)) \ ; \ \frac{f}{\rho_0^m}\in L^p(0,T;L^p(\Omega))\right\},
\end{equation}
endowed with the following norm
\begin{equation}
\label{eq:defnormLpweight}
\left\| f \right\|_{L^p_{m}(0,T;L^p(\Omega))} := \left\| \frac{f}{\rho^m}\right\|_{L^p(0,T;L^p(\Omega))},
\quad
\left\| f \right\|_{L^p_{m,0}(0,T;L^p(\Omega))} := \left\| \frac{f}{\rho_0^m}\right\|_{L^p(0,T;L^p(\Omega))}.
\end{equation}

Let us consider, for any $y_0 \in L^{p'}(\Omega)$ and $F \in L^{p'}_{m_1}(0,T;L^{p'}(\Omega))$, the functional $J=J_{y_0,F}$ defined as follows:
\begin{multline}
\label{eq:defJ}
J(\zeta) := \frac{1}{p} \iint_{(0,T)\times \Omega} \rho^{m_0 p} (-\partial_t \zeta - \Delta \zeta)^p dt dx 
+ \frac{1}{p} \iint_{(0,T)\times \Omega} \rho_0^{m_0 p} \zeta^p \chi_\omega^p dt dx 
\\
- \iint_{(0,T)\times \Omega} F \zeta dt dx 
- \int_{\Omega} y_0(x) \zeta(0,x) dx.
\end{multline}
Using the $L^p$ observability inequality \eqref{eq:ObsLp}, we can check that $J\in C^1(\mathcal{Y};\mathbb{R})$ is a strictly convex
and coercive functional on $\mathcal{Y}$. In particular, $J$ admits a unique minimum $\overline{\zeta}$.
We can thus define, for $y_0 \in L^{p'}(\Omega)$ and $F \in L^{p'}_{m_1}(0,T;L^{p'}(\Omega))$, the following maps
\begin{equation}\label{15:45}
\mathcal{M}_1(y_0,F):=\overline{\zeta}, 
\quad
\mathcal{M}_2(y_0,F):=\rho^{m_0 p} (-\partial_t \overline{\zeta} - \Delta \overline{\zeta})^{p-1},
\quad
\mathcal{M}_3(y_0,F):=-\rho_0^{m_0 p} \chi_\omega^{p-1} \overline{\zeta}^{p-1}.
\end{equation}
\begin{Proposition}\label{P1}
Assume $p\in 2\mathbb{N}^*$ and $r\in (1,p')$ and let us consider $m_0$ and $m_1$ given by \cref{L4Obs}.
For any $y_0 \in L^{p'}(\Omega)$ and $F \in L^{p'}_{m_1}(0,T;L^{p'}(\Omega))$, let us set
\begin{equation}\label{yh}
\overline{\zeta}=\mathcal{M}_1(y_0,F), \quad
y=\mathcal{M}_2(y_0,F), \quad
h=\mathcal{M}_3(y_0,F). 
\end{equation}
\begin{enumerate}
\item \textbf{Existence of a solution.} 
We have $y \in L^{p'}_{m_0}(0,T;L^{p'}(\Omega))$ and $h\in L^{p'}_{m_0,0}(0,T;L^{p'}(\Omega))$, together with the estimates
\begin{equation}\label{14:58}
\left\| \overline{\zeta} \right\|_{\mathcal{Y}}^p \lesssim 
\left\| F\right\|_{L^{p'}_{m_1}(0,T;L^{p'}(\Omega))}^{p'}
+\left\| y_0 \right\|_{L^{p'}(\Omega)}^{p'},
\end{equation}
\begin{equation}
\label{eq:estimateL43}
\norme{y}_{L^{p'}_{m_0}(0,T;L^{p'}(\Omega))} 
+ \norme{h}_{L^{p'}_{m_0,0}(0,T;L^{p'}(\Omega))} 
\lesssim 
\norme{F}_{L^{p'}_{m_1}(0,T;L^{p'}(\Omega))} + \norme{y_0}_{L^{p'}(\Omega)}.
\end{equation}
Moreover, $y$ is the very weak solution of \eqref{eq:heatSource} associated with $F$, $h$ and $y_0$ in the sense of \cref{D3}.
\item \textbf{Odd behavior of the control.} The control $h$ satisfies $h^{1/(p-1)} \in \mathcal{X}^p$ 
and 
\begin{equation}\label{10:37}
h^{1/(p-1)}(0,\cdot)=h^{1/(p-1)}(T,\cdot)=0 \quad \text{in} \ \Omega,
\end{equation}
together with the estimate
\begin{equation}\label{h1/3}
\norme{h^{1/(p-1)}}_{\mathcal{X}^p} \lesssim 
\norme{F}_{L^{p'}_{m_1}(0,T;L^{p'}(\Omega))}^{1/(p-1)} + \norme{y_0}_{L^{p'}(\Omega)}^{1/(p-1)}.
\end{equation}
\item \textbf{Regularity of the solution.} Assume that $y_0\in W^{2/p,p'}(\Omega)$ and that $y_0=0$ on $\partial\Omega$ if $p=2$.
Then for any $m<m_0$, $y/\rho^{m}\in \mathcal{X}^{p'}$ together with the estimate
\begin{equation}
\label{eq:RegEstimate}
\norme{ \frac{y}{\rho^{m}} }_{\mathcal{X}^{p'}} 
\lesssim 
\norme{F}_{L^{p'}_{m_1}(0,T;L^{p'}(\Omega))} 
+ \norme{y_0}_{W^{2/p,p'}(\Omega)}.
\end{equation}
In particular, $y(T,\cdot) = 0$.
\end{enumerate}
\end{Proposition}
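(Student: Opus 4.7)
The proof follows a HUM-type duality strategy based on minimising $J$. The functional \eqref{eq:defJ} is continuous on the reflexive Banach space $\mathcal{Y}$ (it embeds isometrically into $L^p \times L^p$ via $\zeta\mapsto(\rho^{m_0}(-\partial_t\zeta-\Delta\zeta),\rho_0^{m_0}\zeta\chi_\omega)$); it is strictly convex since $p\in 2\mathbb{N}^*$ forces $s\mapsto s^p$ to be strictly convex; and the weighted $L^p$ observability inequality \eqref{eq:ObsLp}, combined with H\"older,
\begin{equation*}
\abs{\iint F \zeta \, dt\,dx} + \abs{\int_\Omega y_0 \zeta(0,\cdot)\, dx}
\lesssim \left(\norme{F}_{L^{p'}_{m_1}(0,T;L^{p'}(\Omega))} + \norme{y_0}_{L^{p'}(\Omega)}\right) \norme{\zeta}_{\mathcal{Y}},
\end{equation*}
yields coercivity. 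The direct method then produces a unique minimiser $\overline{\zeta}$, and comparing $J(\overline{\zeta})\leq J(0)=0$ with the above estimate gives \eqref{14:58}. Writing the Euler-Lagrange identity $J'(\overline{\zeta})\cdot\zeta=0$ for $\zeta\in\mathcal{Y}_0$ and inserting the definitions \eqref{yh} produces
\begin{equation*}
\iint y\,(-\partial_t\zeta-\Delta\zeta)\,dt\,dx
= \iint (F+h\chi_\omega)\zeta\,dt\,dx + \int_\Omega y_0\,\zeta(0,\cdot)\,dx,
\end{equation*}
which, specialised to $\zeta\in C_c^\infty([0,T)\times\Omega)\subset\mathcal{Y}_0$, is the very weak form of \eqref{eq:heatSource}. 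The estimate \eqref{eq:estimateL43} then follows from the pointwise identity $\abs{y/\rho^{m_0}}^{p'}=\abs{\rho^{m_0}(\partial_t+\Delta)\overline{\zeta}}^{p}$ (using $(p-1)p'=p$) and its analogue for $h$, combined with \eqref{14:58}.

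For Part 2, since $p$ is even, $p-1$ is odd, so the real $(p-1)$-st root of $h$ exists and
\begin{equation*}
h^{1/(p-1)} = -\rho_0^{m_0 p/(p-1)}\,\chi_\omega\,\overline{\zeta}.
\end{equation*}
The condition $r\in(1,p')$ in \cref{L4Obs} gives $m_1 < rm_0 < m_0 p/(p-1)$, so \cref{L1} yields $\rho_0^{m_0 p/(p-1)}\overline{\zeta}\in\mathcal{X}^p$, and multiplication by the smooth compactly supported $\chi_\omega$ keeps the product in $\mathcal{X}^p$. For the boundary conditions \eqref{10:37}, pick $\varepsilon>0$ so small that $m_0 p/(p-1)-\varepsilon>m_1$ and factor $h^{1/(p-1)}=\rho_0^{\varepsilon}\cdot\bigl(-\rho_0^{m_0 p/(p-1)-\varepsilon}\chi_\omega\overline{\zeta}\bigr)$: the second factor is in $C^0([0,T];L^\infty(\Omega))$ by $\mathcal{X}^p\hookrightarrow C^0([0,T];L^\infty(\Omega))$ (\eqref{10:06} with \eqref{eq:defpgrand}), whereas $\rho_0^{\varepsilon}$ vanishes at both endpoints. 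Estimate \eqref{h1/3} is then just \cref{L1} combined with \eqref{14:58}.

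Part 3 is the technical heart of the argument. Fix $m<m_0$ and set $\widetilde{y}:=y/\rho^m$; since $\rho$ depends only on $t$, a direct computation gives
\begin{equation*}
\partial_t \widetilde{y} - \Delta \widetilde{y}
= \rho^{-m}(F+h\chi_\omega) - m\,\rho'\rho^{-m-1}\,y,
\qquad \widetilde{y}(0,\cdot) = y_0/\rho(0)^m,
\end{equation*}
and the point is that each right-hand-side term lies in $L^{p'}((0,T)\times\Omega)$. The first two are handled by $m<m_0<m_1$ and $\rho_0\leq\rho$: they are pointwise bounded, respectively, by $|F|/\rho^{m_1}$ and $|h|/\rho_0^{m_0}$ up to bounded multiplicative factors. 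For the third term, write $\rho'\rho^{-m-1}y=(\rho'\rho^{m_0-m-1})(y/\rho^{m_0})$; the first factor vanishes on $[0,T/2]$ and on $[T/2,T]$ equals $\rho^{m_0-m}(T-2t)/(t(T-t))^2$, which is bounded on $[T/2,T]$ because $\rho^{m_0-m}$ decays super-polynomially as $t\to T$ (cf.\ \eqref{eq:comparaisonrhom1m2}). Applying the maximal regularity result \cref{T03} with exponent $p'$ then gives $\widetilde{y}\in\mathcal{X}^{p'}$ and \eqref{eq:RegEstimate}, and the embedding $\mathcal{X}^{p'}\hookrightarrow C^0([0,T];W^{2/p,p'}(\Omega))$ yields $y(T,\cdot)=\rho(T)^m\,\widetilde{y}(T,\cdot)=0$. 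The main obstacle I expect is exactly this balancing in Part 3: verifying that the singular coefficient $\rho'/\rho$, divergent as $t\to T$, is tamed by the weighted control $y/\rho^{m_0}\in L^{p'}$ already proved in Part 1, so that standard (unweighted) maximal regularity applies without any further bootstrap.
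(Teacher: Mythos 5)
Your proposal is correct and follows essentially the same route as the paper: direct minimisation of $J$ plus the observability inequality for \eqref{14:58}, the Euler--Lagrange identity and the pointwise relations for \eqref{eq:estimateL43} and the very weak formulation, \cref{L1} with $m=p'm_0>m_1$ for the odd control, and the weighted equation for $y/\rho^m$ combined with maximal regularity for Part 3. The only (minor) deviation is that for \eqref{10:37} you invoke $\mathcal{X}^p\hookrightarrow C^0([0,T];L^\infty(\Omega))$, which requires $p>\frac{d+2}{2}$ and is not assumed in the proposition; the paper instead uses the unconditional embedding $\mathcal{X}^p\hookrightarrow C^0([0,T];W^{2/p',p}(\Omega))$ from \eqref{eq:embeddingXpContinuousFrac}, which gives the vanishing at $t=0,T$ for every $p\in 2\mathbb{N}^*$.
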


The first point will be obtained from Euler-Lagrange equation. The odd behavior of the control, i.e. \eqref{h1/3}, remarking that $p-1$ is odd, comes from the identification of $h$ in \eqref{yh}, \eqref{15:45} and from  a weighted $\mathcal X^p$ estimate of $\overline{\zeta}$. 
Finally, the regularity result on the solution comes from a maximal parabolic regularity result. Note that if $p\neq 2$, then $p\geq 4$ and $p'<3/2$ so that we do not need to impose the compatibility condition $y_0=0$ on $\partial\Omega$.
\begin{proof}[Proof of \cref{P1}]
We start by writing the Euler-Lagrange equation for $J$ at $\overline{\zeta}$ to obtain
\begin{multline}
\label{eq:EL}
\iint_{(0,T)\times \Omega} \rho^{m_0 p} (-\partial_t \overline{\zeta} - \Delta \overline{\zeta})^{p-1} (-\partial_t \zeta - \Delta \zeta) dt dx 
+ \iint_{(0,T)\times \Omega} \rho_0^{m_0 p} \chi_\omega^p \overline{\zeta}^{p-1} \zeta  dt dx 
\\
= \iint_{(0,T)\times \Omega} F \zeta dt dx 
+ \int_{\Omega} y_0(x) \zeta(0,x) dx \quad (\zeta\in \mathcal{Y}).
\end{multline}
Taking $\zeta=\overline{\zeta}$ in the above relation and using Young's inequality and the $L^p$ observability inequality \eqref{eq:ObsLp}, we deduce \eqref{14:58}.

Then, \eqref{yh} and \eqref{15:45} yield
$$
\left| \frac{y}{\rho^{m_0}} \right|^{p'}= \rho^{m_0 p} \left| \partial_t \overline{\zeta} + \Delta \overline{\zeta}\right|^{p},
\quad 
\left| \frac{h}{\rho_0^{m_0}} \right|^{p'}= \left| \rho_0^{m_0} \chi_\omega \overline{\zeta}\right|^{p},
$$
and we deduce \eqref{eq:estimateL43} from \eqref{14:58}.

Moreover, \eqref{eq:EL} and \eqref{yh} imply
\begin{multline}
\label{eq:EL-2}
\iint_{(0,T)\times \Omega} y (-\partial_t \zeta - \Delta \zeta) dt dx 
=\iint_{(0,T)\times \Omega} \chi_\omega h  \zeta  dt dx 
+ \iint_{(0,T)\times \Omega} F \zeta dt dx 
\\
+ \int_{\Omega} y_0(x) \zeta(0,x) dx \quad (\zeta\in C^\infty_c([0,T)\times \Omega)),
\end{multline}
that is $y$ is the very weak solution to \eqref{eq:heatSource} associated with the control $h$, $F$ and $y_0$ in the sense of \cref{D3}.

For the second point, from \eqref{yh} and \eqref{15:45}, we have
\begin{equation}
h^{1/(p-1)}= -\rho_0^{p' m_0} \overline{\zeta} \chi_{\omega},
\end{equation}
and since $p'm_0>m_1$, we can apply \cref{L1} with $m=p'm_0$ and 
we deduce \eqref{h1/3} from \eqref{14:58} and \eqref{15:52}.
Since $p'm_0>m_1$, there exists $r>0$ such that $p'm_0-r > m_1$, we then obtain \eqref{10:37} because
$$ \norme{\rho_0^{-r} h^{1/(p-1)}}_{C([0,T];W^{2/p',p}(\Omega))} \lesssim \norme{\rho_0^{-r} h^{1/(p-1)}}_{\mathcal{X}^p} \lesssim \norme{\rho_0^{p'm_0 -r} \overline{\zeta}}_{\mathcal{X}^p} \lesssim \norme{\zeta}_{\mathcal{Y}}.$$

Finally, for the last point, we write the system satisfied by $y/\rho^{m}$:
\begin{equation}
\label{eq:equationyrho}
\left\{
\begin{array}{rl}
\partial_t \left(\frac{y}{\rho^{m}}\right)-  \Delta \left(\frac{y}{\rho^{m}}\right) 
	=  \frac{h}{\rho^{m}}\chi_{\omega} 
	+ \frac{F}{\rho^{m}} 
-m \frac{\rho'}{\rho^{m+1}} y
&\mathrm{in}\ (0,T)\times\Omega,\\[4mm]
\frac{y}{\rho^{m}}= 0&\mathrm{on}\ (0,T)\times\partial\Omega,\\[3mm]
\frac{y}{\rho^{m}}(0,\cdot)=\frac{y_0}{\rho^{m}(0)}& \mathrm{in}\ \Omega.
\end{array}
\right.
\end{equation}
By using $m<m_0<m_1$, \eqref{eq:comparaisonrho0rho}, \eqref{eq:comparaisonrhom1m2} and \eqref{eq:estimateL43}, we have
$$
\norme{
\frac{h}{\rho^{m}}\chi_{\omega} + \frac{F}{\rho^{m}} -m \frac{\rho'}{\rho^{m+1}} y
}_{L^{p'}(0,T;L^{p'}(\Omega))}
\lesssim \norme{F}_{L^{p'}_{m_1}(0,T;L^{p'}(\Omega))} + \norme{y_0}_{L^{p'}(\Omega)}
$$
%
Applying \cref{T03} to \eqref{eq:equationyrho} with the above estimate,
we deduce the regularity estimate on $y$, i.e. \eqref{eq:RegEstimate}.
\end{proof}

\subsection{$L^{\infty}$ bound on the control and $L^q$ estimate of the nonlinearity}

From now on, we assume $r\in (1,p')$ and we assume that $m_0$ and $m_1$ are given by \cref{L4Obs} with this $r$. In particular
they satisfy \eqref{10:09} which yields
$$
0<m_0 p -m_1(p-1)<m_0.
$$
First we have the following result on the control $h$.
\begin{Lemma}\label{L7}
Assume $p$ satisfies \eqref{eq:defpOdd} and \eqref{eq:defpgrand}. 
Then for any 
\begin{equation}\label{20:22}
0 \leq m < m_0 p -m_1(p-1),
\end{equation}
the control $h$ given by \eqref{yh} satisfies $h^{1/(2n+1)}\in \mathcal{X}^p$ and $h\in L_{m,0}^{\infty}(0,T;L^\infty(\Omega))$ with the estimate
\begin{equation}\label{16:51}
\left\| h \right\|_{L_{m,0}^{\infty}(0,T;L^\infty(\Omega))} 
+\norme{\left(\frac{h}{\rho_0^{m} }\right)^{1/(2n+1)} }_{\mathcal X^p}^{2n+1}
\lesssim
\left\| F\right\|_{L^{p'}_{m_1}(0,T;L^{p'}(\Omega))}
+\left\| y_0 \right\|_{L^{p'}(\Omega)}.
\end{equation}
\end{Lemma}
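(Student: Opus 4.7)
The plan is to reduce the statement about \(h^{1/(2n+1)}\) to the corresponding one about \(h^{1/(p-1)}\) already handled in \Cref{P1}. The crucial algebraic observation is that \eqref{eq:defpOdd} gives \(p-1 = (2n+1)(2k+1)\); since both \(2n+1\) and \(2k+1\) are odd,
\begin{equation*}
h^{1/(2n+1)} = \bigl(h^{1/(p-1)}\bigr)^{2k+1}.
\end{equation*}
From \Cref{P1}, \(h^{1/(p-1)} = -\rho_0^{p'm_0}\chi_\omega \overline{\zeta}\), and \Cref{L1} yields, for every \(\sigma \in (0,\, p'm_0 - m_1)\),
\begin{equation*}
\bigl\|\rho_0^{-\sigma} h^{1/(p-1)}\bigr\|_{\mathcal{X}^p} = \bigl\|\rho_0^{\,p'm_0 - \sigma}\chi_\omega\overline{\zeta}\bigr\|_{\mathcal{X}^p} \lesssim \|\overline{\zeta}\|_{\mathcal{Y}},
\end{equation*}
after noting that \(\chi_\omega \in C^\infty(\overline{\Omega})\) acts by multiplication on \(\mathcal{X}^p\).

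The next step is to exploit the hypothesis \(m < m_0 p - m_1(p-1) = (p-1)(p'm_0 - m_1)\) to choose \(\sigma \in (0,\, p'm_0 - m_1)\) satisfying \((p-1)\sigma > m\), equivalently \((2k+1)\sigma - m/(2n+1) > 0\). For such \(\sigma\), I would write the factorization
\begin{equation*}
\Bigl(\frac{h}{\rho_0^{m}}\Bigr)^{1/(2n+1)} = \rho_0^{(2k+1)\sigma - m/(2n+1)}\bigl(\rho_0^{-\sigma} h^{1/(p-1)}\bigr)^{2k+1},
\end{equation*}
which displays the target quantity as a nonnegative power of \(\rho_0\) times the \((2k+1)\)-th power of a function whose \(\mathcal{X}^p\)-norm is already controlled by \(\|\overline{\zeta}\|_{\mathcal{Y}}\).

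Verifying that this lies in \(\mathcal{X}^p\) is then routine: \eqref{eq:defpgrand} and \Cref{L01} make \(\mathcal{X}^p\) an algebra, so raising to the \((2k+1)\)-th power costs \((2k+1)\) factors of the norm; multiplication by \(\rho_0^{\alpha}\) with \(\alpha > 0\) is continuous on \(\mathcal{X}^p\) because \(\rho_0 \in C^\infty([0,T])\) vanishes to infinite order at \(t=0\) and \(t=T\). Combined with \eqref{14:58}, this gives
\begin{equation*}
\norme{(h/\rho_0^{m})^{1/(2n+1)}}_{\mathcal{X}^p}^{2n+1} \lesssim \|\overline{\zeta}\|_{\mathcal{Y}}^{(2k+1)(2n+1)} = \|\overline{\zeta}\|_{\mathcal{Y}}^{\,p-1} \lesssim \bigl(\|F\|_{L^{p'}_{m_1}(0,T;L^{p'}(\Omega))}^{p'} + \|y_0\|_{L^{p'}(\Omega)}^{p'}\bigr)^{(p-1)/p},
\end{equation*}
and the identity \((p-1)/p = 1/p'\) converts the right-hand side to the desired \(\|F\|_{L^{p'}_{m_1}(0,T;L^{p'}(\Omega))} + \|y_0\|_{L^{p'}(\Omega)}\). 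The \(L^\infty\) part of \eqref{16:51} then follows instantly from the embedding \(\mathcal{X}^p \hookrightarrow L^\infty((0,T)\times\Omega)\) of \eqref{10:06}, applied to \((h/\rho_0^m)^{1/(2n+1)}\) and raised to the \((2n+1)\)-th power.

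The main obstacle, really a bookkeeping one, lies in the choice of \(\sigma\): it has to stay strictly below \(p'm_0 - m_1\) (so that \Cref{L1} applies to \(\rho_0^{-\sigma}h^{1/(p-1)}\)) while simultaneously making \((p-1)\sigma > m\) (so that the residual weight in the factorization has a nonnegative exponent). The range of admissible \(m\) in \eqref{20:22} is exactly the compatibility condition ensuring that such a \(\sigma\) exists, and it is this careful weight accounting, powered by the arithmetic identity \(p-1 = (2n+1)(2k+1)\), that makes a single bootstrap suffice where \cite{KLB} needed two.
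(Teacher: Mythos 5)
Your proof is correct and follows essentially the same route as the paper's: the identity $p-1=(2n+1)(2k+1)$, the representation $h^{1/(p-1)}=-\rho_0^{p'm_0}\chi_\omega\overline{\zeta}$, \cref{L1}, the algebra property of $\mathcal{X}^p$, \eqref{14:58}, and the embedding into $L^\infty$. The only cosmetic difference is your free parameter $\sigma$ (with a residual positive power of $\rho_0$ to absorb), whereas the paper takes the exact exponent $\sigma=m/(p-1)$, writing $h/\rho_0^{m}=-\bigl(\rho_0^{(m_0p-m)/(p-1)}\chi_\omega\overline{\zeta}\bigr)^{p-1}$ directly.
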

In the above result, $p$ has to be sufficiently large to get that $\mathcal{X}^p$ is an algebra, and this enables us to get that $h^{1/(2n+1)}$ is sufficiently smooth, because $p-1 = (2n+1)(2k+1)$, as expected in \eqref{eq:oddcontrol}.
\begin{proof}[Proof of \cref{L7}]
Since $p$ satisfies \eqref{eq:defpgrand}, we can apply \cref{L01} and deduce that $\mathcal{X}^p$ is an algebra. 
On the other hand, from \cref{P1}, $h^{1/(2n+1)}\in \mathcal{X}^p$, we can thus conclude by using that
$$h^{1/(2n+1)}=\left(h^{1/(p-1)}\right)^{2k+1}.$$ 

Now, from \eqref{yh} and \eqref{15:45}, we can write
\begin{equation}
\label{eq:intermediateequalityhLemmaLinfty}
\frac{h}{\rho_0^{m}}=- \left(\rho_0^{(m_0 p-m)/(p-1)} \chi_\omega \overline{\zeta}\right)^{p-1}.
\end{equation}

If $m$ satisfies \eqref{20:22}, then $(m_0 p-m)/(p-1) > m_1$, we can apply \cref{L1} and use \eqref{eq:intermediateequalityhLemmaLinfty}, 
\eqref{eq:comparaisonrho0rho}, \eqref{14:58} to obtain
\begin{multline*}
\norme{\frac{h}{\rho_0^{m}}}_{\mathcal X^p} 
\lesssim \norme{\left(\rho_0^{(m_0 p-m)/(p-1)} \chi_\omega \overline{\zeta}\right)^{p-1}}_{\mathcal X^p} 
\lesssim \norme{\rho_0^{(m_0 p-m)/(p-1)} \chi_\omega \overline{\zeta}}_{\mathcal X^p}^{p-1}\\
\lesssim \norme{\overline{\zeta}}_{\mathcal Y}^{p-1} \lesssim  \left\| F\right\|_{L^{p'}_{m_1}(0,T;L^{p'}(\Omega))}
+\left\| y_0 \right\|_{L^{p'}(\Omega)}.
\end{multline*}
We obtain \eqref{16:51} by using that $\mathcal{X}^p$ is an algebra and \eqref{XpLq}.
\end{proof}

\begin{Proposition}\label{P7}
Let $N\in \mathbb{N}^*$, $N\geq 2$ and assume $p,q$ satisfying $q\geq p'$, \eqref{11:44}, \eqref{eq:defpOdd} and \eqref{eq:defpgrand}. 
Let us consider $m_0$ and $m_1$ given by \cref{L4Obs} with 
\begin{equation}\label{15:51}
r:=\frac{p}{p-1+\frac{1}{N}}\in (1,p').
\end{equation}
For any $y_0\in W^{\frac{2}{q'},q}_0(\Omega)$ and $F\in L^q_{m_1}(0,T;L^q(\Omega))$, 
and for any $m$ satisfying \eqref{20:22}, $y$ defined by \eqref{yh}
satisfies $y/\rho^{m} \in \mathcal X^q$ and $y^N\in L^q_{m_1}(0,T;L^q(\Omega))$ with the estimates 
\begin{equation}\label{16:59-Result}
\norme{ \frac{y}{\rho^{m}} }_{\mathcal{X}^{q}} 
\lesssim 
\norme{F}_{L^{q}_{m_1}(0,T;L^{q}(\Omega))} 
+ \norme{y_0}_{W^{2/q',q}(\Omega)},
\end{equation}
\begin{equation}\label{17:05}
\left\| y^N \right\|_{L^q_{m_1}(0,T;L^q(\Omega))}
\lesssim 
\left(
\norme{F}_{L^{q}_{m_1}(0,T;L^{q}(\Omega))} 
+ \norme{y_0}_{W^{2/q',q}(\Omega)}\right)^N.
\end{equation}
\end{Proposition}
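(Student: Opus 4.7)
The approach is a bootstrap in $L^q$ for the equation satisfied by $y/\rho^{\tilde m}$, using \cref{P1}\,(3) as the base case, and the Sobolev embedding \eqref{XpLq} combined with the maximal regularity of \cref{T03} at each induction step. Estimate \eqref{17:05} will then drop out of the embedding $\mathcal X^q\hookrightarrow L^{Nq}$ provided by \cref{L01}\,(2), exploiting the very choice of $r$ in \eqref{15:51}. Fix $m$ satisfying \eqref{20:22}. For any $\tilde m<m_0$, dividing \eqref{eq:heatSource} by $\rho^{\tilde m}$ (in the very weak sense of \cref{D3}) yields
\begin{equation*}
\partial_t\!\left(\frac{y}{\rho^{\tilde m}}\right)-\Delta\!\left(\frac{y}{\rho^{\tilde m}}\right)
=\chi_\omega\,\frac{h}{\rho^{\tilde m}}+\frac{F}{\rho^{\tilde m}}-\tilde m\,\frac{\rho'}{\rho^{\tilde m+1}}\,y,
\end{equation*}
with zero Dirichlet trace and initial value proportional to $y_0$. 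The first two right-hand-side terms are straightforward: since $\rho_0\leq\rho$, \cref{L7} gives $\|\chi_\omega h/\rho^{\tilde m}\|_{L^\infty(L^\infty)}\lesssim\|F\|_{L^{p'}_{m_1}}+\|y_0\|_{L^{p'}}\lesssim \|F\|_{L^q_{m_1}}+\|y_0\|_{W^{2/q',q}}$ (using $q\geq p'$ and the finite measure of $(0,T)\times\Omega$), and $\tilde m<m_1$ implies $|F/\rho^{\tilde m}|\leq |F/\rho^{m_1}|\in L^q(L^q)$. The delicate one is the drift: applying \eqref{eq:comparaisonrhom1m2} to $\rho^1$ gives $|\rho'|\lesssim\rho^{1-\eta}$ for any $\eta>0$, whence $|\tilde m\rho' y/\rho^{\tilde m+1}|\lesssim |y|/\rho^{\tilde m+\eta}$, absorbable only at the cost of a small weight loss at each bootstrap step.

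The induction is then organised as follows. Pick a finite decreasing sequence of weights $m_0>\tilde m_0>\tilde m_1>\cdots>\tilde m_K>m$ and an increasing sequence of exponents $p'=q_0\leq q_1\leq\cdots\leq q_K=q$ defined by $1/q_{k+1}=\max\{1/q_k-2/(d+2),\,1/q\}$; because $1/p'-1/q$ is finite, $K$ steps suffice. The base case $y/\rho^{\tilde m_0}\in\mathcal X^{p'}$ together with its quantitative bound is given by \cref{P1}\,(3); the hypothesis on the initial data is met because $y_0\in W^{2/q',q}_0(\Omega)$ embeds into $W^{2/p,p'}(\Omega)$ when $q\geq p'$, and since $p\geq 4$ no boundary compatibility is required. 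Assuming $y/\rho^{\tilde m_k}\in\mathcal X^{q_k}$, the embedding \eqref{XpLq} upgrades this to $y/\rho^{\tilde m_k}\in L^{q_{k+1}}(L^{q_{k+1}})$; choosing $\eta\in(0,\tilde m_k-\tilde m_{k+1})$, the drift term of the equation for $y/\rho^{\tilde m_{k+1}}$ is pointwise dominated by $|y/\rho^{\tilde m_k}|$, so the whole right-hand side lies in $L^{q_{k+1}}(L^{q_{k+1}})$ with norm $\lesssim\|F\|_{L^q_{m_1}}+\|y_0\|_{W^{2/q',q}}$. Maximal regularity (\cref{T03} with $p=q_{k+1}$) produces a strong solution in $\mathcal X^{q_{k+1}}$, which uniqueness for the very weak problem (\cref{D3}) identifies with $y/\rho^{\tilde m_{k+1}}$. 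After $K$ steps, $y/\rho^{\tilde m_K}\in\mathcal X^q$, and since $\tilde m_K>m$ and $\rho^{\tilde m_K-m}\in W^{1,\infty}(0,T)$, multiplication gives $y/\rho^m\in\mathcal X^q$ together with \eqref{16:59-Result}.

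For \eqref{17:05}, the specific value $r=p/(p-1+1/N)$ in \eqref{15:51} together with \eqref{10:09} reads $m_1(p-1+1/N)<m_0p$, i.e.\ $m_1/N<m_0p-m_1(p-1)$, so $m:=m_1/N$ satisfies \eqref{20:22}. Assumption \eqref{11:44} is exactly what \cref{L01}\,(2) demands to yield the continuous embedding $\mathcal X^q\hookrightarrow L^{Nq}((0,T)\times\Omega)$, hence
\begin{equation*}
\left\|\frac{y^N}{\rho^{m_1}}\right\|_{L^q(L^q)}=\left\|\frac{y}{\rho^{m_1/N}}\right\|_{L^{Nq}((0,T)\times\Omega)}^N\lesssim \left\|\frac{y}{\rho^{m_1/N}}\right\|_{\mathcal X^q}^N,
\end{equation*}
and applying \eqref{16:59-Result} with $m=m_1/N$ concludes. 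The main obstacle I anticipate is the bookkeeping of weights along the bootstrap: one must budget the small losses $\eta$ forced by the unbounded factor $\rho'/\rho$ so that they telescope to strictly less than the gap $\tilde m_0-m<m_0-m$, which is why the induction cannot be run with a single fixed weight throughout.
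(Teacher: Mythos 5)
Your proposal is correct and follows essentially the same route as the paper: a maximal-regularity bootstrap in $L^{q_k}$ seeded by \cref{P1}(3), with the drift term $\rho' y/\rho^{\tilde m+1}$ absorbed through a small increase of the weight, and \eqref{17:05} deduced from the embedding $\mathcal{X}^q\hookrightarrow L^{Nq}$ of \cref{L01} once the choice \eqref{15:51} of $r$ is seen to make $m=m_1/N$ admissible in \eqref{20:22}. The one constraint to tighten is on your weight sequence: you only impose $\tilde m_k<m_0$, but controlling $\chi_\omega h/\rho^{\tilde m_{k+1}}$ in $L^\infty$ via \cref{L7} requires $\tilde m_{k+1}$ to satisfy \eqref{20:22}, i.e. $\tilde m_{k+1}<m_0p-m_1(p-1)$, which is strictly smaller than $m_0$; since $m$ itself satisfies \eqref{20:22}, placing the whole sequence inside $\left(m,\,m_0p-m_1(p-1)\right)$ fixes this and the argument goes through unchanged. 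The paper organizes the same induction slightly differently — it keeps the target weight $m$ fixed in the equation at every step and only invokes the previously established estimate at an auxiliary weight $\tilde m\in(m,m_0)$ to handle the drift — but this is a presentational rather than a mathematical difference.
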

The goal of the above result is to get an appropriate $L^q$ bound on the nonlinearity, this would be a first step in order to prove the local null-controllability of the semi-linear heat equation.
\begin{proof}
We define $q_1$ as follows
\begin{equation}\label{22:28}
\text{if}\quad \frac{1}{q}\leq \frac{1}{p'}-\frac{2}{d+2}, \ \text{then} \ q_1 = q, \  \text{else}
\ \frac{1}{q_1}= \frac{1}{p'}-\frac{2}{d+2}.
\end{equation}
In both cases, we have $q\geq q_1$ and $1/q'\geq 1/q_1'$. 

We deduce from \eqref{eq:RegEstimate} and the Sobolev embedding \eqref{XpLq} that for any
$\widetilde{m}<m_0$,
\begin{equation}\label{16:45-a}
\norme{ y }_{L^{q_1}_{\widetilde{m}}(0,T;L^{q_1}(\Omega))} 
\lesssim 
\norme{F}_{L^{p'}_{m_1}(0,T;L^{p'}(\Omega))} 
+ \norme{y_0}_{W^{2/p,p'}(\Omega)}.
\end{equation}
We then consider $m$ satisfying \eqref{20:22}. We have in particular $m<m_0<m_1$ and we can write 
\begin{equation*}
\left\{
\begin{array}{rl}
\partial_t \left(\frac{y}{\rho^{m}}\right)-  \Delta \left(\frac{y}{\rho^{m}}\right) 
	=  \frac{h}{\rho^{m}}\chi_{\omega} 
	+ \frac{F}{\rho^{m}} 
-m \frac{\rho'}{\rho^{m+1}} y
&\mathrm{in}\ (0,T)\times\Omega,\\[4mm]
\frac{y}{\rho^{m}}= 0&\mathrm{on}\ (0,T)\times\partial\Omega,\\[3mm]
\frac{y}{\rho^{m}}(0,\cdot)=\frac{y_0}{\rho^{m}(0)}& \mathrm{in}\ \Omega.
\end{array}
\right.
\end{equation*}
Applying \cref{T03} on the above equation and using \eqref{16:51} and \eqref{16:45-a} with $\tilde{m} \in (m,m_0)$ together with \eqref{eq:comparaisonrho0rho}, \eqref{eq:comparaisonrhom1m2}, we deduce
\begin{multline}\label{16:55}
\norme{ \frac{y}{\rho^{m}} }_{\mathcal{X}^{q_1}} 
\lesssim \left\| h \right\|_{L_{m}^{\infty}(0,T;L^\infty(\Omega))} + 
\norme{F}_{L^{q_1}_{m_1}(0,T;L^{q_1}(\Omega))} + \norme{\frac{\rho'}{\rho^{m+1}}y}_{L^{q_1}(0,T;L^{q_1}(\Omega))}
\\
\lesssim \norme{F}_{L^{q_1}_{m_1}(0,T;L^{q_1}(\Omega))}
+ \norme{y_0}_{W^{2/q_1',q_1}(\Omega)} + \norme{ y }_{L^{q_1}_{\widetilde{m}}(0,T;L^{q_1}(\Omega))} 
\\
\lesssim \norme{F}_{L^{q_1}_{m_1}(0,T;L^{q_1}(\Omega))} + \norme{y_0}_{W^{2/q',q}(\Omega)}.
\end{multline}
We can proceed by induction, using again \eqref{XpLq},
and since the corresponding sequence $1/q_n$ decreases by $2/(d+2)$ (see \eqref{22:28}) at each step, we obtain
after a finite number of steps that for any $m$ satisfying \eqref{20:22}, we have 
\begin{equation}\label{16:59}
\norme{ \frac{y}{\rho^{m}} }_{\mathcal{X}^{q}} 
\lesssim 
\norme{F}_{L^{q}_{m_1}(0,T;L^{q}(\Omega))} 
+ \norme{y_0}_{W^{2/q',q}(\Omega)}.
\end{equation}
Using that $q$ satisfies \eqref{11:44} so that the Sobolev embedding \eqref{eq:embeddingXqLnq} holds, we deduce that
\begin{equation}\label{17:02}
\norme{ \frac{y^N}{\rho^{N m}} }_{L^{q}((0,T)\times \Omega)} 
\lesssim 
\left(
\norme{F}_{L^{q}_{m_1}(0,T;L^{q}(\Omega))} 
+ \norme{y_0}_{W^{2/q',q}(\Omega)}\right)^N.
\end{equation}
From \eqref{10:09} and \eqref{15:51}, we have
\begin{equation*}\label{09:44}
\frac{m_1}{N}<m_0 p -m_1(p-1)
\end{equation*}
so that we can take $m=m_1/N$ in \eqref{16:59}, \eqref{17:02} and we deduce \eqref{17:05}.
\end{proof}

\subsection{A Schauder fixed-point argument}
\label{Sec:schauder}
Let us consider the hypotheses of \cref{P7} and assume $y_0\in W_0^{2/q',q}(\Omega)$. Then, using the conclusion of
\cref{P7}, we can define the mapping
\begin{equation}\label{09:58}
\mathcal{N} : L^{q}_{m_1}(0,T;L^{q}(\Omega)) \to L^{q}_{m_1}(0,T;L^{q}(\Omega)), \quad F\mapsto y^N,
\end{equation}
where $y=\mathcal{M}_2(y_0,F)$.
Moreover, using  \eqref{17:05}, we deduce that
if $R_0:=\norme{y_0}_{W^{2/q',q}(\Omega)}$ is small enough, then the closed set
\begin{equation}\label{10:00}
B_{R_0}:= \left\{ F\in L^{q}_{m_1}(0,T;L^{q}(\Omega))  \ ; \ \left\| F \right\|_{L^q_{m_1}(0,T;L^q(\Omega))}\leq R_0 \right\}
\end{equation}
is invariant by $\mathcal{N}$.

\begin{Proposition}\label{L3}
The mapping $\mathcal{N} : B_{R_0} \to B_{R_0}$ defined above is continuous and $\mathcal{N}(B_{R_0})$ is relatively compact into $B_{R_0}$.
\end{Proposition}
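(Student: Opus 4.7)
The plan is to prove compactness and continuity separately, both reducing to the regularity estimate of \cref{P7} composed with the compact embedding of \cref{L01}.

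\textbf{Relative compactness of $\mathcal{N}(B_{R_0})$.} For $F\in B_{R_0}$, I set $y=\mathcal{M}_2(y_0,F)$. From \eqref{10:09} and \eqref{15:51} one checks that $m_1/N < m_0 p - m_1 (p-1)$, so $m_1/N$ satisfies \eqref{20:22}. Applying \cref{P7} with $m=m_1/N$ yields the uniform bound $\left\|y/\rho^{m_1/N}\right\|_{\mathcal{X}^q} \lesssim R_0 + \left\|y_0\right\|_{W^{2/q',q}(\Omega)}$. The assumption \eqref{11:44} on $q$ is precisely the hypothesis of \cref{L01}(2), so the embedding $\mathcal{X}^q \hookrightarrow L^{Nq}((0,T)\times\Omega)$ is compact. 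Hence $\left\{y/\rho^{m_1/N}:F\in B_{R_0}\right\}$ is relatively compact in $L^{Nq}$, and composing with the continuous Nemytskii map $z\mapsto z^N : L^{Nq}\to L^q$ shows that $\mathcal{N}(B_{R_0})$ is relatively compact in $L^{q}_{m_1}(0,T;L^{q}(\Omega))$.

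\textbf{Continuity via strong convergence of the dual variables.} Given $F_n\to F$ in $L^q_{m_1}$, I will first prove that $\overline{\zeta}_n:=\mathcal{M}_1(y_0,F_n)$ converges strongly to $\overline{\zeta}:=\mathcal{M}_1(y_0,F)$ in $\mathcal{Y}$. Since $(0,T)\times\Omega$ is bounded and $q\geq p'$, we also have $F_n\to F$ in $L^{p'}_{m_1}$. By \eqref{14:58}, $(\overline{\zeta}_n)$ is bounded in $\mathcal{Y}$, and since $\mathcal{Y}$ embeds isometrically as a closed subspace of $L^p\times L^p$ through $\zeta\mapsto \left(\rho^{m_0}(\partial_t\zeta+\Delta\zeta),\ \rho_0^{m_0}\chi_\omega\zeta\right)$, it is reflexive and uniformly convex. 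After extracting a weakly convergent subsequence with limit $\overline{\zeta}^*$, I would use weak lower semicontinuity of the two $p$-power terms in $J$, continuity of the trace $\zeta\mapsto\zeta(0)$ from $\mathcal{Y}$ into $L^p(\Omega)$ (a consequence of \eqref{eq:ObsLp}), and the bilinear bound $\left|\iint F\zeta\right|\leq \left\|F\right\|_{L^{p'}_{m_1}}\left\|\zeta\right\|_{\mathcal{Y}}$ to pass to the limit in the minimization inequality $J_{y_0,F_n}(\overline{\zeta}_n)\leq J_{y_0,F_n}(\zeta)$ and identify $\overline{\zeta}^*$ with the unique minimizer $\overline{\zeta}$ of $J_{y_0,F}$. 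Comparing $J_{y_0,F_n}(\overline{\zeta}_n)\leq J_{y_0,F_n}(\overline{\zeta})\to J_{y_0,F}(\overline{\zeta})$ with the weak lower semicontinuity bound then forces $\left\|\overline{\zeta}_n\right\|_{\mathcal{Y}}^p\to\left\|\overline{\zeta}\right\|_{\mathcal{Y}}^p$, and the Radon--Riesz property of uniformly convex spaces upgrades the weak convergence to the desired strong convergence in $\mathcal{Y}$.

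\textbf{Passage to the limit and conclusion.} From $y_n = \rho^{m_0 p}(-\partial_t\overline{\zeta}_n - \Delta\overline{\zeta}_n)^{p-1}$, the strong convergence of $\rho^{m_0}(\partial_t\overline{\zeta}_n+\Delta\overline{\zeta}_n)$ in $L^p$ together with the continuity of $z\mapsto z^{p-1} : L^p\to L^{p'}$ will give $y_n\to y$ in $L^{p'}_{m_0}$, hence in $\mathcal{D}'((0,T)\times\Omega)$. Combined with the relative compactness of $\left\{y_n/\rho^{m_1/N}\right\}$ in $L^{Nq}$ established in the first step, every subsequence admits a further subsequence converging in $L^{Nq}$, with limit necessarily equal to $y/\rho^{m_1/N}$; hence the whole sequence converges, and continuity of $z\mapsto z^N : L^{Nq}\to L^q$ delivers $\mathcal{N}(F_n)=y_n^N\to y^N=\mathcal{N}(F)$ in $L^q_{m_1}$. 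The main obstacle will be the strong convergence $\overline{\zeta}_n\to\overline{\zeta}$ in $\mathcal{Y}$: it is here that one must carefully orchestrate the reflexivity and uniform convexity of $\mathcal{Y}$ with the observability inequality, the weight estimates, and the weak-strong duality between $L^{p'}_{m_1}$ and $\mathcal{Y}$ in order to pass to the limit in the affine-plus-convex functional $J$.
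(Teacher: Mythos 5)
Your proof is correct, and while the compactness half coincides with the paper's argument (the uniform bound from \cref{P7} with $m=m_1/N$, the compact embedding \eqref{eq:embeddingXqLnq}, and the continuity of $z\mapsto z^N$ from $L^{Nq}$ to $L^q$), the continuity half takes a genuinely different route. The paper subtracts the two Euler--Lagrange equations \eqref{eq:EL} for $J_{y_0,F_1}$ and $J_{y_0,F_2}$, tests with $\overline{\zeta}_1-\overline{\zeta}_2$, and exploits the elementary convexity inequality $(x_1-x_2)^p\lesssim (x_1^{p-1}-x_2^{p-1})(x_1-x_2)$ together with \eqref{eq:ObsLp}; this yields the quantitative bound $\|\overline{\zeta}_1-\overline{\zeta}_2\|_{\mathcal{Y}}^p\lesssim\|F_1-F_2\|_{L^{p'}_{m_1}}^{p'}$, propagates it to $h_1-h_2$ and $y_1-y_2$ via the maximal-regularity bootstrap, and culminates in the explicit $1/(p-1)$-H\"older estimate \eqref{eq:proofholdercontinuitynonlinear}, which the authors then invoke in the subsequent Remark to explain why a Banach fixed-point argument is not available. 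Your argument is instead qualitative: weak compactness of the minimizers in $\mathcal{Y}$ (viewed as a closed subspace of $L^p\times L^p$), $\Gamma$-convergence-type identification of the weak limit with the minimizer of $J_{y_0,F}$, convergence of the energies, and the Radon--Riesz property to upgrade to strong convergence, followed by the subsequence/uniqueness-of-limit trick to conclude. This is a valid and somewhat more conceptual path that suffices for Schauder, at the price of losing the modulus of continuity; one small imprecision is that $\mathcal{Y}$ carries the \emph{sum} of two $L^p$ norms, which is not uniformly convex as stated, but your conclusion survives because weak lower semicontinuity of each component norm combined with convergence of the sum (or of the sum of $p$-th powers appearing in $J$) forces norm convergence of each component separately, and the Radon--Riesz property of each $L^p$ factor then gives strong convergence componentwise, hence in $\mathcal{Y}$.
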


\begin{proof}
Let us consider a sequence $\left( F_n\right)_n$ of $B_{R_0}$. We write $y_n=\mathcal{M}_2(y_0,F_n)$.
Then we can use \eqref{16:59-Result} to obtain that 
$\left(y_n/\rho^{(m_1/N)} \right)_n$ is bounded in $\mathcal{X}^{q}$. Applying \cref{L01}, we deduce that, up to a subsequence, 
$$
\frac{y_n}{\rho^{(m_1/N)}} \to \frac{y}{\rho^{(m_1/N)}} \quad \text{in} \ L^{qN}((0,T)\times \Omega), 
$$
for some $y\in L^{qN}_{m_1/N}(0,T; L^{qN}(\Omega))$.
We deduce that $\mathcal{N}(B_{R_0})$ is relatively compact into $B_{R_0}$.

To show the continuity of $\mathcal{N}$, we consider $F_1,F_2\in B_{R_0}$ and we write (see \eqref{15:45} and \eqref{yh}) for $i=1,2$,
$$
\overline{\zeta}_i := \mathcal{M}_1(y_0,F_i), 
\quad
y_i:=\mathcal{M}_2(y_0,F_i),
\quad
h_i:=\mathcal{M}_3(y_0,F_i).
$$
From the Euler-Lagrange equation \eqref{eq:EL} for $J_{y_0,F_1}$ and $J_{y_0,F_2}$, we deduce
\begin{multline}
\label{eq:EL2}
\iint_{(0,T)\times \Omega} \rho^{m_0 p} 
\left[  (-\partial_t \overline{\zeta}_1 - \Delta \overline{\zeta}_1)^{p-1}
-(-\partial_t \overline{\zeta}_2 - \Delta \overline{\zeta}_2)^{p-1} \right](-\partial_t \zeta - \Delta \zeta) dt dx 
\\
+ \iint_{(0,T)\times \Omega} \rho_0^{m_0 p} \chi_\omega^p \left( \overline{\zeta}_1^{p-1}-\overline{\zeta}_2^{p-1}\right) \zeta  dt dx 
= \iint_{(0,T)\times \Omega} \left( F_1-F_2\right) \zeta dt dx 
\quad (\zeta\in \mathcal{Y}).
\end{multline}
In the above relation, we take $\zeta=\overline{\zeta}_1-\overline{\zeta}_2$ in the above relation and we combine it with the observability inequality 
\eqref{eq:ObsLp} and with the relation
$$
(x_1-x_2)^p\lesssim (x_1^{p-1}-x_2^{p-1})(x_1-x_2) \quad (x_1,x_2\in \mathbb{R}),
$$
to deduce
\begin{equation}\label{14:58-bis}
\left\| \overline{\zeta}_1-\overline{\zeta}_2 \right\|_{\mathcal{Y}}^p \lesssim 
\left\| F_1-F_2\right\|_{L^{p'}_{m_1}(0,T;L^{p'}(\Omega))}^{p'}.
\end{equation}
Moreover, using that
$$
\left| x_1^{p-1}-x_2^{p-1}\right| \lesssim 
\left| x_1-x_2\right| (\left|x_1\right|^{p-2} + \left|x_2\right|^{p-2}) \quad (x_1,x_2\in \mathbb{R}),
$$
we obtain from \eqref{eq:comparaisonrho0rho}
\begin{multline*}
\left| \frac{h_1-h_2}{\rho^{m}} \right|
=\left(\frac{\rho_0}{\rho}\right)^{m} \left| 
 \left(\rho^{(m_0 p-m)/(p-1)} \chi_\omega \overline{\zeta}_1\right)^{p-1}
 -\left(\rho^{(m_0 p-m)/(p-1)} \chi_\omega \overline{\zeta}_2\right)^{p-1}
 \right|
 \\
\lesssim
\left| \rho^{(m_0 p-m)/(p-1)}  \left( \overline{\zeta}_1-\overline{\zeta}_2\right) \right|
\left( \left(\rho^{(m_0 p-m)/(p-1)}  \overline{\zeta}_1\right)^{p-2}
 +\left(\rho^{(m_0 p-m)/(p-1)}  \overline{\zeta}_2\right)^{p-2}
 \right).
\end{multline*}
Thus, if $m$ satisfies \eqref{20:22}, the above relation combined with \eqref{eq:comparaisonrho0rho}, \eqref{eq:defpgrand} that guarantees that $\mathcal X^p$ is an algebra and \cref{L1} yields
\begin{multline*}
\left\| h_1-h_2 \right\|_{L^\infty_{m}(0,T;L^\infty(\Omega))}
\lesssim
\left\| \rho^{(m_0 p-m)/(p-1)}  \left( \overline{\zeta}_1-\overline{\zeta}_2\right)\right\|_{\mathcal{X}^p}
\left( \left\| \rho^{(m_0 p-m)/(p-1)}  \overline{\zeta}_1 \right\|_{\mathcal{X}^p}^{p-2}
 +\left\| \rho^{(m_0 p-m)/(p-1)}  \overline{\zeta}_2 \right\|_{\mathcal{X}^p}^{p-2}
 \right)
\\
\lesssim
\left\| \overline{\zeta}_1-\overline{\zeta}_2 \right\|_{\mathcal{Y}}
\left( \left\|  \overline{\zeta}_1 \right\|_{\mathcal{Y}}^{p-2}
 +\left\|  \overline{\zeta}_2 \right\|_{\mathcal{Y}}^{p-2}
 \right).
\end{multline*}
Therefore, using \eqref{14:58} and \eqref{14:58-bis}, we find
\begin{multline}\label{18:15}
\left\| h_1-h_2 \right\|_{L^\infty_{m}(0,T;L^\infty(\Omega))}
\\
\lesssim
\left\| F_1-F_2\right\|_{L^{p'}_{m_1}(0,T;L^{p'}(\Omega))}^{1/(p-1)}
\left( 
\left\| F_1\right\|_{L^{p'}_{m_1}(0,T;L^{p'}(\Omega))}
+\left\| F_2\right\|_{L^{p'}_{m_1}(0,T;L^{p'}(\Omega))}
+\left\| y_0 \right\|_{L^{p'}(\Omega)}
\right)^{(p-2)/(p-1)}.
\end{multline}
Note that $y_1 - y_2$ satisfies the following system
\begin{equation*}
\left\{
\begin{array}{rl}
\partial_t \left(\frac{y_1 - y_2}{\rho^{m}}\right)-  \Delta \left(\frac{y_1 - y_2}{\rho^{m}}\right) 
	=  \frac{h_1-h_2}{\rho^{m}}\chi_{\omega} 
	+ \frac{F_1-F_2}{\rho^{m}} 
-m \frac{\rho'}{\rho^{m+1}} (y_1 - y_2)
&\mathrm{in}\ (0,T)\times\Omega,\\[4mm]
\frac{y}{\rho^{m}}= 0&\mathrm{on}\ (0,T)\times\partial\Omega,\\[3mm]
\frac{y}{\rho^{m}}(0,\cdot)=\frac{y_0}{\rho^{m}(0)}& \mathrm{in}\ \Omega.
\end{array}
\right.
\end{equation*}
Now, we follow the same proof as in \cref{P7} and we use that $m=m_1/N$
satisfies \eqref{20:22}
to deduce from \eqref{18:15} that
\begin{equation}\label{16:59-bis}
\norme{ \frac{y_1-y_2}{\rho^{m_1/N}} }_{\mathcal{X}^{q}} 
\lesssim 
\norme{F_1-F_2}_{L^{q}_{m_1}(0,T;L^{q}(\Omega))} 
\\
+R_0^{(p-2)/(p-1)} \left\| F_1-F_2\right\|_{L^{p'}_{m_1}(0,T;L^{p'}(\Omega))}^{1/(p-1)}.
\end{equation}
We then write
$$
\left| \frac{y_1^N-y_2^N}{\rho^{m_1}} \right|
\lesssim
\frac{\left| y_1-y_2 \right|}{\rho^{m_1/N}}
\frac{\left| y_1\right|^{N-1}+\left| y_2\right|^{N-1}}{\rho^{m_1 (N-1)/N}}
$$
so that from Hölder's inequality, we have
$$
\left\| y_1^N-y_2^N \right\|_{L^q_{m_1}(0,T;L^q(\Omega))}
\lesssim
\left\|  y_1-y_2 \right\|_{L^{qN}_{m_1/N}(0,T;L^{qN}(\Omega))}
\left(
\left\|  y_1\right\|_{L^{qN}_{m_1/N}(0,T;L^{qN}(\Omega))}^{N-1}
+\left\|  y_1\right\|_{L^{qN}_{m_1/N}(0,T;L^{qN}(\Omega))}^{N-1}
\right).
$$
Combining this relation with the Sobolev embedding \eqref{XpLq}, \eqref{16:59-Result}, \eqref{16:59-bis}, we deduce that 
\begin{multline}
\label{eq:proofholdercontinuitynonlinear}
\left\| y_1^N-y_2^N \right\|_{L^q_{m_1}(0,T;L^q(\Omega))}
\lesssim
\left( \norme{F_1-F_2}_{L^{q}_{m_1}(0,T;L^{q}(\Omega))} 
+R_0^{(p-2)/(p-1)} \left\| F_1-F_2\right\|_{L^{p'}_{m_1}(0,T;L^{p'}(\Omega))}^{1/(p-1)}
\right)
\\
\times
\left(
\norme{F_1}_{L^{q}_{m_1}(0,T;L^{q}(\Omega))} 
+\norme{F_2}_{L^{q}_{m_1}(0,T;L^{q}(\Omega))} 
+ \norme{y_0}_{W^{2/q',q}(\Omega)}
\right)^{N-1}
\end{multline}
which implies the continuity of $\mathcal{N}$.
\end{proof}

\begin{Remark}
In the above proof, let us remark that we show that the mapping $\mathcal{N} : B_{R_0} \to B_{R_0}$ is $\alpha$-H\"older continuous with $\alpha = 1/(p-1)$
(see \eqref{eq:proofholdercontinuitynonlinear}). It is not clear if this mapping is Lipschitz continuous or if we can show that for $R_0$ small enough it is contractive.
As a consequence, in the proof of \cref{th:MainresultSmooth}, we do not apply the Banach fixed-point theorem (as it can be done with the method proposed in \cite{LTT13})
and we use instead the Schauder fixed-point theorem.
\end{Remark}

We are now in a position to prove \cref{th:MainresultSmooth}.
\begin{proof}[Proof of \cref{th:MainresultSmooth}]
From \cref{L3}, if $R_0:=\norme{y_0}_{W^{2/q',q}(\Omega)}$ is small enough, then 
the mapping $\mathcal{N} : B_{R_0} \to B_{R_0}$ defined by \eqref{09:58} is continuous, where
$B_{R_0}$ is the closed convex set defined by \eqref{10:00}. 
Moreover, $\mathcal{N}(B_{R_0})$ is relatively compact in $B_{R_0}$
and we can thus apply the Schauder fixed point theorem to deduce the existence of a fixed point $F\in B_{R_0}$. 
Setting $y=\mathcal{M}_2(y_0,F)$ and $h=\mathcal{M}_3(y_0,F)$, we can apply \cref{P1}, \cref{L7} and \cref{P7} and obtain
that $h$ satisfies \eqref{eq:oddcontrol}, that $y$ is the strong solution of \eqref{eq:heatSL} associating with $h$ and $y_0$
and that for any $m$ satisfying \eqref{20:22},
$y/\rho^{m} \in \mathcal X^q$,  $y^N\in L^q_{m_1}(0,T;L^q(\Omega))$,
$h^{1/(2n+1)}\in \mathcal{X}^p$, $h \in L_{m,0}^{\infty}(0,T;L^\infty(\Omega))$ together with the estimates 
\eqref{16:59-Result-2}.
\end{proof}

\section{Proof of \cref{th:Mainresult1}}
\label{sec:ProofMainresults}

The goal of this part is to prove the local null-controllability of \eqref{eq:ReactionDiffusionSL}. 
\begin{proof}[Proof of \Cref{th:Mainresult1}]
As explained in the introduction, the proof is divided into two steps. 
\paragraph{Step 1: control of the first equation in $(0,T/2)$.}
First we apply \cref{Th:WPLinftySL}: there exists $\widetilde \delta>0$ small enough such that if
\begin{equation}
\label{09:27}
\norme{y_{2,0}}_{L^{\infty}(\Omega)}\leq \widetilde\delta, \quad
\norme{g}_{L^\infty((0,T/2)\times \Omega)} \leq \widetilde\delta,
\end{equation}
the system 
\begin{equation}\label{eq:heat-y2}
\left\{
\begin{array}{rl}
\partial_t y_2- \Delta y_2=y_2^{N_3} + g &\mathrm{in}\ (0,T/2)\times\Omega,\\
y_2= 0&\mathrm{on}\ (0,T/2)\times\partial\Omega,\\
y_2(0,\cdot)=y_{2,0} & \mathrm{in}\ \Omega,
\end{array}
\right.
\end{equation}
admits a unique weak solution in the sense of \cref{D1}. 
Now we  apply \cref{th:mainresult2} to
\begin{equation}\label{eq:heat-y1}
\left\{
\begin{array}{rl}
\partial_t y_1- \Delta y_1=y_1^{N_1} + h \chi_{\omega} &\mathrm{in}\ (0,T/2)\times\Omega,\\
y_1= 0&\mathrm{on}\ (0,T/2)\times\partial\Omega,\\
y_1(0,\cdot)=y_{1,0} & \mathrm{in}\ \Omega.
\end{array}
\right.
\end{equation}
There exists $\delta>0$ such that for any $y_{1,0}\in L^\infty(\Omega)$ with
\begin{equation}\label{20:39}
\left\| y_{1,0} \right\|_{L^\infty(\Omega)} \leq \delta,
\end{equation}
there exists a control $h\in L^\infty(0,T/2;L^\infty(\Omega))$ such that $y_1(T/2,\cdot)=0$ and 
$$
\left\| y_1 \right\|_{L^\infty(0,T/2;L^\infty(\Omega))} \lesssim \left\| y_{1,0} \right\|_{L^\infty(\Omega)}.
$$
Assuming \eqref{eq:smalldatasystem} with $\delta>0$ possibly smaller, we have that 
$$
g:=y_1^{N_2} \in L^\infty((0,T)\times \Omega)
$$
satisfies \eqref{09:27} so that we have obtained at this step a control 
$h\in L^\infty(0,T/2;L^\infty(\Omega))$, such that 
\eqref{eq:ReactionDiffusionSL} admits a weak solution $(y_1,y_2)$ in $(0,T/2)$ and $y_1(T/2,\cdot)=0$.
By using \cref{L20}, $y_{2,T/2}:=y_{2}(T/2,\cdot)$ satisfies
$$
\left\| y_{2,T/2} \right\|_{L^{\infty}(\Omega)} \lesssim \delta.
$$ 
\paragraph{Step 2: control of the second equation in $(T/2,T)$ through a fictitious odd control.}
By taking $\delta>0$ possibly smaller, we can apply \cref{th:mainresult2} to
\begin{equation}\label{eq:heat-y2-ter}
\left\{
\begin{array}{rl}
\partial_t y_2- \Delta y_2=H\chi_{\omega} + y_2^{N_3} &\mathrm{in}\ (T/2,T)\times\Omega,\\
y_2= 0&\mathrm{on}\ (T/2,T)\times\partial\Omega,\\
y_2(T/2,\cdot)=y_{2,T/2} & \mathrm{in}\ \Omega.
\end{array}
\right.
\end{equation}
We deduce the existence of a control $H$ such that $y_2(T,\cdot)=0$ and such that
$$
H^{1/N_2}\in L^p(T/2,T;W^{2,p}(\Omega)) \cap W^{1,p}(T/2,T;L^p(\Omega)), \quad
H^{1/N_2}(T/2, \cdot)=H^{1/N_2}(T,\cdot)=0.
$$
We then set, in $(T/2,T)$,
$$
y_1:=\left(H\chi_{\omega}\right)^{1/N_2}, \quad 
h:=\partial_t y_1-\Delta y_1-y_1^{N_1}\in L^p((T/2,T)\times \Omega).
$$
Concatenating $y_1$, $y_2$ and $h$ between the two steps, we can check that $h\in L^p((0,T)\times \Omega)$, 
that $(y_1,y_2)$ is the weak solution of \eqref{eq:ReactionDiffusionSL} and that \eqref{eq:NullStateu1u2} holds.
This concludes the proof of \Cref{th:Mainresult1}.
\end{proof}

\bibliographystyle{alpha}
\bibliography{ref-LocalOdd}

\begin{thebibliography}{AKBGBdT11}

\bibitem[AKBDGB09]{AKBDGB09}
Farid Ammar~Khodja, Assia Benabdallah, Cédric Dupaix, and Manuel
  Gonz\'{a}lez-Burgos.
\newblock A generalization of the {K}alman rank condition for time-dependent
  coupled linear parabolic systems.
\newblock {\em Differ. Equ. Appl.}, 1(3):427--457, 2009.

\bibitem[AKBGBdT11]{AKBGBdT11}
Farid Ammar-Khodja, Assia Benabdallah, Manuel Gonz\'{a}lez-Burgos, and Luz
  de~Teresa.
\newblock Recent results on the controllability of linear coupled parabolic
  problems: a survey.
\newblock {\em Math. Control Relat. Fields}, 1(3):267--306, 2011.

\bibitem[AT02]{AT02}
Sebastian Anita and Daniel Tataru.
\newblock Null controllability for the dissipative semilinear heat equation.
\newblock {\em Appl. Math. Optim.}, 46(2-3):97--105, 2002.
\newblock Special issue dedicated to the memory of Jacques-Louis Lions.

\bibitem[CGR10]{CGR10}
Jean-Michel Coron, Sergio Guerrero, and Lionel Rosier.
\newblock Null controllability of a parabolic system with a cubic coupling
  term.
\newblock {\em SIAM J. Control Optim.}, 48(8):5629--5653, 2010.

\bibitem[DD12]{Demengel}
Fran\c{c}oise Demengel and Gilbert Demengel.
\newblock {\em Functional spaces for the theory of elliptic partial
  differential equations}.
\newblock Universitext. Springer, London; EDP Sciences, Les Ulis, 2012.
\newblock Translated from the 2007 French original by Reinie Ern\'{e}.

\bibitem[dT00]{deT00}
Luz de~Teresa.
\newblock Insensitizing controls for a semilinear heat equation.
\newblock {\em Comm. Partial Differential Equations}, 25(1-2):39--72, 2000.

\bibitem[Eva10]{Eva10}
Lawrence~C. Evans.
\newblock {\em Partial differential equations}, volume~19 of {\em Graduate
  Studies in Mathematics}.
\newblock American Mathematical Society, Providence, RI, second edition, 2010.

\bibitem[FCG06]{FG}
Enrique Fern\'{a}ndez-Cara and Sergio Guerrero.
\newblock Global {C}arleman inequalities for parabolic systems and applications
  to controllability.
\newblock {\em SIAM J. Control Optim.}, 45(4):1399--1446, 2006.

\bibitem[FI96]{FI}
Andrei Fursikov and Oleg Imanuvilov.
\newblock {\em Controllability of evolution equations}, volume~34 of {\em
  Lecture Notes Series}.
\newblock Seoul National University, Research Institute of Mathematics, Global
  Analysis Research Center, Seoul, 1996.

\bibitem[FR71]{MR335014}
Hector~O. Fattorini and David~L. Russell.
\newblock Exact controllability theorems for linear parabolic equations in one
  space dimension.
\newblock {\em Arch. Rational Mech. Anal.}, 43:272--292, 1971.

\bibitem[Fri64]{Fri64}
Avner Friedman.
\newblock {\em Partial differential equations of parabolic type}.
\newblock Prentice-Hall, Inc., Englewood Cliffs, N.J., 1964.

\bibitem[LB19]{KLB}
K\'{e}vin Le~Balc'h.
\newblock Null-controllability of two species reaction-diffusion system with
  nonlinear coupling: a new duality method.
\newblock {\em SIAM J. Control Optim.}, 57(4):2541--2573, 2019.

\bibitem[LR95]{LR95}
Gilles Lebeau and Luc Robbiano.
\newblock Contr\^{o}le exact de l'\'{e}quation de la chaleur.
\newblock {\em Comm. Partial Differential Equations}, 20(1-2):335--356, 1995.

\bibitem[LSU68]{LSU68}
Olga~A. Lady\v{z}enskaja, Vsevolod~A. Solonnikov, and Nina~N. Uralceva.
\newblock {\em Linear and quasilinear equations of parabolic type}.
\newblock Translations of Mathematical Monographs, Vol. 23. American
  Mathematical Society, Providence, R.I., 1968.
\newblock Translated from the Russian by S. Smith.

\bibitem[LTT13]{LTT13}
Yuning Liu, Tak\'{e}o Takahashi, and Marius Tucsnak.
\newblock Single input controllability of a simplified fluid-structure
  interaction model.
\newblock {\em ESAIM Control Optim. Calc. Var.}, 19(1):20--42, 2013.

\bibitem[Sim87]{Sim87}
Jacques Simon.
\newblock Compact sets in the space {$L^p(0,T;B)$}.
\newblock {\em Ann. Mat. Pura Appl. (4)}, 146:65--96, 1987.

\bibitem[TW09]{TucsnakWeiss}
Marius Tucsnak and George Weiss.
\newblock {\em Observation and control for operator semigroups}.
\newblock Birkh\"{a}user Advanced Texts: Basler Lehrb\"{u}cher. [Birkh\"{a}user
  Advanced Texts: Basel Textbooks]. Birkh\"{a}user Verlag, Basel, 2009.

\end{thebibliography}

\end{document}